\newcommand{\email}[1]{\href{mailto:#1}{\texttt{#1}}}
\numberwithin{equation}{section} 
\newtheorem{theorem}{Theorem}[section]
\newtheorem{definition}{Definition}[section]
\newtheorem{lemma}{Lemma}[section]
\newtheorem{remark}{Remark}[section]
\newtheorem{example}{Example}[section]
\newcommand{\bbm}{\begin{bmatrix}}
\newcommand{\ebm}{\end{bmatrix}}
\begin{document}

\title{High-precision randomized  iterative methods for the random feature method}

\author{
  Jingrun Chen
    \thanks{School of Mathematical Sciences and Suzhou Institute for Advanced Research, University of Science and Technology of China, China (\email{jingrunchen@ustc.edu.cn})}
  \and
  Longze Tan 
    \thanks{School of Mathematical Sciences and Suzhou Institute for Advanced Research, University of Science and Technology of China, China (\email{ba23001003@mail.ustc.edu.cn})}
}

\date{}

\maketitle

\begin{abstract}
This paper focuses on solving large-scale, ill-conditioned, and overdetermined sparse least squares problems that arise from numerical partial differential equations (PDEs), mainly from the random feature method. To address these difficulties, we introduce (1) a count sketch technique  to sketch the original matrix to a smaller matrix; (2) a QR factorization or a  singular value decomposition for the smaller matrix to obtain the preconditioner, which is multiplied to the original matrix from the right-hand side; (3) least squares iterative solvers to solve the preconditioned least squares system. Therefore, the methods we develop are termed CSQRP-LSQR and CSSVDP-LSQR. Under mild assumptions, we prove that the preconditioned problem holds a condition number whose upper bound is independent of the condition number of the original matrix, and provide error estimates for both methods. Ample numerical experiments, including least squares problems arising from two-dimensional and three-dimensional PDEs and the Florida Sparse Matrix Collection, are conducted. Both methods are comparable to or even better than direct methods in accuracy and are computationally more efficient for large-scale problems. This opens up the applicability of the random feature method for PDEs over complicated geometries with high-complexity solutions.
\par
Keywords: Random feature method, least-squares problem, ill-conditioned, count sketch matrix, randomized iterative method
\par
MSC codes: 65F08, 65F10, 65F20
\end{abstract}
\section{Introduction}
Numerical partial differential equations (PDEs) are widely used in scientific and engineering applications. Traditional methods, such as the finite difference and finite element methods, are robust in terms of accuracy with convergence guarantee in general. Recently, methods based on deep neural networks (DNNs) have been designed to solve PDEs in high dimensions. These methods are mesh-free and thus can be applied to problems with complex geometries. However, the highly nonlinear and nonconvex optimization problem of DNNs is difficult to solve, and the accuracy is typically around $1\%$; see \cite{EWYu, SirSp, RaPK, WAN} for examples. Therefore, recent efforts have been devoted to design methods that combine the advantages of both types of methods.
\par
One notable method is the random feature method (RFM) \cite{CCEY}. In RFM, the approximation space is defined as the composition of the partition of unity (PoU) and the random feature functions, two-layer neural networks with inner-layer parameters randomly chosen and only the out-layer parameters to be optimized. Therefore, only a convex optimization (linear least squares) problem is solved to get the numerical solution. 
Once the least squares problem is solved with high precision, RFM exhibits the exponential convergence rate as the number of random feature functions increases. RFM has been successfully applied to problems from solid mechanics, fluid mechanics, interface problems, and time-dependent problems over complex geometries \cite{CCEY, CELuo, CCYang}. 

To be specific, the least squares problem in RFM can be formulated as 
\begin{equation}\label{least-squares problem} \underset{\mathbf{x} \in \mathbb{R}^n}{\min}\|\mathbf{A}\mathbf{x}-\mathbf{b}\|_2^2, 
\end{equation}
where $\mathbf{A}\in \mathbb{R}^{m\times n}$ and $\mathbf{b}\in \mathbb{R}^m$. The geometric complexity in real applications leads to $m \gg n$, and typically $m$ is larger than $n$ by a few orders of magnitude. If $\mathbf{A}$ has full column rank, \eqref{least-squares problem} admits a unique solution, which can be obtained by direct methods such as singular value decomposition (SVD) and QR decomposition. The computational complexity is $\mathcal{O}(mn\min(m, n))$. Meanwhile, the computational complexity of iterative methods is $\mathcal{O}(mn)$ per iteration. The total cost depends on the number of iterations required for a given tolerance. An iterative method will be attractive if it can achieve the same level of accuracy with the iteration number significantly less than $\min (m, n)$. In applications, it is observed that direct methods can achieve high accuracy while iterative methods hinder the accuracy due to the ill-conditioned least squares problem in RFM, even if a significant number of iterations is executed \cite{ChenESun}. This motivates the current work.

The broadness of singular value distribution of $\mathbf{A}$ leads to the ill condition. Therefore, classical Krylov subspace iterative methods such as CGLS \cite{Golub, HeSt}, LSQR \cite{PaSa}, and LSMR \cite{FoSa} have very slow convergence rates, making them difficult to find approximate solutions with high precision. One line of work is to find an effective preconditioner, which is challenging unless the coefficient matrix exhibits a specific structure and specific properties \cite{Benzi}. Recently, randomized preconditioning techniques have shown great potential to alleviate the issues mentioned above in direct and iterative methods. For example, in \cite{RoTy}, Rokhlin and Tygert introduced a dimensionality reduction technique for overdetermined systems using a subsampled randomized Fourier transform matrix to the original matrix, got a smaller matrix, and applied the QR decomposition to this matrix to construct a preconditioner for conjugate gradient method. In \cite{AMT}, Avron, Maymounkov, and Toledo  introduced Blendenpik, an algorithm designed to solve dense, highly overdetermined least squares problems. 
In \cite{MSM}, Meng, Saunders, and Mahoney proposed a least squares solver named LSRN, which enables parallel computation of the sketched matrix $\mathbf{G}\mathbf{A}$, where $\mathbf{G}$ is a random normal matrix and then utilizes a compact SVD to construct the preconditioner in LSQR. Another line of work is to incorporate randomization directly to solve linear systems of equations \cite{DMMS, LeveLe, MaNeRa, Needell, StrVer, ZoFer}. These randomized algorithms are often as competitive as, or even superior to, current deterministic methods. For example, in \cite{StrVer}, Strohmer and Vershynin first introduced the randomized Kaczmarz (RK) method for solving linear systems. In \cite{BaiWu_4}, Bai and Wu introduced the greedy randomized Kaczmarz (GRK) method, which uses an effective greedy probability criterion to select a subset of rows in the coefficient matrix row in each iteration. Later, this greedy approach was used to select the working columns in the randomized coordinate descent method \cite{LeveLe,MaNeRa}, leading to the development of the greedy randomized coordinate descent (GRCD) method for large linear least squares problems \cite{BaiWu_5, ZhangGuo}.
For the least squares problems in RFM, our numerical experiments indicate that both lines of work, such as GRK and GRCD methods, Blendenpik, LSRN, and their variants, do not work well, and the approximation accuracy is limited.

In this work, we combine the randomization and preconditioning techniques to design randomized iterative methods for linear least squares problems.
Our methods consist of three main components: (1) a count sketch technique \cite{ChChFC, ClWo, Woodruff} is used to sketch the original matrix to a smaller matrix; (2) a QR factorization or a singular value decomposition is employed for the smaller matrix to obtain the preconditioner, which is multiplied to the original matrix from the right-hand side; (3) least squares iterative solvers are employed to solve the preconditioned least squares system. Therefore, the methods we develop are termed CSQRP-LSQR and CSSVDP-LSQR. In the framework of $\ell_2$ subspace embedding, we prove that the preconditioned problem holds a condition number whose upper bound is independent of the condition number of the original matrix, and provide error estimates for both methods. The design philosophy of our methods differs from those of Blendenpik and LSRN since we explicitly construct the preconditioned system which becomes a well-conditioned least squares problem. At first glance, this explicit construction leads to a comparable computational complexity between the CSQRP-LSQR and CSSVDP-LSQR methods and direct methods based on QR decomposition. However, the CSQRP-LSQR method is computationally faster on large-scale problems compared to the state-of-the-art sparse direct solver SPQR \cite{Davis_SPQR}. Meanwhile, the relative least squares error precision of CSQRP-LSQR and CSSVDP-LSQR methods is on par with SPQR and Householder QR \cite{Davis}. 

The paper is organized as follows. Section \ref{section:2} first outlines the random feature method for completeness, and introduces the concept of $\ell_2$-subspace embedding. The CSQRP-LSQR and CSSVDP-LSQR methods, along with their error estimates, are detailed in Section \ref{section:3}. Section \ref{section:4} provides numerous numerical examples to validate the effectiveness of the proposed methods. Conclusions are drawn in Section \ref{section:5}. 
Additional information, including numerical results for PDEs over complex geometries without explicit solutions, the setup of hyper-parameters in RFM, and 
matrix details from  the Florida Sparse Matrix Collection, are provided in 
Appendix \ref{my_experiments_without_explicit_solutions},  
Appendix \ref{my_hyperparameters_RFM}, and Appendix \ref{test_matrices_information}, respectively.

\section{Preliminaries}\label{section:2}
For completeness, in this section, we introduce the RFM for solving PDEs and the concept of $\ell_2$-subspace embedding.

\subsection{The random feature method}\label{subsection:2.1}
Consider the following boundary value problem:
\begin{equation}\label{boundary-value problem}
\begin{cases}\mathcal{L}u(\mathbf{x})=f(\mathbf{x}), & \mathbf{x} \in \Omega, \\ \mathcal{B} u(\mathbf{x})=g(\mathbf{x}), & \mathbf{x} \in \partial \Omega,\end{cases}
\end{equation}
where $f$ and $g$ are known functions, $\mathcal{L}$ and $\mathcal{B}$ are differential and boundary operators, respectively, and $\Omega$ is bounded and connected domain in $\mathbb{R}^d$. 

In RFM, the approximate solution $u_n(\mathbf{x})$ is expressed as a linear combination of $n$ random feature functions defined on $\Omega$, i.e., 
\begin{equation}\label{RFM_approximation_solution}
  u_n(\mathbf{x})=\sum_{j=1}^n u_j \phi_j(\mathbf{x}).
\end{equation}
To accommodate the local variations in the solutions of PDEs, we combine the partition of unity (PoU) functions and the random feature functions. To construct the PoU, the usual approach is to (uniformly) divide the domain $\Omega$ into $M_p$ small subdomains, denoted as $\left\{\Omega_i\right\}_{i=1}^{M_p}$. For each subdomain $\Omega_i$,  $\mathbf{r}_i=(r_{i1}, r_{i2},\ldots,r_{id})^T$ is preselected, the center point $\mathbf{x}_i$ is selected, and define the normalized coordinate
$$
\mathbf{l}_i(\mathbf{x})=\left(\frac{x_1-x_{i1}}{r_{i1}}, \frac{x_2-x_{i2}}{r_{i2}}, \ldots, \frac{x_d-x_{id}}{r_{id}}\right)^{\top}, \quad i=1, \ldots, M_p.
$$
This linear transformation $\mathbf{l}_i(\mathbf{x})$ maps $\left[x_{i1}-r_{i1}, x_{i1}+r_{i1}\right] \times \cdots \times\left[x_{id}-r_{id}, x_{id}+r_{id}\right]$ onto $[-1,1]^d$. Thus, a PoU function centered at $\mathbf{x}_i$ can be constructed. When $d=1$, two commonly used PoU functions are 
$$
\begin{aligned}
\psi_i^a(x) & =\mathbb{I}_{[-1,1]}\left(l_i(x)\right),\\
\psi_i^b(x) & =\mathbb{I}_{\left[-\frac{5}{4},-\frac{3}{4}\right]}\left(l_i(x)\right) \frac{1+\sin \left(2 \pi l_i(x)\right)}{2}+\mathbb{I}_{\left[-\frac{3}{4}, \frac{3}{4}\right]}\left(l_i(x)\right)+\mathbb{I}_{\left[\frac{3}{4}, \frac{5}{4}\right]}\left(l_i(x)\right) \frac{1-\sin \left(2 \pi l_i(x)\right)}{2}.
\end{aligned}
$$
$\psi_i^a(x)$ is discontinuous with a smaller support, while $\psi_i^b(x)$ is continuously differentiable with a larger support. In high-dimensional,  PoU function $\psi_i(\mathbf{x})$ can be obtained directly from the tensor product, namely $\psi_i(\mathbf{x})=\prod_{k=1}^d \psi_i\left(x_k\right)$.
\par
Next, $J_i$ random feature functions on each subdomain $\Omega_i$ are constructed as follows
$$
\phi_{ij}(\mathbf{x})=\sigma\left(\mathbf{k}_{ij} \cdot \mathbf{l}_i(\mathbf{x})+b_{ij}\right), \quad i=1, \ldots, J_i,
$$
where the nonlinear activation function $\sigma$ is often chosen as tanh or trigonometric functions and each component of $\mathbf{k}_{ij}$ and $b_{ij}$ is chosen uniformly from the interval $\left[-R_{ij}, R_{ij}\right]$ and is fixed. Thus, the approximate solution $u_n(\mathbf{x})$ can be represented as 
$$
u_n(\mathbf{x})=\sum_{i=1}^{M_p} \psi_i(\mathbf{x}) \sum_{j=1}^{J_i} u_{ij} \phi_{ij}(\mathbf{x}),
$$
where $u_{ij}\;(i=1,\ldots,M_p, j=1,\ldots, J_i)$ are the unknown coefficients that need to be determined, and $n = M_p \cdot J_i$ denotes the degrees of freedom. A loss function for \eqref{boundary-value problem} can be formulated as follows
\begin{equation}\label{loss_function}
  Loss=\sum_{\mathbf{x}_i \in C_I} \sum_{k=1}^{K_I} \lambda_{I i}^k\|\mathcal{L}^k u_M\left(\mathbf{x}_i\right)-f^k\left(\mathbf{x}_i\right)\|_{l^2}^2+\sum_{\mathbf{x}_j \in C_B} \sum_{\ell=1}^{K_B} \lambda_{B j}^{\ell}\|\mathcal{B}^{\ell} u_M\left(\mathbf{x}_j\right)-g^{\ell}\left(\mathbf{x}_j\right)\|_{l^2}^2. 
\end{equation}
Here $ \lambda_{I i}^k$ and $\lambda_{B j}^{\ell}$ are penalty parameters, as defined in \cite{CCEY} and $\left\{\mathbf{x}_i\right\} \subset \Omega$ and $\left\{\mathbf{x}_j\right\} \subset \partial \Omega$ are collocation points. The total number of collocation points equals $m$, leading to the least squares problem \eqref{least-squares problem}. It is worth noting that, when $M_p>1$ and PoU function $\psi^a(\mathbf{x})$ is used, smoothness conditions between the adjacent elements in the partition are explicitly imposed by adding regularization terms in loss function \eqref{loss_function} while no regularization is required when $\psi^b(\mathbf{x})$ is used for second-order equations due to its first-order continuity. Since only the coefficients $\left\{u_{ij}\right\}$ need to be solved in \eqref{loss_function}. Therefore, minimizing \eqref{loss_function} is a convex optimization problem, which can be equivalently reformulated as a linear least squares problem.

\subsection{The $\ell_2$-subspace embedding} \label{Subsection:2.2}
We first define an $\ell_2$-subspace embedding  for the column space of an $m \times n$ matrix $\mathbf{A}$.
\begin{definition}(\cite{Woodruff})\label{subspace_def}
  A $(1 \pm \epsilon) \ell_2$-subspace embedding for the column space of an $m \times n$ matrix $\mathbf{A}$ is a matrix $\mathbf{S} \in \mathbb{R}^{s \times m}$ for which for all $\mathbf{x} \in \mathbb{R}^n$
  \begin{equation}\label{L2_subspaace embedding}
    (1-\epsilon)\|\mathbf{A}\mathbf{x}\|_2^2 \leq\|\mathbf{S}\mathbf{A}\mathbf{x}\|_2^2 \leq(1+\epsilon)\|\mathbf{A}\mathbf{x}\|_2^2.
  \end{equation}
\end{definition} 
Subspace embeddings primarily aim to reduce the row count of matrix $\mathbf{S}$ and accelerate the computation of  sketched matrix $\mathbf{S} \mathbf{A}$, a common bottleneck in applications \cite{Woodruff}. Among various methods, the oblivious $\ell_2$-subspace embedding is notably effective. A precise definition is provided below.
\begin{definition}(\cite{Woodruff})
  Suppose $\prod$ is a distribution on $s \times m$ matrix $\mathbf{S}$, where $s$ is a function of $m, n, \epsilon$, and $\delta$.  Suppose that with probability at least $1-\delta$, for any fixed $m \times n$ matrix $\mathbf{A}$, a matrix $\mathbf{S}$ drawn from distribution $\prod$  has the property that $\mathbf{S}$ is a $(1 \pm \epsilon) \ell_2$-subspace embedding for $\mathbf{A}$. Then $\mathbf{S}$ is called an $(\epsilon, \delta)$ oblivious $\ell_2$-subspace embedding.
\end{definition}
An oblivious $\ell_2$-subspace embedding is a random subspace embedding matrix following a certain distribution. For simplicity, the term "oblivious" may be omitted. There are several methods to construct the sketching matrix, including the Gaussian random projection (GRP) \cite{HMT, MSM}, the subsampled randomized Hadamard transform (SRHT) \cite{LDFU, Tro, Wang, Woodruff}, and the count sketch transform \cite{ChChFC,ClWo, Woodruff}. A comparison of these methods is provided in Table \ref{tab:sketching_matrices_complexities}. Here $\mathrm{nnz}(\cdot)$ denotes the number of non-zero elements in the matrix. The $\mathcal{O}(\mathrm{nnz}(\mathbf{A}))$ complexity is indeed optimal for computing the sketched matrix, and thus we choose the count sketch method. It is important to note that there is no need to form $\mathbf{S}$ explicitly; see \cite{Wang} for the implementation details. Furthermore, in numerical experiments, we are surprised that a count sketch matrix $s=\gamma n, \gamma=3$ already yields excellent preconditioning effects.
\begin{table}[!htbp]
  \begin{center}
  \caption{Comparison of sketching methods.}\label{tab:sketching_matrices_complexities}%
  \setlength{\tabcolsep}{6.50mm}  %
  \begin{tabular}{lllll}
  \toprule
  Sketching method & GRP & SRHT &  count sketch  \\
  \midrule
  $s$ & $\mathcal{O}\left(\frac{n}{\epsilon^2}\right)$ & $\mathcal{O}\left(\frac{log(n)(\sqrt{n}+\sqrt{log(m)})^2}{\epsilon^2}\right)$ & $\mathcal{O}\left(\frac{(n^2+n)}{\epsilon^2}\right)$ \\
  Complexity & $\mathcal{O}(mns)$ & $\mathcal{O}\left(mnlog(s)\right)$ & $\mathcal{O}(\mathrm{nnz}(\mathbf{A}))$ \\
  \bottomrule
  \end{tabular}
  \end{center}
\end{table}

Next, we define the count sketch transform or sparse embedding matrix \cite{ChChFC,ClWo, Woodruff}. 
\begin{definition}\label{count sketch transform or sparse embedding matrix}
  (Count Sketch transform) A count sketch transform is defined to be $\mathbf{S}=\Phi \mathbf{D} \in$ $\mathbb{R}^{s \times m}$. Here, $\mathbf{D}$ is an $m \times m$ random diagonal matrix with each diagonal entry independently chosen to be +1 or -1 with equal probability, and $\Phi \in\{0,1\}^{s \times m}$ is a $s \times m$ binary matrix with $\Phi_{h(i), i}=1$ and all remaining entries 0 , where $h:[m] \rightarrow[s]$ is a random map such that for each $i \in[m], h(i)=j$ with probability $1 / s$ for each $j \in[s]$.
\end{definition}

An explicit example of the count sketch matrix when $s=4, m=5$ is 
$$
\mathbf{S} = \left(\begin{array}{ccccc}
0 & 0 & -1 & 1 & 0 \\
1 & 0 & 0 & 0 & 0 \\
0 & 0 & 0 & 0 & 1 \\
0 & -1 & 0 & 0 & 0
\end{array}\right),
$$
which only has one non-zero entry per column. Thus the computation of $\mathbf{SA}$ is $\mathcal{O}(\mathrm{nnz}(\mathbf{A}))$.

To end this section, we state a lemma that is crucial for the error estimates of the proposed methods.
\begin{lemma}(\cite{MM})\label{count sketch lemma}
  Suppose  that $\mathbf{S} \in \mathrm{R}^{s \times m}$ is a count sketch transform with $s=\left(n^2+n\right) /\left(\delta \varepsilon^2\right)$, where $0<\delta, \epsilon<1$. Then with probability at least $1-\delta$, we have that the inequality \eqref{L2_subspaace embedding} holds.
\end{lemma}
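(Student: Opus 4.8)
The plan is to reduce the claim to a statement about an orthonormal basis of $\mathrm{col}(\mathbf{A})$ and then bound a single second moment. First I would observe that the inequality \eqref{L2_subspaace embedding} depends only on the column space of $\mathbf{A}$: if $\mathbf{U}\in\mathbb{R}^{m\times r}$ is any matrix whose columns form an orthonormal basis of that space (so $\mathbf{U}^T\mathbf{U}=\mathbf{I}_r$ with $r=\mathrm{rank}(\mathbf{A})\le n$), then $\{\mathbf{A}\mathbf{x}:\mathbf{x}\in\mathbb{R}^n\}=\{\mathbf{U}\mathbf{z}:\mathbf{z}\in\mathbb{R}^r\}$ and $\|\mathbf{U}\mathbf{z}\|_2=\|\mathbf{z}\|_2$, so \eqref{L2_subspaace embedding} for all $\mathbf{x}$ is equivalent to $(1-\epsilon)\|\mathbf{z}\|_2^2\le\|\mathbf{S}\mathbf{U}\mathbf{z}\|_2^2\le(1+\epsilon)\|\mathbf{z}\|_2^2$ for all $\mathbf{z}\in\mathbb{R}^r$. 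Since $\mathbf{S}\mathbf{U}$ is symmetrized by this quadratic form, the latter is exactly $\|(\mathbf{S}\mathbf{U})^T(\mathbf{S}\mathbf{U})-\mathbf{I}_r\|_2\le\epsilon$, and because the spectral norm is dominated by the Frobenius norm, it suffices to prove that $\|\mathbf{E}\|_F\le\epsilon$ with probability at least $1-\delta$, where $\mathbf{E}:=(\mathbf{S}\mathbf{U})^T(\mathbf{S}\mathbf{U})-\mathbf{I}_r$.

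Next I would bound $\mathbb{E}[\|\mathbf{E}\|_F^2]$ by expanding $\mathbf{E}$ entrywise through the explicit form $\mathbf{S}=\Phi\mathbf{D}$ of Definition \ref{count sketch transform or sparse embedding matrix}. Write $\delta_1,\dots,\delta_m\in\{\pm1\}$ for the independent symmetric diagonal entries of $\mathbf{D}$ and $h:[m]\to[s]$ for the hash with $\Pr[h(i)=j]=1/s$. A direct computation gives $\big((\mathbf{S}\mathbf{U})^T(\mathbf{S}\mathbf{U})\big)_{ll'}=\sum_{i,i':\,h(i)=h(i')}\delta_i\delta_{i'}\mathbf{U}_{il}\mathbf{U}_{i'l'}$; the diagonal part $i=i'$ equals exactly $(\mathbf{U}^T\mathbf{U})_{ll'}=(\mathbf{I}_r)_{ll'}$ and is cancelled, leaving $\mathbf{E}_{ll'}=\sum_{i\ne i'}\mathbb{I}[h(i)=h(i')]\,\delta_i\delta_{i'}\mathbf{U}_{il}\mathbf{U}_{i'l'}$. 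Squaring and taking the expectation over $\mathbf{D}$ and $h$, the sign variables kill every term whose index quadruple does not pair up, i.e.\ every term with $\{i,i'\}\ne\{k,k'\}$, leaving two families of surviving terms, each carrying a factor $\Pr[h(i)=h(i')]=1/s$. Summing over $l,l'$ and using $\mathbf{U}^T\mathbf{U}=\mathbf{I}_r$ — so that $\sum_i\sum_l\mathbf{U}_{il}^2=r$ and $\sum_{i,i'}\big(\sum_l\mathbf{U}_{il}\mathbf{U}_{i'l}\big)^2=\|\mathbf{U}\mathbf{U}^T\|_F^2=\mathrm{tr}(\mathbf{U}\mathbf{U}^T\mathbf{U}\mathbf{U}^T)=\mathrm{tr}(\mathbf{U}\mathbf{U}^T)=r$ — one obtains $\mathbb{E}[\|\mathbf{E}\|_F^2]\le (r^2+r)/s\le (n^2+n)/s$.

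Finally I would close with Markov's inequality: with the choice $s=(n^2+n)/(\delta\epsilon^2)$,
\[
\Pr\big[\|\mathbf{E}\|_F^2\ge\epsilon^2\big]\le\frac{\mathbb{E}[\|\mathbf{E}\|_F^2]}{\epsilon^2}\le\frac{n^2+n}{s\,\epsilon^2}=\delta,
\]
so with probability at least $1-\delta$ we have $\|\mathbf{E}\|_F\le\epsilon$, hence $\|(\mathbf{S}\mathbf{U})^T(\mathbf{S}\mathbf{U})-\mathbf{I}_r\|_2\le\epsilon$, which by the first paragraph is precisely \eqref{L2_subspaace embedding}.

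I expect the main obstacle to be the bookkeeping in the second-moment step: one must carefully enumerate which index quadruples $(i,i',k,k')$ survive the sign expectation (only those with $\{i,i'\}=\{k,k'\}$, without double-counting the case $i=k'$, $i'=k$), split the two resulting double sums correctly, and recognize the cross term as $\sum_{i\ne i'}\big[(\mathbf{U}\mathbf{U}^T)_{ii'}\big]^2\le\|\mathbf{U}\mathbf{U}^T\|_F^2=r$ rather than bounding it crudely; the reduction to orthonormal columns and the concluding Markov bound are routine.
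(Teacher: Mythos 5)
Your proposal is correct, but note that the paper itself does not prove this lemma at all: it is imported verbatim as a citation to \cite{MM}, and the surrounding text simply uses the conclusion. What you have written is a self-contained proof along the standard second-moment route used in that cited literature (Meng--Mahoney/Clarkson--Woodruff style): reduce to an orthonormal basis $\mathbf{U}$ of $range(\mathbf{A})$, observe that the two-sided bound \eqref{L2_subspaace embedding} is equivalent to $\|(\mathbf{S}\mathbf{U})^T(\mathbf{S}\mathbf{U})-\mathbf{I}_r\|_2\le\epsilon$, dominate the spectral norm by the Frobenius norm, compute $\mathbb{E}\|\mathbf{E}\|_F^2\le (r^2+r)/s\le (n^2+n)/s$ by expanding over the sign variables and the hash collisions, and finish with Markov. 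Your bookkeeping is right: after the sign expectation only quadruples with $\{i,i'\}=\{k,k'\}$ survive, each contributing the collision probability $1/s$, the ``diagonal'' family sums to at most $\bigl(\sum_{i,l}\mathbf{U}_{il}^2\bigr)^2=r^2$, and the cross family is $\sum_{i\neq i'}\bigl[(\mathbf{U}\mathbf{U}^T)_{ii'}\bigr]^2\le\operatorname{tr}\bigl((\mathbf{U}\mathbf{U}^T)^2\bigr)=r$ because $\mathbf{U}\mathbf{U}^T$ is an orthogonal projection; with $s=(n^2+n)/(\delta\epsilon^2)$ Markov gives failure probability at most $\delta$. So your argument supplies exactly what the paper leaves to the reference, at the same sample-size bound.

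Two small points worth making explicit if you write this up. First, the computation $\Pr[h(i)=h(i')]=1/s$ for $i\neq i'$ uses (at least pairwise) independence of the hash values, which Definition \ref{count sketch transform or sparse embedding matrix} states only implicitly through ``random map''; you should say you read the $h(i)$ as independent (or $2$-wise independent, which suffices for this moment calculation), and likewise that $\mathbf{D}$ is independent of $h$. Second, the equivalence in your first paragraph deserves one line of justification: for the symmetric matrix $\mathbf{M}=(\mathbf{S}\mathbf{U})^T(\mathbf{S}\mathbf{U})-\mathbf{I}_r$, the condition $|\mathbf{z}^T\mathbf{M}\mathbf{z}|\le\epsilon\|\mathbf{z}\|_2^2$ for all $\mathbf{z}$ is the same as $\|\mathbf{M}\|_2\le\epsilon$ by the variational characterization of the extreme eigenvalues. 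With those two remarks added, the proof is complete.
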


\section{The proposed methods}\label{section:3}
In this section, we propose two methods to solve linear least square problems. They all use a count sketch matrix  to sketch the original matrix to a smaller matrix and the LSQR method to solve the preconditioned problem. The only difference is how to get the preconditioner: one uses the QR decomposition for the smaller matrix and the other uses the SVD decomposition. Therefore, these two methods are termed the CSQRP-LSQR method and the CSSVDP-LSQR method, respectively. 

The algorithmic details of the CSQRP-LSQR method are given below.
\par
\begin{algorithm}[H]
  \caption{The CSQRP-LSQR method}\label{Alg:CSQR-PLSQR}
  \KwIn {$\mathbf{A}\in \mathbb{R}^{m\times n},\mathbf{b}\in \mathbb{R}^n$ and $\mathbf{y}^{(0)}\in \mathbb{R}^n$ }
	\KwOut {Approximate solution {$\mathbf{x}^{(k)}\in \mathbb{R}^n$}}
  Choose an oversampling factor $\gamma>1$ and set $s=\gamma n$ ($s<m$).\\
  Generate a $s$-by-$m$ count sketch matrix $\mathbf{S}$ and compute $\tilde{\mathbf{A}}=\mathbf{S}\mathbf{A}$.\\
  Apply the QR factorization to $\tilde{\mathbf{A}}$ and obtain the upper triangular matrix $\mathbf{R}$.\\
  Compute the preconditioned matrix $\mathbf{B}=\mathbf{A}\mathbf{R}^{-1}$.\\
  Use the LSQR method to find an approximate solution $\mathbf{y}^{(k)}$ of 
  \begin{equation}\label{pre_least_squares}
    \underset{\mathbf{y}\in \mathbb{R}^n}{\min}\|\mathbf{B}\mathbf{y}-\mathbf{b}\|_2.
  \end{equation}\\
  Return $\mathbf{x}^{(k)}=\mathbf{R}^{-1}\mathbf{y}^{(k)}$.
\end{algorithm}

\begin{remark}
  In \cite{Wang}, it is suggested that a more natural guess for the initial point $\mathbf{y}^{(0)}$ is readily available, namely, $\mathbf{y}^{(0)}=\mathbf{Q}^T(\mathbf{S} \mathbf{b})$. 
\end{remark}

The $(1 \pm \epsilon) \ell_2$-subspace embedding count sketch matrix of the column space of the matrix $\mathbf{A}$  is presupposed to satisfy the conditions described in Lemma \ref{count sketch lemma}, ensuring that inequality \eqref{L2_subspaace embedding}  holds with at least $1-\delta$ probability. The following Theorem \ref{theorem:3.1} plays a crucial role in analyzing the error estimate of the CSQRP-LSQR method and motivates the design of the CSSVDP-LSQR method.
\begin{theorem}\label{theorem:3.1}
  For a given $\epsilon \in(0,1)$, if the count sketch matrix $\mathbf{S}$ is an $\ell_2$-subspace embedding for a given matrix $\mathbf{A}$ and also satisfies the conditions described in Lemma \ref{count sketch lemma}, then $rank(\mathbf{A})=rank(\mathbf{S}\mathbf{A})$  holds with a probability of at least $1-\delta$, where $rank(\cdot)$ denotes the rank of  the argument matrix.
\end{theorem}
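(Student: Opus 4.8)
The plan is to reduce the rank statement to an equality of null spaces and then apply the rank--nullity theorem. Throughout I condition on the event that $\mathbf{S}$ is a $(1\pm\epsilon)\,\ell_2$-subspace embedding for $\mathbf{A}$; by Lemma \ref{count sketch lemma} and the hypothesis $s=(n^2+n)/(\delta\varepsilon^2)$, this event has probability at least $1-\delta$, and on it the two-sided inequality \eqref{L2_subspaace embedding} holds simultaneously for every $\mathbf{x}\in\mathbb{R}^n$. Denote by $\mathcal{N}(\cdot)$ the null space of a matrix, a linear subspace of $\mathbb{R}^n$ in both cases here.

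First I would establish $\mathcal{N}(\mathbf{A})\subseteq\mathcal{N}(\mathbf{S}\mathbf{A})$ using the right-hand inequality of \eqref{L2_subspaace embedding}: if $\mathbf{A}\mathbf{x}=\mathbf{0}$, then $\|\mathbf{S}\mathbf{A}\mathbf{x}\|_2^2\le(1+\epsilon)\|\mathbf{A}\mathbf{x}\|_2^2=0$, so $\mathbf{S}\mathbf{A}\mathbf{x}=\mathbf{0}$. Next I would establish the reverse inclusion $\mathcal{N}(\mathbf{S}\mathbf{A})\subseteq\mathcal{N}(\mathbf{A})$ using the left-hand inequality together with the fact that $1-\epsilon>0$ because $\epsilon\in(0,1)$: if $\mathbf{S}\mathbf{A}\mathbf{x}=\mathbf{0}$, then $(1-\epsilon)\|\mathbf{A}\mathbf{x}\|_2^2\le\|\mathbf{S}\mathbf{A}\mathbf{x}\|_2^2=0$, which forces $\|\mathbf{A}\mathbf{x}\|_2^2\le 0$ and hence $\mathbf{A}\mathbf{x}=\mathbf{0}$. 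Combining the two inclusions gives $\mathcal{N}(\mathbf{A})=\mathcal{N}(\mathbf{S}\mathbf{A})$, so in particular $\dim\mathcal{N}(\mathbf{A})=\dim\mathcal{N}(\mathbf{S}\mathbf{A})$.

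Finally I would apply the rank--nullity theorem to the two matrices, both of which act on $\mathbb{R}^n$: $rank(\mathbf{A})=n-\dim\mathcal{N}(\mathbf{A})=n-\dim\mathcal{N}(\mathbf{S}\mathbf{A})=rank(\mathbf{S}\mathbf{A})$. Since the entire argument is carried out on the embedding event, the claimed equality $rank(\mathbf{A})=rank(\mathbf{S}\mathbf{A})$ holds with probability at least $1-\delta$, which is exactly the assertion of Theorem \ref{theorem:3.1}.

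There is no serious obstacle here: the argument is essentially a one-line consequence of \eqref{L2_subspaace embedding}. The only points requiring a little care are (i) that the subspace-embedding inequality in Definition \ref{subspace_def} is a uniform statement over all $\mathbf{x}\in\mathbb{R}^n$, so the null-space inclusions are genuinely valid rather than holding only for a fixed vector; and (ii) that it is precisely the strict bound $\epsilon<1$ that makes the lower inequality informative enough to recover $\mathbf{A}\mathbf{x}=\mathbf{0}$ from $\mathbf{S}\mathbf{A}\mathbf{x}=\mathbf{0}$. Both are already guaranteed by the standing hypotheses.
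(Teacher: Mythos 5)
Your proposal is correct and follows essentially the same route as the paper: identify the null spaces of $\mathbf{A}$ and $\mathbf{S}\mathbf{A}$ via the two-sided embedding inequality \eqref{L2_subspaace embedding} and then invoke the rank--nullity theorem, the only cosmetic difference being that you prove the reverse inclusion $\mathcal{N}(\mathbf{S}\mathbf{A})\subseteq\mathcal{N}(\mathbf{A})$ directly while the paper rules out a strict dimension gap by contradiction. No gaps; the argument is sound.
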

\begin{proof}
  By applying the rank-nullity theorem, we immediately conclude that 
  \begin{equation}\label{rank-nullity_equation}
    rank(\mathbf{A})+dim(ker(\mathbf{A}))=rank(\mathbf{S}\mathbf{A})+dim(ker(\mathbf{S}\mathbf{A}))=n.
  \end{equation}
  We have
  $$
  dim(ker(\mathbf{A}))\leq dim(ker(\mathbf{S}\mathbf{A})).
  $$
  If the aforementioned inequality strictly holds, i.e., $dim(ker(\mathbf{A}))< dim(ker(\mathbf{S}\mathbf{A}))$, then there must exist at least one vector $\mathbf{x}\in \mathbb{R}^n$ such that $\|\mathbf{S}\mathbf{A}\mathbf{x}\|_2=0$ but $\|\mathbf{A}\mathbf{x}\|_2\neq 0$. Hence, by Lemma \ref{count sketch lemma}, this contradicts with the inequality \eqref{L2_subspaace embedding}, thereby $ dim(ker(\mathbf{A}))=dim(ker(\mathbf{S}\mathbf{A}))$, and consequently $rank(\mathbf{A})=rank(\mathbf{S}\mathbf{A})$.
\end{proof}
\begin{theorem}\label{theorem:3.2}
  Let the matrix $\mathbf{A}$ in \eqref{least-squares problem} be of full column rank and the construction of the count sketch matrix $\mathbf{S}$ satisfy the condition in Lemma \ref{count sketch lemma}, setting $\tilde{\mathbf{A}}=\mathbf{S}\mathbf{A}$. Then, the following conclusions hold with at least a $1-\delta$ probability:
  \begin{itemize}
    \item The upper triangular matrix $\mathbf{R}$ obtained from the $\mathrm{QR}$ decomposition of $\tilde{\mathbf{A}}$ is an invertible matrix. 
    \item The condition number of the preconditioned matrix $\mathbf{A}\mathbf{R}^{-1}$ satisfies $\kappa(\mathbf{A}\mathbf{R}^{-1})\leq \sqrt{\frac{1+\epsilon}{1-\epsilon}}$.
    \item The preconditioned least squares problem \eqref{pre_least_squares} has a unique solution.
  \end{itemize}
\end{theorem}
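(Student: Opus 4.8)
The plan is to chain together Theorem~\ref{theorem:3.1}, the QR structure, and the $\ell_2$-subspace embedding inequality \eqref{L2_subspaace embedding}, working throughout on the single event of probability at least $1-\delta$ on which \eqref{L2_subspaace embedding} holds; all three bullets then hold simultaneously on that event. First I would establish invertibility of $\mathbf{R}$. Since $\mathbf{A}$ has full column rank $n$ and $s=\gamma n>n$, Theorem~\ref{theorem:3.1} gives $\mathrm{rank}(\tilde{\mathbf{A}})=\mathrm{rank}(\mathbf{S}\mathbf{A})=\mathrm{rank}(\mathbf{A})=n$, so $\tilde{\mathbf{A}}\in\mathbb{R}^{s\times n}$ has full column rank. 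Writing the (thin) QR factorization $\tilde{\mathbf{A}}=\mathbf{Q}\mathbf{R}$ with $\mathbf{Q}\in\mathbb{R}^{s\times n}$ having orthonormal columns and $\mathbf{R}\in\mathbb{R}^{n\times n}$ upper triangular, full column rank of $\tilde{\mathbf{A}}$ forces all diagonal entries of $\mathbf{R}$ to be nonzero, hence $\mathbf{R}$ is invertible, which settles the first bullet.

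Next I would prove the condition-number bound, which is the heart of the argument. The key observation is that for every $\mathbf{x}\in\mathbb{R}^n$ one has $\|\mathbf{S}\mathbf{A}\mathbf{x}\|_2=\|\mathbf{Q}\mathbf{R}\mathbf{x}\|_2=\|\mathbf{R}\mathbf{x}\|_2$, since $\mathbf{Q}$ has orthonormal columns. Substituting $\mathbf{x}=\mathbf{R}^{-1}\mathbf{y}$ (legitimate since $\mathbf{R}$ is invertible, so $\mathbf{y}$ ranges over all of $\mathbb{R}^n$) into \eqref{L2_subspaace embedding} yields
\begin{equation*}
  (1-\epsilon)\,\|\mathbf{A}\mathbf{R}^{-1}\mathbf{y}\|_2^2 \;\leq\; \|\mathbf{y}\|_2^2 \;\leq\; (1+\epsilon)\,\|\mathbf{A}\mathbf{R}^{-1}\mathbf{y}\|_2^2 ,
\end{equation*}
equivalently $\frac{1}{1+\epsilon}\|\mathbf{y}\|_2^2 \leq \|\mathbf{B}\mathbf{y}\|_2^2 \leq \frac{1}{1-\epsilon}\|\mathbf{y}\|_2^2$ for $\mathbf{B}=\mathbf{A}\mathbf{R}^{-1}$. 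By the Courant--Fischer characterization of singular values this gives $\sigma_{\min}(\mathbf{B})^2\geq \frac{1}{1+\epsilon}$ and $\sigma_{\max}(\mathbf{B})^2\leq \frac{1}{1-\epsilon}$, whence
\begin{equation*}
  \kappa(\mathbf{B})=\frac{\sigma_{\max}(\mathbf{B})}{\sigma_{\min}(\mathbf{B})}\;\leq\;\sqrt{\frac{1+\epsilon}{1-\epsilon}} .
\end{equation*}

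Finally, for the third bullet, the bound $\sigma_{\min}(\mathbf{B})\geq (1+\epsilon)^{-1/2}>0$ shows $\mathbf{B}$ has full column rank $n$; consequently the normal equations $\mathbf{B}^{\top}\mathbf{B}\mathbf{y}=\mathbf{B}^{\top}\mathbf{b}$ have the invertible Gram matrix $\mathbf{B}^{\top}\mathbf{B}$, so \eqref{pre_least_squares} admits a unique minimizer. I do not expect a genuine obstacle here; the only points requiring care are (i) making sure the substitution $\mathbf{x}=\mathbf{R}^{-1}\mathbf{y}$ is used to transfer the embedding inequality from $\mathrm{range}(\mathbf{A})$ to the behaviour of $\mathbf{B}$ on all of $\mathbb{R}^n$, and (ii) bookkeeping the probability, namely that every step is conditioned on the same event where \eqref{L2_subspaace embedding} holds, so the stated $1-\delta$ probability is not degraded by a union bound.
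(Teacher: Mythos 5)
Your proposal is correct, and for the first two bullets it follows essentially the same route as the paper: invertibility of $\mathbf{R}$ from Theorem \ref{theorem:3.1} plus the thin QR factorization, and the condition-number bound by substituting $\mathbf{x}=\mathbf{R}^{-1}\mathbf{y}$ into \eqref{L2_subspaace embedding} and using $\|\mathbf{S}\mathbf{A}\mathbf{R}^{-1}\mathbf{y}\|_2=\|\mathbf{Q}\mathbf{y}\|_2=\|\mathbf{y}\|_2$ (what the paper calls the Pythagorean step), then reading off $\sigma_{\min}$ and $\sigma_{\max}$ of $\mathbf{B}=\mathbf{A}\mathbf{R}^{-1}$ via the variational characterization. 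The only place you genuinely diverge is the third bullet: you deduce full column rank of $\mathbf{A}\mathbf{R}^{-1}$ directly from the lower bound $\sigma_{\min}(\mathbf{B})\geq(1+\epsilon)^{-1/2}>0$ already established in the second bullet, so the normal-equations Gram matrix is invertible and the minimizer is unique. The paper instead proves $\mathrm{rank}(\mathbf{A}\mathbf{R}^{-1})=n$ by a separate block-matrix argument (generalized elementary transformations yielding a Frobenius/Sylvester-type inequality $\mathrm{rank}(\mathbf{A}\mathbf{R}^{-1})\geq \mathrm{rank}(\mathbf{A})+\mathrm{rank}(\mathbf{R}^{-1})-n$), which does not use the embedding at all; your shortcut is simpler and equally valid on the same probability-$1-\delta$ event, while the paper's argument has the cosmetic advantage of depending only on invertibility of $\mathbf{R}$ and full column rank of $\mathbf{A}$ (indeed, the product of a full-column-rank matrix with an invertible matrix is trivially of full column rank), independent of the embedding inequality. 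Your probability bookkeeping — conditioning all three claims on the single event where \eqref{L2_subspaace embedding} holds, so no union bound is needed — matches the paper's intent.
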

\begin{proof}
  (1) According to Theorem \ref{theorem:3.1}, the sketched matrix $\tilde{\mathbf{A}}$ and matrix $\mathbf{A}$ both have rank $n$. Therefore, the rank of the $n \times n$ upper triangular matrix $\mathbf{R}$ must be $n$, making it invertible.
  \par (2) According to Lemma \ref{count sketch lemma}, with probability of at least $1-\delta$, it follows that
  $$
  (1-\epsilon)\|\mathbf{A}\mathbf{x}\|_2^2 \leq\|\tilde{\mathbf{A}}\mathbf{x}\|_2^2 \leq(1+\epsilon)\|\mathbf{A}\mathbf{x}\|_2^2, \quad \text { for all } \mathbf{x} \in \mathbb{R}^n.
  $$
  Thus, it yields that
  $$
  (1-\epsilon)\|\mathbf{A}(\mathbf{R}^{-1}\mathbf{x})\|_2^2 \leq\|\tilde{\mathbf{A}} (\mathbf{R}^{-1}\mathbf{x}) \|_2^2 \leq(1+\epsilon)\|\mathbf{A}(\mathbf{R}^{-1}\mathbf{x})\|_2^2, \quad \text { for all } \mathbf{x} \in \mathbb{R}^n,
  $$
  i.e.,
  $$
  (1-\epsilon) \|\mathbf{A} \mathbf{R}^{-1} \mathbf{x} \|_2^2 \leq\|(\mathbf{Q}\mathbf{R})\mathbf{R}^{-1} \mathbf{x}\|_2^2 \leq(1+\epsilon) \|\mathbf{A} \mathbf{R}^{-1}\mathbf{x}  \|_2^2.
  $$
  Here $\mathbf{Q}$ is the matrix with orthogonal columns obtained from the QR decomposition of $\tilde{\mathbf{A}}$, that is, $\tilde{\mathbf{A}}=\mathbf{QR}$.
  By the Pythagorean theorem, we obtain
  $$
  (1-\epsilon)\|\mathbf{A}\mathbf{R}^{-1}\mathbf{x}\|^2_2\leq \|\mathbf{x}\|^2_2\leq (1+\epsilon)\|\mathbf{A}\mathbf{R}^{-1}\mathbf{x}\|^2_2, 
  $$
  which can be further implied as
  $$
\frac{1}{1+\epsilon}\|\mathbf{x}\|_2^2 \leq\|\mathbf{A}\mathbf{R}^{-1} \mathbf{x}\|_2^2 \leq \frac{1}{1-\epsilon}\|\mathbf{x}\|_2^2.
$$
Dividing both sides by $\|\mathbf{x}\|_2^2$ leads to
$$
\frac{1}{1+\epsilon} \leq \frac{\|\mathbf{A}\mathbf{R}^{-1}\mathbf{x}\|_2^2}{\|\mathbf{x}\|_2^2} \leq \frac{1}{1-\epsilon}, \quad \text { for all } \mathbf{0}\neq \mathbf{x} \in \mathbb{R}^n.
$$
Furthermore, the variational definition of singular values leads to
$$
\kappa^2(\mathbf{A} \mathbf{R}^{-1})=\frac{\sigma_{\max }^2(\mathbf{A}\mathbf{R}^{-1})}{\sigma_{\min }^2(\mathbf{A}\mathbf{R}^{-1})} \leq \frac{1+\epsilon}{1-\epsilon} .
$$
Therefore, the conclusion holds.
\par
(3) Since the matrix $\mathbf{A}$ is of full column rank and the upper triangular matrix $\mathbf{R}$ is invertible, it follows that 
$$
rank(\mathbf{A}\mathbf{R}^{-1})\leq \min\left\{rank(\mathbf{A}), rank(\mathbf{R}^{-1})\right\}=n.
$$
From the generalized elementary transformations of matrices, it is demonstrated that
\begin{align*}
rank(\mathbf{A}\mathbf{R}^{-1})+n&=rank(\mathbf{A}\mathbf{R}^{-1})+rank(\mathbf{I}_n) \\
  & =rank\left(\left(\begin{array}{cc}
  \mathbf{A}\mathbf{R}^{-1} & \mathbf{0} \\
  \mathbf{0} & \mathbf{I}_n
  \end{array}\right)\right) 
   =rank\left(\left(\begin{array}{cc}
  \mathbf{A}\mathbf{R}^{-1} & \mathbf{0} \\
  \mathbf{0} & \mathbf{I}_n
  \end{array}\right)\cdot\left(\begin{array}{cc}
  \mathbf{I}_n & \mathbf{0} \\
  \mathbf{R}^{-1} & \mathbf{I}_n
  \end{array}\right)\right) \\
  & =rank\left(\left(\begin{array}{cc}
  \mathbf{A}\mathbf{R}^{-1} & \mathbf{0} \\
  \mathbf{R}^{-1} & \mathbf{I}_n
  \end{array}\right)\right)  =rank\left(\left(\begin{array}{cc}
  \mathbf{I}_m & -\mathbf{A} \\
  \mathbf{0} & \mathbf{I}_n
  \end{array}\right) \cdot\left(\begin{array}{cc}
  \mathbf{A}\mathbf{R}^{-1} & \mathbf{0} \\
  \mathbf{R}^{-1} & \mathbf{I}_n
  \end{array}\right)\right) \\
  & =rank\left(\left(\begin{array}{cc}
  \mathbf{0} & -\mathbf{A} \\
  \mathbf{R}^{-1} & \mathbf{I}_n
  \end{array}\right)\right)  \geq rank(\mathbf{A})+rank(\mathbf{R}^{-1}).
\end{align*}
From the above inequality, it is readily apparent that 
  $$
  rank(\mathbf{A}\mathbf{R}^{-1})\geq rank(\mathbf{A})+rank(\mathbf{R}^{-1})-n=n.
  $$
  Thus $rank(\mathbf{A}\mathbf{R}^{-1})=n$, and \eqref{pre_least_squares} admits a unique solution.
\end{proof}

The next theorem provides the error estimate for the CSQRP-LSQR method.
\begin{theorem}\label{CSQR-PLSQR_convergence}
  Under the same conditions as in Theorem \ref{theorem:3.2}, and letting $\mathbf{x}_{\star}$ and $\mathbf{y}_{\star}$  be the least squares solutions to \eqref{least-squares problem} and \eqref{pre_least_squares} respectively, where $\mathbf{x}_{\star}=\mathbf{A}^{\dagger}\mathbf{b}$ and $\mathbf{y}_{\star}=(\mathbf{A}\mathbf{R}^{-1})^{\dagger}\mathbf{b}$. Given a tolerance $\tau>0$, when the LSQR method with initial guess $\mathbf{y}^{(0)}=\mathbf{0}$ is used to solve \eqref{pre_least_squares}, if the numbers of iteration steps satisfies
  \begin{equation}\label{inequality:3.4}
    k\geq \frac{ln(2)+\left| ln(\tau)\right|}{\left|ln(\epsilon)\right|},
  \end{equation}
  then  
  \begin{equation}\label{convergence_rate_1}
    \|\mathbf{y}^{(k)}-\mathbf{y}_{\star}\|_{\mathbf{B}^T\mathbf{B}}\leq \tau \|\mathbf{y}_{\star}\|_{\mathbf{B}^T\mathbf{B}}
  \end{equation}
  holds with a probability of at least $1-\delta$, where $\mathbf{B}=\mathbf{A}\mathbf{R}^{-1}$ and $\mathbf{y}^{(k)}$ represents the approximate solution at the $k$-th iteration of the LSQR method. Let $\mathbf{x}^{(k)}=\mathbf{R}^{-1}\mathbf{y}^{(k)}$ be an approximate solution to the original least squares problem \eqref{least-squares problem}. Since $\mathbf{x}_{\star}=\mathbf{R}^{-1}\mathbf{y}_{\star}$, it follows that inequality \eqref{convergence_rate_1} is also equivalent to 
  $$
  \|\mathbf{x}^{(k)}-\mathbf{x}_{\star}\|_{\mathbf{A}^T\mathbf{A}}\leq \tau\|\mathbf{x}_{\star}\|_{\mathbf{A}^T\mathbf{A}}.
  $$
  If the initial guess $\mathbf{y}^{(0)}=\mathbf{Q}^T\mathbf{S}\mathbf{b}$ is used in Step 4 of the CSQRP-LSQR method. Then the following error estimate
  \begin{equation}\label{convergence_rate_2}
    \|\mathbf{A}\mathbf{x}^{(k)}-\mathbf{b}\|_2\leq \sqrt{2\left(\frac{4\tau^2}{(1-\epsilon)}+1\right)}\|\mathbf{A}\mathbf{x}_{\star}-\mathbf{b}\|_2
  \end{equation}
  holds with a probability of at least $1-\delta$.
\end{theorem}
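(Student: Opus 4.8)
The plan is to exploit the fact that, because $\mathbf{B}=\mathbf{A}\mathbf{R}^{-1}$ is extremely well conditioned (by Theorem~\ref{theorem:3.2}, $\kappa(\mathbf{B})^2\le (1+\epsilon)/(1-\epsilon)$), the LSQR iterates on \eqref{pre_least_squares} converge linearly with a rate governed by $\kappa(\mathbf{B})$, and then to translate this back through the change of variables $\mathbf{x}=\mathbf{R}^{-1}\mathbf{y}$. I will treat the two error bounds separately.

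For \eqref{convergence_rate_1}: LSQR applied to $\min_{\mathbf{y}}\|\mathbf{B}\mathbf{y}-\mathbf{b}\|_2$ is mathematically equivalent to CG on the normal equations $\mathbf{B}^T\mathbf{B}\mathbf{y}=\mathbf{B}^T\mathbf{b}$, so the standard CG energy-norm bound gives $\|\mathbf{y}^{(k)}-\mathbf{y}_\star\|_{\mathbf{B}^T\mathbf{B}}\le 2\bigl(\tfrac{\kappa(\mathbf{B})-1}{\kappa(\mathbf{B})+1}\bigr)^k\|\mathbf{y}^{(0)}-\mathbf{y}_\star\|_{\mathbf{B}^T\mathbf{B}}$ when $\mathbf{y}^{(0)}=\mathbf{0}$, i.e.\ the right side is $2\rho^k\|\mathbf{y}_\star\|_{\mathbf{B}^T\mathbf{B}}$ with $\rho=\tfrac{\kappa(\mathbf{B})-1}{\kappa(\mathbf{B})+1}$. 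First I would bound $\rho$: from $\kappa(\mathbf{B})\le\sqrt{(1+\epsilon)/(1-\epsilon)}$ one checks $\rho=\tfrac{\kappa-1}{\kappa+1}\le\epsilon$ (this elementary inequality, $\tfrac{\sqrt{(1+\epsilon)/(1-\epsilon)}-1}{\sqrt{(1+\epsilon)/(1-\epsilon)}+1}\le\epsilon$ for $\epsilon\in(0,1)$, is where the clean constant comes from — rationalizing gives $\tfrac{1-\sqrt{1-\epsilon^2}}{\epsilon}\le\epsilon$, equivalently $1-\epsilon^2\le\sqrt{1-\epsilon^2}$, which holds). Then $2\rho^k\le\tau$ is implied by $2\epsilon^k\le\tau$, i.e.\ $k\,|\ln\epsilon|\ge\ln 2+|\ln\tau|$, which is exactly \eqref{inequality:3.4}. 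The equivalence with $\|\mathbf{x}^{(k)}-\mathbf{x}_\star\|_{\mathbf{A}^T\mathbf{A}}\le\tau\|\mathbf{x}_\star\|_{\mathbf{A}^T\mathbf{A}}$ is immediate since $\|\mathbf{A}\mathbf{R}^{-1}\mathbf{v}\|_2=\|\mathbf{B}\mathbf{v}\|_2$ and $\mathbf{x}^{(k)}-\mathbf{x}_\star=\mathbf{R}^{-1}(\mathbf{y}^{(k)}-\mathbf{y}_\star)$, $\mathbf{x}_\star=\mathbf{R}^{-1}\mathbf{y}_\star$ (using $\mathbf{x}_\star=\mathbf{A}^\dagger\mathbf{b}$ and that $\mathbf{R}^{-1}$ maps the solution set of \eqref{pre_least_squares} bijectively to that of \eqref{least-squares problem}).

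For \eqref{convergence_rate_2}: here the initial guess is $\mathbf{y}^{(0)}=\mathbf{Q}^T\mathbf{S}\mathbf{b}$, the least squares solution of $\min_{\mathbf{y}}\|\mathbf{R}\mathbf{y}-\mathbf{S}\mathbf{b}\|_2$ (since $\tilde{\mathbf{A}}=\mathbf{Q}\mathbf{R}$). I would start from the Pythagorean identity $\|\mathbf{A}\mathbf{x}^{(k)}-\mathbf{b}\|_2^2=\|\mathbf{A}\mathbf{x}_\star-\mathbf{b}\|_2^2+\|\mathbf{A}(\mathbf{x}^{(k)}-\mathbf{x}_\star)\|_2^2=\|\mathbf{A}\mathbf{x}_\star-\mathbf{b}\|_2^2+\|\mathbf{y}^{(k)}-\mathbf{y}_\star\|_{\mathbf{B}^T\mathbf{B}}^2$, and then bound $\|\mathbf{y}^{(k)}-\mathbf{y}_\star\|_{\mathbf{B}^T\mathbf{B}}^2\le 4\epsilon^{2k}\|\mathbf{y}^{(0)}-\mathbf{y}_\star\|_{\mathbf{B}^T\mathbf{B}}^2$ (CG bound with this starting point, $\rho\le\epsilon$ as above). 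The task is then to control $\|\mathbf{y}^{(0)}-\mathbf{y}_\star\|_{\mathbf{B}^T\mathbf{B}}^2=\|\mathbf{B}(\mathbf{y}^{(0)}-\mathbf{y}_\star)\|_2^2$ by $\|\mathbf{A}\mathbf{x}_\star-\mathbf{b}\|_2^2$. Using $\|\mathbf{B}\mathbf{v}\|_2^2\ge\tfrac{1}{1+\epsilon}\|\mathbf{v}\|_2^2$ is the wrong direction; instead I would use the subspace-embedding inequality directly: writing $\mathbf{u}^{(0)}=\mathbf{R}^{-1}\mathbf{y}^{(0)}$, note $\|\mathbf{S}(\mathbf{A}\mathbf{u}^{(0)}-\mathbf{b})\|_2=\|\mathbf{R}\mathbf{y}^{(0)}-\mathbf{S}\mathbf{b}\|_2$ is minimized over all $\mathbf{y}^{(0)}$, hence $\le\|\mathbf{S}(\mathbf{A}\mathbf{x}_\star-\mathbf{b})\|_2$; combined with $\|\mathbf{A}\mathbf{u}^{(0)}-\mathbf{b}\|_2^2\ge$ a residual-plus-orthogonal-component split and the embedding bounds on $\mathbf{S}$, this yields an estimate of the form $\|\mathbf{A}(\mathbf{u}^{(0)}-\mathbf{x}_\star)\|_2^2\le\tfrac{c}{1-\epsilon}\|\mathbf{A}\mathbf{x}_\star-\mathbf{b}\|_2^2$; chasing the constants (the factor $4$ in \eqref{convergence_rate_2} suggests one also folds in a $2\epsilon^k$-type factor, bounded by $1$, or uses $\epsilon<1$) and then plugging into the Pythagorean identity gives $\|\mathbf{A}\mathbf{x}^{(k)}-\mathbf{b}\|_2^2\le\bigl(1+\tfrac{4\tau^2}{1-\epsilon}\bigr)\|\mathbf{A}\mathbf{x}_\star-\mathbf{b}\|_2^2$, and a further factor $2$ (absorbing cross terms or a crude $a+b\le 2\max$) produces the stated bound.

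The main obstacle is the second part: pinning down exactly how $\|\mathbf{y}^{(0)}-\mathbf{y}_\star\|_{\mathbf{B}^T\mathbf{B}}$ relates to the optimal residual $\|\mathbf{A}\mathbf{x}_\star-\mathbf{b}\|_2$. The clean route is the well-known sketch-and-solve guarantee — the solution $\mathbf{u}^{(0)}$ of the sketched least squares problem $\min\|\mathbf{S}(\mathbf{A}\mathbf{u}-\mathbf{b})\|_2$ satisfies $\|\mathbf{A}\mathbf{u}^{(0)}-\mathbf{b}\|_2\le\sqrt{\tfrac{1+\epsilon}{1-\epsilon}}\,\|\mathbf{A}\mathbf{x}_\star-\mathbf{b}\|_2$ when $\mathbf{S}$ is a $(1\pm\epsilon)$ subspace embedding for $[\mathbf{A},\mathbf{b}]$ (or, more carefully, for the span of the columns of $\mathbf{A}$ together with the optimal residual). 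One must be slightly careful that Lemma~\ref{count sketch lemma} is invoked with the right subspace (dimension $n+1$ rather than $n$, which only changes constants inside $s$), but granting that, $\|\mathbf{A}(\mathbf{u}^{(0)}-\mathbf{x}_\star)\|_2^2=\|\mathbf{A}\mathbf{u}^{(0)}-\mathbf{b}\|_2^2-\|\mathbf{A}\mathbf{x}_\star-\mathbf{b}\|_2^2\le\tfrac{2\epsilon}{1-\epsilon}\|\mathbf{A}\mathbf{x}_\star-\mathbf{b}\|_2^2$, and then the CG contraction on top brings in the $\tau$-dependence. The remaining work is purely bookkeeping of the constants to match the precise form $\sqrt{2\bigl(4\tau^2/(1-\epsilon)+1\bigr)}$.
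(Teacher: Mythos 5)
Your argument for the first bound is essentially the paper's own: the CG-on-normal-equations contraction, the estimate $\frac{\sqrt{\kappa(\mathbf{B}^T\mathbf{B})}-1}{\sqrt{\kappa(\mathbf{B}^T\mathbf{B})}+1}\le\frac{\epsilon}{1+\sqrt{1-\epsilon^2}}\le\epsilon$ coming from Theorem~\ref{theorem:3.2}, the logarithmic manipulation that turns $2\epsilon^k\le\tau$ into \eqref{inequality:3.4}, and the identity $\|\mathbf{x}^{(k)}-\mathbf{x}_\star\|_{\mathbf{A}^T\mathbf{A}}=\|\mathbf{y}^{(k)}-\mathbf{y}_\star\|_{\mathbf{B}^T\mathbf{B}}$ under $\mathbf{x}=\mathbf{R}^{-1}\mathbf{y}$. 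For the second bound your skeleton also matches the paper's (recognize $\mathbf{y}^{(0)}=\mathbf{Q}^T\mathbf{S}\mathbf{b}$ as the sketched least squares solution, bound $\|\mathbf{y}^{(0)}-\mathbf{y}_\star\|_{\mathbf{B}^T\mathbf{B}}$ by a multiple of the optimal residual, apply the $\tau$-contraction, recombine), but the bookkeeping differs in two respects worth noting. First, you use the exact Pythagorean decomposition $\|\mathbf{A}\mathbf{x}^{(k)}-\mathbf{b}\|_2^2=\|\mathbf{A}\mathbf{x}_\star-\mathbf{b}\|_2^2+\|\mathbf{A}(\mathbf{x}^{(k)}-\mathbf{x}_\star)\|_2^2$ (and the analogous identity for $\hat{\mathbf{x}}$), where the paper twice uses the cruder $\|\mathbf{a}+\mathbf{b}\|_2^2\le 2(\|\mathbf{a}\|_2^2+\|\mathbf{b}\|_2^2)$; your resulting constant $1+2\epsilon\tau^2/(1-\epsilon)$ is smaller than the stated $2\left(4\tau^2/(1-\epsilon)+1\right)$, so \eqref{convergence_rate_2} follows a fortiori. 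Second, both proofs must apply the embedding to the residual vectors $\mathbf{A}\hat{\mathbf{x}}-\mathbf{b}$ and $\mathbf{A}\mathbf{x}_\star-\mathbf{b}$: the paper justifies this by asserting that $\mathbf{x}_\star=\mathbf{A}^{\dagger}\mathbf{b}$ implies $\mathbf{b}=\mathbf{A}\mathbf{x}_\star\in\mathrm{range}(\mathbf{A})$, which holds only for consistent systems (and would make the right-hand side of \eqref{convergence_rate_2} zero), whereas you invoke the standard sketch-and-solve guarantee and explicitly flag that it needs $\mathbf{S}$ to embed the $(n+1)$-dimensional span of the columns of $\mathbf{A}$ together with $\mathbf{b}$, a strengthening that only affects the constant in the sample size of Lemma~\ref{count sketch lemma}. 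So on this point your treatment is the more careful one; the only soft spot in your write-up, the speculative sentence about where the factor $4$ originates, is superseded by your own final clean derivation and does not constitute a gap.
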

\begin{proof}
  It is known that the convergence rate of the LSQR method is determined by the condition number of $\mathbf{B}^T\mathbf{B}$ as
  \begin{equation}\label{equation:3.5}
    \frac{\|\mathbf{y}^{(k)}-\mathbf{y}_{\star}\|_{\mathbf{B}^T \mathbf{B}}}{\|\mathbf{y}^{(0)}-\mathbf{y}_{\star}\|_{\mathbf{B}^T\mathbf{B}}}\leq 2 \left(\frac{\sqrt{\kappa(\mathbf{B}^T\mathbf{B})}-1}{\sqrt{\kappa(\mathbf{B}^T\mathbf{B})}+1}\right)^k.
  \end{equation}
  According to Theorem \ref{theorem:3.2}, it follows that
  $$
  \kappa(\mathbf{B}^T\mathbf{B})\leq \frac{1+\epsilon}{1-\epsilon}.
  $$
  Hence, we have
  \begin{equation}\label{equation:3.6}
    \begin{aligned}
      \frac{\sqrt{\kappa(\mathbf{B}^T\mathbf{B})}-1}{\sqrt{\kappa(\mathbf{B}^T\mathbf{B})}+1}&\leq \frac{\sqrt{1+\epsilon}-\sqrt{1-\epsilon}}{\sqrt{1+\epsilon}+\sqrt{1-\epsilon}}\\
      &=\frac{(\sqrt{1+\epsilon}-\sqrt{1-\epsilon})\cdot(\sqrt{1+\epsilon}+\sqrt{1-\epsilon})}{(\sqrt{1+\epsilon}-\sqrt{1-\epsilon})^2}\\
      &=\frac{\epsilon}{1+\sqrt{1-\epsilon^2}}\leq \epsilon.
    \end{aligned}
  \end{equation}
  A combination of inequalities \eqref{equation:3.5} and \eqref{equation:3.6} leads to
  \begin{equation}\label{equation:3.8}
    \|\mathbf{y}^{(k)}-\mathbf{y}_{\star}\|_{\mathbf{B}^T\mathbf{B}}\leq 2\epsilon^k \|\mathbf{y}_{\star}\|_{\mathbf{B}^T\mathbf{B}}.
  \end{equation}
  Therefore, \eqref{equation:3.8} implies that inequality \eqref{convergence_rate_1} holds, i.e.,
  \begin{equation}\label{inequality:3.9}
    \tau \geq 2 \epsilon^k \Longleftrightarrow ln(\tau)\geq ln(2)+k \cdot ln(\epsilon).
  \end{equation}
  Since $\epsilon, \tau \in (0, 1)$, it follows from \eqref{inequality:3.9} that if the number of iterations $k$ satisfies \eqref{inequality:3.4}, then the error estimate \eqref{convergence_rate_1} is necessarily satisfied.
  \par
  Set $\hat{\mathbf{x}}=\mathbf{R}^{-1}\mathbf{Q}^T\mathbf{S}\mathbf{b}$. Based on $\mathbf{x}_{\star}=\mathbf{R}^{-1}\mathbf{y}_{\star}$, it follows that
  \begin{equation}\label{Inequality:3.10}
    \begin{aligned}
      \|\mathbf{y}^{(0)}-\mathbf{y}_{\star}\|^2_{\mathbf{B}^T\mathbf{B}}&=\|\mathbf{A}\mathbf{R}^{-1}\mathbf{y}^{(0)}-\mathbf{A}\mathbf{R}^{-1}\mathbf{y}_{\star}\|^2_2=\|\mathbf{A}\hat{\mathbf{x}}-\mathbf{A}\mathbf{x}_{\star}\|_2^2=\|(\mathbf{A}\hat{\mathbf{x}}-\mathbf{b})-(\mathbf{A}\mathbf{x}_{\star}-\mathbf{b})\|^2_2\\
      & \leq \|\mathbf{A}\hat{\mathbf{x}}-\mathbf{b}\|^2_2+ 2\left|(\mathbf{A}\hat{\mathbf{x}}-\mathbf{b})^T\cdot(\mathbf{A}\mathbf{x}_{\star}-\mathbf{b})\right|+\|\mathbf{A}\mathbf{x}_{\star}-\mathbf{b}\|^2_2\\
      &\leq 2(\|\mathbf{A}\hat{\mathbf{x}}-\mathbf{b}\|^2_2+\|\mathbf{A}\mathbf{x}_{\star}-\mathbf{b}\|^2_2).
    \end{aligned}
  \end{equation}
Notice the fact that both $\mathbf{a}$ and $\mathbf{b}$ belong to $\mathbb{R}^n$. Then, $\mathbf{a}^T \cdot \mathbf{b} \leq\|\mathbf{a}\|_2\|\mathbf{b}\|_2 \leq \frac{1}{2}\left(\|\mathbf{a}\|_2^2+\|\mathbf{b}\|_2^2\right)$. Therefore, the last inequality in \eqref{Inequality:3.10} holds. Since $\hat{\mathbf{x}}$ is a solution to $\tilde{\mathbf{A}}^T\tilde{\mathbf{A}}\hat{\mathbf{x}}=\tilde{\mathbf{A}}^T\mathbf{Sb}$, $\hat{\mathbf{x}}$ is the optimal solution to the sketched least squares problem $\underset{\mathbf{x}\in\mathbb{R}^n}{\min}\|\tilde{\mathbf{A}}\mathbf{x}-\mathbf{Sb}\|_2$. 
According to Definition \ref{subspace_def}, it is evident that inequality \eqref{L2_subspaace embedding} is equivalent to
  \begin{equation}\label{L2_subspace_new_inequality}
    (1-\epsilon)\|\mathbf{x}\|^2_2 \leq \|\mathbf{S}\mathbf{x}\|^2_2\leq (1+\epsilon)\|\mathbf{x}\|^2_2, \quad \forall \mathbf{x} \in  range(\mathbf{A}).
  \end{equation}
  Since $\mathbf{x}_{\star}=\mathbf{A}^{\dagger} \mathbf{b}$, it follows that $\mathbf{b}=\mathbf{A}\mathbf{x}_{\star}$, and thus $\mathbf{b}\in range(\mathbf{A})$, where $range(\cdot)$ denotes the column space of the corresponding matrix. 
 Thus $\mathbf{A}\hat{\mathbf{x}}-\mathbf{b}\in range(\mathbf{A})$, and 
\begin{equation}\label{Inequality:3.11}
  \begin{aligned}
    \|\mathbf{A}\hat{\mathbf{x}}-\mathbf{b}\|^2_2&\leq \frac{1}{(1-\epsilon)}\|\mathbf{S}\mathbf{A}\hat{\mathbf{x}}-\mathbf{S}\mathbf{b}\|^2_2 \leq\frac{1}{(1-\epsilon)}\|\mathbf{S}\mathbf{A}\mathbf{x}_{\star}-\mathbf{S}\mathbf{b}\|^2_2 \leq \frac{(1+\epsilon)}{(1-\epsilon)}\|\mathbf{A}\mathbf{x}_{\star}-\mathbf{b}\|^2_2,
  \end{aligned}
\end{equation}
where the first and last inequalities are obtained from \eqref{L2_subspace_new_inequality}, while the second inequality is due to $\hat{\mathbf{x}}$ minimizing $\|\mathbf{SAx}-\mathbf{Sb}\|_2$.
Hence, based on inequalities \eqref{Inequality:3.10} and \eqref{Inequality:3.11}, we conclude that
\begin{equation}\label{Inequality:3.12}
  \begin{aligned}
    \|\mathbf{y}^{(0)}-\mathbf{y}_{\star}\|^2_{\mathbf{B}^T\mathbf{B}}&\leq 2 \left(1 + \frac{1+\epsilon}{1-\epsilon}\right)\|\mathbf{A}\mathbf{x}_{\star}-\mathbf{b}\|^2_2 = \frac{4}{(1-\epsilon)}\|\mathbf{A}\mathbf{x}_{\star}-\mathbf{b}\|^2_2.
  \end{aligned}
\end{equation}
From the above proof, it is evident that when the number of iterations $k$ satisfies condition \eqref{inequality:3.4}, the following inequality holds
\begin{equation}\label{Inequality:3.13}
  \|\mathbf{y}^{(k)}-\mathbf{y}_{\star}\|^2_{\mathbf{B}^T\mathbf{B}}\leq \tau^2\|\mathbf{y}^{(0)}-\mathbf{y}_{\star}\|^2_{\mathbf{B}^T\mathbf{B}}.
\end{equation}
Thus, from \eqref{Inequality:3.12} and \eqref{Inequality:3.13} , it can be concluded that 
$$
\|\mathbf{y}^{(k)}-\mathbf{y}_{\star}\|^2_{\mathbf{B}^T\mathbf{B}}\leq \frac{4 \tau^2}{(1-\epsilon)}\|\mathbf{A}\mathbf{x}_{\star}-\mathbf{b}\|^2_2.
$$
We note that $\|\mathbf{x}^{(k)}-\mathbf{x}_{\star}\|_{\mathbf{A}^T\mathbf{A}}^2=\|\mathbf{y}^{(k)}-\mathbf{y}_{\star}\|^2_{\mathbf{B}^T\mathbf{B}}$, which leads to the following error estimate
$$
\begin{aligned}
  \|\mathbf{A}\mathbf{x}^{(k)}-\mathbf{b}\|_2^2&=\|\mathbf{A}\mathbf{x}^{(k)}-\mathbf{A}\mathbf{x}_{\star}+\mathbf{A}\mathbf{x}_{\star}-\mathbf{b}\|_2^2\leq 2(\|\mathbf{A}\mathbf{x}^{(k)}-\mathbf{A}\mathbf{x}_{\star}\|^2_2+\|\mathbf{A}\mathbf{x}_{\star}-\mathbf{b}\|^2_2)\\
  & =2(\|\mathbf{x}^{(k)}-\mathbf{x}_{\star}\|_{\mathbf{A}^T\mathbf{A}}^2+\|\mathbf{A}\mathbf{x}_{\star}-\mathbf{b}\|^2_2) \leq 2\left(\frac{4\tau^2}{(1-\epsilon)}+1\right)\|\mathbf{A}\mathbf{x}_{\star}-\mathbf{b}\|^2_2,
\end{aligned}
$$
which completes the proof.
\end{proof}
\par
From Theorem \ref{theorem:3.1}, it is known that if $\mathbf{A}$ is a rank-deficient matrix, then the sketched matrix $\mathbf{S}\mathbf{A}$ is also rank-deficient, and so is the upper triangular matrix $\mathbf{R}$ obtained from the QR decomposition of $\mathbf{S} \mathbf{A}$. This prevents the preconditioning step in the CSQRP-LSQR method. A common approach for this issue is applying a threshold, treating all smaller singular values as zero (truncated SVD). In LAPACK, this threshold is the largest singular value of $\mathbf{A}$ multiplied by a user-supplied constant, referred to as RCOND. Thus, to effectively address the limitations of the CSQRP-LSQR method in handling rank-deficient or approximately rank-deficient least squares problems, we perform a truncated SVD on $\mathbf{S}\mathbf{A}$ to obtain the preconditioner. This leads to the second method, the CSSVDP-LSQR method; see Algorithm \ref{Alg:CSSVD-PLSQR} for details. It employs the same approach to compute the preconditioner, the same as that in the LSRN algorithm \cite{MSM}.

\par
\begin{algorithm}[H]
  \caption{The CSSVDP-LSQR method}\label{Alg:CSSVD-PLSQR}
  \KwIn {$\mathbf{A}\in \mathbb{R}^{m\times n},\mathbf{b}\in \mathbb{R}^n$, $\mathbf{y}^{(0)}\in \mathbb{R}^n$}
	\KwOut {Approximate solution {$\mathbf{x}^{(k)}$}}
  Choose an oversampling factor $\gamma>1$ and set $s=\gamma n$ ($s<m$).\\
  Generate a $s$-by-$m$ count sketch matrix $\mathbf{S}$ and compute $\tilde{\mathbf{A}}=\mathbf{S}\mathbf{A}$.\\
  Compute the compact SVD $\mathbf{U}\mathbf{\mathbf{\Sigma}}\mathbf{V}^T$ of matrix $\tilde{\mathbf{A}}$, where $r=\operatorname{rank}(\tilde{\mathbf{A}}), \mathbf{U} \in \mathbb{R}^{s \times r}, \mathbf{\mathbf{\Sigma}} \in \mathbb{R}^{r \times r}$, $\mathbf{V} \in \mathbb{R}^{n \times r}$, and let $\mathbf{P}=\mathbf{V}\mathbf{\mathbf{\Sigma}}^{-1}$.\\
  Compute the preconditioned matrix $\mathbf{B}=\mathbf{A}\mathbf{P}$.\\
  Use the LSQR method to compute an approximate solution $\mathbf{y}^{(k)}$ for 
  \begin{equation}\label{Pre_least_squares}
    \underset{\mathbf{y}\in \mathbb{R}^n}{\min}\|\mathbf{B}\mathbf{y}-\mathbf{b}\|_2.
  \end{equation}\\
  Return $\mathbf{x}^{(k)}=\mathbf{P}\mathbf{y}^{(k)}$.
\end{algorithm}
\begin{remark}
Similar to the LSRN method, if the preconditioner obtained in Step 3 of Algorithms \ref{Alg:CSQR-PLSQR} and \ref{Alg:CSSVD-PLSQR} is directly used as the preconditioner for the LSQR method, the methods are referred to as the CSQR-PLSQR method and the CSSVD-PLSQR method, respectively.
\end{remark}
\begin{remark}
  In exact arithmetic, the CSQRP-LSQR and CSQR-PLSQR methods and the CSSVDP-LSQR and CSSVD-PLSQR methods give identical solutions. However, in dealing with highly ill-conditioned problems in floating-point algorithms, we observe significant discrepancies in numerical experiments. Specifically, CSQR-PLSQR is less accurate than CSQRP-LSQR, and LSRN and CSSVD-PLSQR are less accurate than CSSVDP-LSQR.
\end{remark}
\par
Following \cite{MSM}, let $\mathbf{A} \in \mathbb{R}^{m \times n}$, $m \gg n$, be an approximately rank-$k$ matrix, decomposed as $\mathbf{A}_k + \mathbf{E}$ with $\mathbf{A}_k$ as the best rank-$k$ approximation of $\mathbf{A}$ and $k < n$. Assume $\sigma_1 \geq \sigma_k \gg c \sigma_1 \gg \sigma_{k+1} = \|\mathbf{E}\|_2$, where $c > 0$ is a known constant. This constant $c$ helps determine the effective rank $k$ during a truncated SVD, where the solution to the approximate or exact rank-deficiency least squares problem is $\mathbf{x}_{\star} = \mathbf{A}_k^{\dagger}\mathbf{b}$. In the CSSVDP-LSQR method, the truncated SVD of $\mathbf{S} \mathbf{A}$ uses $c$ to determine the effective rank $\tilde{k}$ of $\tilde{\mathbf{A}}$. Assuming exact arithmetic and exact solving of the preconditioned system, consistent with the error analysis for the LSRN algorithm in \cite{MSM}, we show that $\tilde{k} = k$ and derive the approximation error of $\hat{\mathbf{x}}$. According to \eqref{L2_subspaace embedding}, the following inequality holds:
\begin{equation}\label{equation:3.16}
  \frac{1}{\sqrt{1+\epsilon}} \|\mathbf{SAx}\|_2 \leq \|\mathbf{Ax}\|_2 \leq \frac{1}{\sqrt{1-\epsilon}} \|\mathbf{SAx}\|_2, \quad \forall \mathbf{x} \in \mathbb{R}^n.
\end{equation}
\begin{theorem}\label{theorem:3.4}
 If \eqref{equation:3.16} holds, $\sigma_{k+1}<c \sigma_1 \sqrt{\frac{1-\epsilon}{1+\epsilon}}$, and $\sigma_k>c \sigma_1\left(1+\sqrt{\frac{1+\epsilon}{1-\epsilon}}\right)$, we have $\tilde{k}=k$, i.e., the CSSVDP-LSQR method determines the effective rank of $\mathbf{A}$ correctly, and
$$
\|\hat{\mathbf{x}}-\mathbf{x}_{\star}\|_2 \leq \frac{6\|b\|_2}{\left(\sigma_k \sqrt{\frac{1-\epsilon}{1+\epsilon}}\right)\left(\sigma_k \sqrt{\frac{1-\epsilon}{1+\epsilon}}-\sigma_{k+1}\right)} \cdot \sigma_{k+1},
$$
where $\hat{\mathbf{x}}=\mathbf{P}(\mathbf{AP})^{\dagger}\mathbf{b}$.
\end{theorem}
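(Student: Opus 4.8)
The plan is to follow the rank-deficient error analysis of LSRN in \cite{MSM}, specialized to the count sketch and to the conditioning provided by \eqref{equation:3.16}, in three stages: (i) singular-value interlacing between $\mathbf{A}$ and $\mathbf{SA}$ and the resulting rank determination; (ii) a structural reduction that rewrites $\hat{\mathbf{x}}$ in a form amenable to perturbation analysis, together with the conditioning estimates it produces; (iii) a two-term error bound obtained by inserting an intermediate vector.

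\textbf{Stage 1: interlacing and $\tilde k=k$.} I would first rewrite \eqref{equation:3.16} as $\sqrt{1-\epsilon}\,\|\mathbf{Ax}\|_2\le\|\mathbf{SAx}\|_2\le\sqrt{1+\epsilon}\,\|\mathbf{Ax}\|_2$ for all $\mathbf{x}\in\mathbb{R}^n$, and feed this into the Courant--Fischer min--max formula for singular values to get $\sqrt{1-\epsilon}\,\sigma_i(\mathbf{A})\le\sigma_i(\mathbf{SA})\le\sqrt{1+\epsilon}\,\sigma_i(\mathbf{A})$ for every $i$. Since the truncated SVD in Algorithm \ref{Alg:CSSVD-PLSQR} retains exactly the singular values of $\tilde{\mathbf{A}}=\mathbf{SA}$ exceeding the threshold $c\,\sigma_1(\mathbf{SA})$ (the LAPACK \textsc{rcond} convention), it suffices to check $\sigma_{k+1}(\mathbf{SA})\le c\,\sigma_1(\mathbf{SA})<\sigma_k(\mathbf{SA})$: the middle quantity is squeezed between $\sqrt{1-\epsilon}\,c\sigma_1$ and $\sqrt{1+\epsilon}\,c\sigma_1$, and the two hypotheses $\sigma_{k+1}<c\sigma_1\sqrt{(1-\epsilon)/(1+\epsilon)}$ and $\sigma_k>c\sigma_1(1+\sqrt{(1+\epsilon)/(1-\epsilon)})$ are precisely what place it strictly between $\sigma_{k+1}(\mathbf{SA})$ and $\sigma_k(\mathbf{SA})$ after using the interlacing bounds. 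Hence $\tilde k=k$.

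\textbf{Stage 2: reduction and conditioning.} With $\tilde k=k$, write the compact rank-$k$ SVD $\mathbf{SA}=\mathbf{U\Sigma V}^{\!\top}$ and $\mathbf{P}=\mathbf{V\Sigma}^{-1}$. A one-line computation gives $\mathbf{SAP}=\mathbf{U}$, which has orthonormal columns, so by \eqref{equation:3.16} the singular values of $\mathbf{AP}$ lie in $[(1+\epsilon)^{-1/2},(1-\epsilon)^{-1/2}]$; in particular $\mathbf{AP}$ has full column rank and $\kappa(\mathbf{AP})\le\sqrt{(1+\epsilon)/(1-\epsilon)}$. Because $\mathbf{AV}$ is full column rank and $\mathbf{\Sigma}^{-1}$ invertible, $(\mathbf{AV\Sigma}^{-1})^{\dagger}=\mathbf{\Sigma}(\mathbf{AV})^{\dagger}$, so the $\mathbf{\Sigma}$'s cancel and $\hat{\mathbf{x}}=\mathbf{P}(\mathbf{AP})^{\dagger}\mathbf{b}=\mathbf{V}(\mathbf{AV})^{\dagger}\mathbf{b}$. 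Setting $\rho:=\sigma_k\sqrt{(1-\epsilon)/(1+\epsilon)}$, the same ingredients plus $\sigma_k(\mathbf{SA})\ge\sqrt{1-\epsilon}\,\sigma_k$ give $\sigma_{\min}(\mathbf{AV})\ge\rho$, and writing $\mathbf{A}=\mathbf{A}_k+\mathbf{E}$ with $\|\mathbf{E}\|_2=\sigma_{k+1}$ (hence $\|\mathbf{EV}\|_2\le\sigma_{k+1}$) yields $\sigma_{\min}(\mathbf{A}_k\mathbf{V})\ge\rho-\sigma_{k+1}>0$ (positivity again from the hypotheses); consequently $\mathbf{A}_k\mathbf{V}$ is full column rank with $\operatorname{range}(\mathbf{A}_k\mathbf{V})=\operatorname{range}(\mathbf{U}_A)$, the leading-$k$ left singular subspace of $\mathbf{A}$.

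\textbf{Stage 3: the error bound, and the hard part.} Writing $\mathbf{x}_{\star}=\mathbf{A}_k^{\dagger}\mathbf{b}=\mathbf{V}_A(\mathbf{A}_k\mathbf{V}_A)^{\dagger}\mathbf{b}$ with $\mathbf{V}_A$ the leading-$k$ right singular vectors of $\mathbf{A}$, I would insert $\tilde{\mathbf{x}}:=\mathbf{V}(\mathbf{A}_k\mathbf{V})^{\dagger}\mathbf{b}$ and bound $\|\hat{\mathbf{x}}-\mathbf{x}_{\star}\|_2\le\|\mathbf{V}(\mathbf{AV})^{\dagger}\mathbf{b}-\mathbf{V}(\mathbf{A}_k\mathbf{V})^{\dagger}\mathbf{b}\|_2+\|\mathbf{V}(\mathbf{A}_k\mathbf{V})^{\dagger}\mathbf{b}-\mathbf{x}_{\star}\|_2=:(\mathrm{I})+(\mathrm{II})$. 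Term $(\mathrm{I})$ is routine: $\|\mathbf{V}\|_2=1$, $\mathbf{AV}=\mathbf{A}_k\mathbf{V}+\mathbf{EV}$, and a perturbation estimate for the pseudoinverse of equal-rank full-column-rank matrices gives $(\mathrm{I})\le c_0\,\|(\mathbf{AV})^{\dagger}\|_2\|(\mathbf{A}_k\mathbf{V})^{\dagger}\|_2\|\mathbf{EV}\|_2\|\mathbf{b}\|_2\le c_0\,\sigma_{k+1}\|\mathbf{b}\|_2/(\rho(\rho-\sigma_{k+1}))$ with an absolute constant $c_0$. For $(\mathrm{II})$, both $\tilde{\mathbf{x}}$ and $\mathbf{x}_{\star}$ are mapped by $\mathbf{A}_k$ to the orthogonal projection of $\mathbf{b}$ onto $\operatorname{range}(\mathbf{U}_A)$, so $\tilde{\mathbf{x}}-\mathbf{x}_{\star}\in\ker(\mathbf{A}_k)$ and, since $\mathbf{x}_{\star}\perp\ker(\mathbf{A}_k)$, $(\mathrm{II})=\|\mathbf{P}_{\ker(\mathbf{A}_k)}\tilde{\mathbf{x}}\|_2$; as $\tilde{\mathbf{x}}\in\operatorname{range}(\mathbf{V})$, this is controlled by the largest principal angle between $\operatorname{range}(\mathbf{V})$ and $\operatorname{range}(\mathbf{V}_A)=\ker(\mathbf{A}_k)^{\perp}$, which I would estimate by a Davis--Kahan / relative-perturbation argument for the top-$k$ eigenspaces of $\mathbf{A}^{\top}\mathbf{S}^{\top}\mathbf{S}\mathbf{A}$ and $\mathbf{A}^{\top}\mathbf{A}$ (whose eigenvalues agree up to a $1\pm\epsilon$ factor and whose spectral gap at index $k$ is positive under the hypotheses), again producing a factor $\sigma_{k+1}/(\rho(\rho-\sigma_{k+1}))$ up to an absolute constant. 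Adding $(\mathrm{I})$ and $(\mathrm{II})$ and absorbing all numerical constants — including $\|\mathbf{x}_{\star}\|_2\le\|\mathbf{b}\|_2/\sigma_k$ and $\rho<\sigma_k$ used along the way — into the single constant $6$ gives the stated bound. The genuine difficulty is $(\mathrm{II})$: the crude estimate $\|\mathbf{A}^{\top}\mathbf{S}^{\top}\mathbf{S}\mathbf{A}-\mathbf{A}^{\top}\mathbf{A}\|_2\le\epsilon\sigma_1^2$ is too weak to feed into Davis--Kahan, and one must exploit that, in the right singular basis of $\mathbf{A}$, the embedding error couples the leading and trailing blocks only through an off-diagonal block of norm $O(\epsilon\sigma_1\sigma_{k+1})$ against a gap of order $\sigma_k^2$; making this land exactly on the advertised $1/(\rho(\rho-\sigma_{k+1}))$ factor is the delicate point.
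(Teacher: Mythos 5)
Your Stages 1 and 2 are correct and in fact reproduce the ingredients of the LSRN analysis that the theorem rests on: the interlacing $\sqrt{1-\epsilon}\,\sigma_i(\mathbf{A})\le\sigma_i(\mathbf{S}\mathbf{A})\le\sqrt{1+\epsilon}\,\sigma_i(\mathbf{A})$ together with the RCOND threshold does give $\tilde k=k$ under the two hypotheses, and $\mathbf{S}\mathbf{A}\mathbf{P}=\mathbf{U}$, $\kappa(\mathbf{A}\mathbf{P})\le\sqrt{(1+\epsilon)/(1-\epsilon)}$, $\hat{\mathbf{x}}=\mathbf{V}(\mathbf{A}\mathbf{V})^{\dagger}\mathbf{b}$ and $\sigma_{\min}(\mathbf{A}\mathbf{V})\ge\rho$ are all sound. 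Be aware, though, that the paper itself does none of this explicitly: its proof is a one-line reduction to Theorem 4.6 of \cite{MSM}, observing that the only property of the random projection used there is the two-sided bound \eqref{equation:3.16}, so the Gaussian matrix $\mathbf{G}$ may be replaced by the count sketch $\mathbf{S}$ verbatim.

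The genuine gap is in Stage 3, precisely where the quantitative content of the theorem lies. Your term $(\mathrm{II})$ is reduced to a principal-angle bound between $\operatorname{range}(\mathbf{V})$ and $\operatorname{range}(\mathbf{V}_A)$, and, as you concede, this bound is not actually established. The concession is warranted: $\|\mathbf{A}^{\top}\mathbf{S}^{\top}\mathbf{S}\mathbf{A}-\mathbf{A}^{\top}\mathbf{A}\|_2$ can be of order $\epsilon\sigma_1^2$, which swamps the spectral gap of order $\sigma_k^2$ when $\sigma_k\ll\sigma_1$; and even the refined block argument you sketch (off-diagonal coupling of order $\epsilon\sigma_1\sigma_{k+1}$ against a gap of order $\sigma_k^2$) delivers an angle of order $\epsilon\sigma_1\sigma_{k+1}/\sigma_k^2$, hence an estimate for $(\mathrm{II})$ carrying an extra factor of order $\epsilon\sigma_1/\sigma_k$ relative to the advertised $\sigma_{k+1}/\bigl(\rho(\rho-\sigma_{k+1})\bigr)$. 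Under the stated hypotheses $\sigma_k$ need only exceed roughly $2c\sigma_1$ with $c$ arbitrarily small, so $\epsilon\sigma_1/\sigma_k$ is not bounded by an absolute constant; the angle route therefore does not obviously close, and it is certainly not closed in your write-up. Relatedly, the final inequality with the specific constant $6$ is obtained by ``absorbing all numerical constants,'' with an unnamed $c_0$ in term $(\mathrm{I})$ — an assertion rather than a derivation. As written, your proposal proves $\tilde k=k$ and the conditioning facts but not the error bound; to match the paper you must either invoke Theorem 4.6 of \cite{MSM} directly (as the authors do) or supply a complete perturbation argument comparing $\hat{\mathbf{x}}=\mathbf{P}(\mathbf{A}\mathbf{P})^{\dagger}\mathbf{b}$ with $\mathbf{x}_{\star}=\mathbf{A}_k^{\dagger}\mathbf{b}$ with explicit constants.
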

\begin{proof}
According to Theorem 4.6 in \cite{MSM} (the error estimate of the LSRN method applied to approximately rank-deficient least squares problems), we only need to replace the random normal projection matrix $\mathbf{G}$ in \cite{MSM} with the count sketch matrix $\mathbf{S}$ to complete the proof.
\end{proof}
\begin{remark}
For $\forall \mathbf{x} \in \mathbb{R}^r$, we have $\mathbf{Px}\in \mathbb{R}^n$ and $\mathbf{APx}\in range(\mathbf{A})$. Thus, following the proof of Theorem \ref{theorem:3.2}, it can similarly be derived that $\kappa(\mathbf{AP})\leq \sqrt{\frac{1+\epsilon}{1-\epsilon}}$.
\end{remark}
\section{Numerical experiments}\label{section:4}
\par
This section provides ample examples to check the performance of the CSQRP-LSQR and CSSVDP-LSQR methods. These least squares problems are ill-conditioned, chosen from the RFM for solving two-dimensional and three-dimensional PDEs, or have infinite condition numbers or rank deficiencies, selected from the SuiteSparse Matrix Collection \cite{DavisHu}.

\subsection{Setup and notation}\label{Subsection:4.1}
All experiments are conducted on an Ubuntu 22.04 operating system equipped with two 64-core AMD 7763 CPU processors. Two-dimensional PDE problems are implemented with Python 3.8, and three-dimensional PDEs are implemented with Eigen \cite{GueJac}, a C++ template library for linear algebra. We compare the CSQRP-LSQR and CSSVDP-LSQR methods with available methods for sparse systems:
\begin{itemize}
  \item CSQR-PLSQR and CSSVD-PLSQR: Count sketch + QR or SVD + LSQR without explicitly computing the preconditioned matrix.
  \item  Householder QR (HHQR) algorithm \cite{Davis} for solving least squares problems.
  \item  SuiteSparseQR (SPQR) \cite{Davis_SPQR}, which utilizes multifrontal sparse QR factorization and serves as a state-of-the-art direct solver for sparse systems.
  \item Randomized iterative methods, including GRK \cite{BaiWu_4}, GRCD\cite{BaiWu_5}, and their variants GBK \cite{NiuZhe}, FGBK \cite{XiaoYin}, AMDCD \cite{ZhangGuo}, and FBCD \cite{ChHu}. We choose the same setup as in \cite{NiuZhe} for $\eta$ in the GBK method and set $p=2$ and $\eta=0.05$ in the FGKB method \cite{XiaoYin}.
  \item  Krylov subspace iterative solvers LSQR \cite{PaSa} and LSMR \cite{FoSa}.
  \item  LSRN \cite{MSM}, which processes the sketched matrix $\mathbf{G}\mathbf{A}$ serially. 
  In LSRN, LSQR is the solver for the preconditioned system in LSRN-LSQR.
\end{itemize}

In all experiments, unless noted otherwise, iterative methods begin with the initial vector $\mathbf{x}^{(0)}=\mathbf{0}$. The termination criteria for GRK, GRCD, GBK, FGBK, AMDCD, and FBCD are the relative residual (RR)
$$
\text{RR}=\frac{\|\mathbf{b}-\mathbf{A}\mathbf{x}^{(k)}\|_{2}}{\|\mathbf{b}\|_{2}}<10^{-8},
$$ or the total number of iteration steps exceeds 200,000, or the computation time surpasses 18,000 seconds. For the latter two cases, the corresponding CPU time or the number of iterations is denoted as ``$--$''. In LSQR and LSMR, $\tau$ is set to be $10^{-8}$ and, unless otherwise specified, the maximum number of iterations is $n$, the number of columns of $\mathbf{A}$. To ease the comparison of different methods, we introduce the following notations:
\begin{itemize}
  \item $\kappa(\cdot)$  denotes the condition number of a matrix.
  \item PCPU denotes the CPU time of the preprocessing part in a method.  
  \item CPU and IT denote the CPU time elapsed and the number of iterations needed, respectively. 
  \item TCPU denotes the total time elapsed by a method.
  \item $\frac{\|\mathbf{r}_k\|_2^2}{\|\mathbf{b}\|_2^2}$ denotes the relative least squares error.
  \item $\mathbf{A}\mathbf{R}^{-1}$ and $\mathbf{AP}$ denote the preconditioned matrices in CSQRP-LSQR and CSSVDP-LSQR, respectively.
\end{itemize}
We use the relative $L^2$  error to measure the difference between approximate solutions and exact solutions for PDE problems.

\subsection{Condition number, oversampling factor, randomized and Krylov methods}
In this subsection, we experimentally determine a reasonable value for the oversampling factor $\gamma$ for the CSQRP-LSQR and CSSVDP-LSQR methods and assess how different values of $\gamma$ affect the  CSQRP-LSQR, CSSVDP-LSQR, CSQR-PLSQR, and CSSVD-PLSQR methods, including the condition number of the preconditioned matrix, accuracy, and convergence rate of the algorithms discussed in Section \ref{subsubsection:4.2.1}. Results of the randomized iterative methods, GRK, GRCD, and their variants, as well as Krylov subspace methods LSQR and LSMR, are presented in Section \ref{subsubsection:4.2.2}.

To ease the description, we put the detailed setup of hyper-parameters, sizes, and condition numbers for different problems in Table \ref{tab:Numerical_parameters_table}, Appendix \ref{my_hyperparameters_RFM}.

\subsubsection{Condition number and oversampling factor}\label{subsubsection:4.2.1}
As stated in Theorem \ref{theorem:3.2}, the condition number $\kappa(\cdot)$ of the preconditioned least squares problem is typically bounded by $\sqrt{\frac{1+\epsilon}{1-\epsilon}}$ when $s=\mathcal{O}\left(n^2\right)$. Practically, the dependence of $s$ on $n$ may not be optimal. Therefore, we conduct experiments to determine a reasonable $\gamma$ for the count sketch matrix. In addition, we compare the accuracy of randomized iterative methods that explicitly compute the preconditioned matrix with those that do not.

\begin{example}\label{example:4.1}
Consider a two-dimensional elasticity problem of the following form
\begin{equation}
  \begin{cases}-\operatorname{div}(\boldsymbol{u}(\mathbf{x}))=\boldsymbol{B}(\mathbf{x}), & \mathbf{x} \in \Omega, \\ \mathbf{\sigma}(\boldsymbol{u}(\mathbf{x})) \cdot \mathbf{n}=N(\mathbf{x}), & \mathbf{x} \in \Gamma_N, \\ \boldsymbol{u}(\mathbf{x}) =\boldsymbol{U}(\mathbf{x}), & \mathbf{x} \in \Gamma_D,\end{cases}
  \end{equation}
where $\mathbf{\sigma}: \mathbb{R}^2 \rightarrow \mathbb{R}^2$ is the stress tensor induced by the displacement field $\boldsymbol{u}: \Omega \rightarrow \mathbb{R}^2, \boldsymbol{B}$ is the body force over $\Omega$, $\boldsymbol{N}$ is the surface force on $\Gamma_N$, $\boldsymbol{U}$ is the displacement on $\Gamma_D$, and $\partial \Omega=\Gamma_N \cup \Gamma_D$. Firstly, following \cite{NRBD}, we consider the Timoshenko beam problem with size $L \times$ $D$, where the free end is subjected to a parabolic traction as shown in Figure \ref{fig:Timoshenko_beam}. The exact displacement field is
\begin{equation}\label{Timoshenko_beam_exact_displacement_solution}
  \left\{\begin{array}{l}
  u=-\frac{P y}{6 E I}\left[(6 L-3 x) x+(2+v)\left(y^2-\frac{D^2}{4}\right)\right], \\
  v=\frac{P}{6 E I}\left[3 v y^2(L-x)+(4+5 v) \frac{D^2 x}{4}+(3 L-x) x^2\right],
  \end{array}\right.
  \end{equation}
where $I=\frac{D^3}{12}$. Homogeneous Dirichlet boundary conditions are applied on the left boundary at $x=0$, and homogeneous Neumann boundary conditions are applied on all other boundaries. The material parameters specified are as follows: Young's modulus $E = 3 \times 10^7\mathrm{Pa}$ and Poisson's ratio $\nu = 0.3$. Dimensions are set with $D = 10$ and $L = 10$, and a shear force of $P = 1000 \mathrm{Pa}$ is applied. 
\end{example}

\begin{example}\label{example:4.2}
  Consider the Stokes flow problem
  \begin{equation}\label{Stokes flow}
    \begin{cases}-\Delta \boldsymbol{u}(\mathbf{x})+\nabla p(\mathbf{x})=\boldsymbol{f}(\mathbf{x}), & \mathbf{x} \in \Omega, \\ \nabla \cdot \boldsymbol{u}(\mathbf{x})=0, & \mathbf{x} \in \Omega, \\ \boldsymbol{u}(\mathbf{x})=\boldsymbol{U}(\mathbf{x}), & \mathbf{x} \in \partial \Omega .\end{cases}
    \end{equation}
    In this problem, $p$ is only determined up to a constant. To avoid difficulties, we fix the value of $p$ at the left-bottom corner.
    We consider \eqref{Stokes flow} with an explicit solution
    and inhomogeneous boundary, where $\Omega$ is the square $(0,1)\times (0,1)$ with three holes centered at $(0.5,0.2),(0.2,0.8),(0.8,0.8)$ of radius 0.1. The exact displacement field for the Stokes flow is given by 
    \begin{equation}\label{Stokes_flow_solution}
      \left\{\begin{array}{l}
      u=x+x^2-2 x y+x^3-3 x y^2+x^2 y, \\
      v=-y-2 x y+y^2-3 x^2 y+y^3-x y^2, \\
      p=x y+x+y+x^3 y^2-\frac{4}{3} .
      \end{array}\right.
    \end{equation}
  \end{example}

\begin{example}\label{example:4.3}
Consider the two-dimensional elasticity problem over the domain $\Omega$ characterized as a square $(0,8) \times (0,8)$, containing 40 holes with radii ranging from 0.3 to 0.6. It is important to note the presence of a cluster of closely spaced holes, highlighted in the inset. The exact displacement field for the two-dimensional elasticity problem with complex
geometry shown in \ref{fig:complex_geometry_domain_elasticity} is 
\begin{equation}\label{two-dimensional_elasticity_problem_exact_displacement_field}
  \begin{aligned}
    & u(x, y)=\frac{1}{10} y((x+10) \sin y+(y+5) \cos x), \\
    & v(x, y)=\frac{1}{60} y\left((30+5 x \sin (5 x))\left(4+e^{-5 y}\right)-100\right) .
    \end{aligned}
\end{equation}
Dirichlet boundary condition is imposed on the lower boundary $y=0$, while Neumann boundary conditions are applied to the remaining boundaries and the internal holes. The material parameters specified are: Young's modulus $E=3 \times 10^7 \mathrm{~Pa}$ and Poisson's ratio $v=0.3$.
\end{example}

We use ill-conditioned least squares problems obtained from applying the RFM for solving Examples \ref{example:4.1}, \ref{example:4.2}, and \ref{example:4.3} to illustrate the effect of the oversampling factor $\gamma$, where $\gamma \in \left\{2, 2.5, 3, 3.5, 4\right\}$. Table \ref{tab:Numerical_parameters_table} lists these problems with condition numbers of $9.1689 \times 10^{13}$, $8.2740 \times 10^{17}$, $1.8662 \times 10^{13}$, and $5.5545 \times 10^{12}$, and corresponding sizes of $139,000 \times 10,000$, $169,564 \times 14,400$, $122,238 \times 9,000$, and $140,060 \times 10,800$. The condition numbers of $\mathbf{A} \mathbf{R}^{-1}$ and $\mathbf{AP}$ are recorded in Tables \ref{Timoshenko_beam_table}, \ref{complex_geometric_2D_Stokes_flow_problem}, and \ref{complex_elasticity_table}, with ranges of $3.5473\sim6.8624$ and $3.4068\sim6.2002$, $4.0634\sim6.5427$ and $3.9192\sim6.3172$, $3.7788\sim6.4491$ and $3.7711\sim6.3951$, respectively. In Table \ref{CSSVD-PLSQR_QR_SVD_for_Stokes_flow}, the condition numbers of $\mathbf{A P}$ range from $2.8009$ to $5.6533$. These data indicate that even with a linear sampling size (although the theoretical bound is quadratic), our methods can successfully transform ill-conditioned problems into well-conditioned ones during the preconditioning stage.

\begin{figure}[!htbp]
  \renewcommand\figurename{Figure}
    \centering
    \includegraphics[scale=0.6]{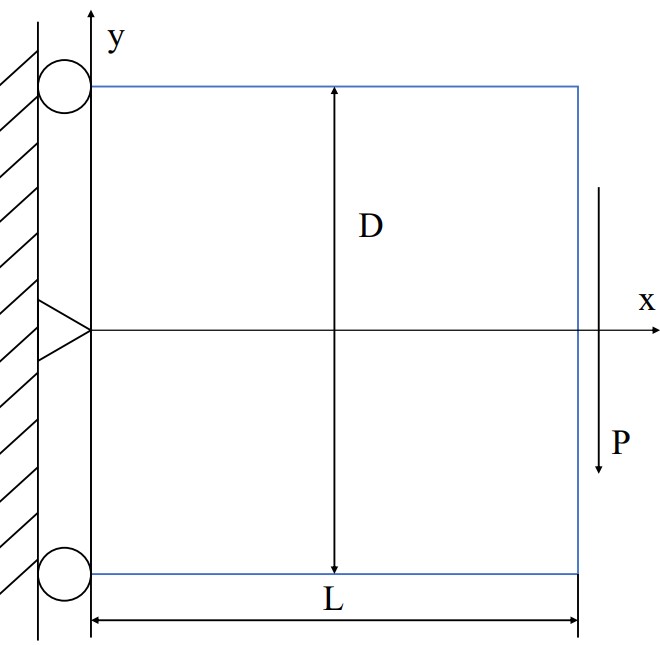}
  \caption{The Timoshenko beam problem.}
  \label{fig:Timoshenko_beam}
\end{figure}
\begin{figure}[!htbp]
  \renewcommand\figurename{Figure}
    \centering
    \includegraphics[scale=0.30]{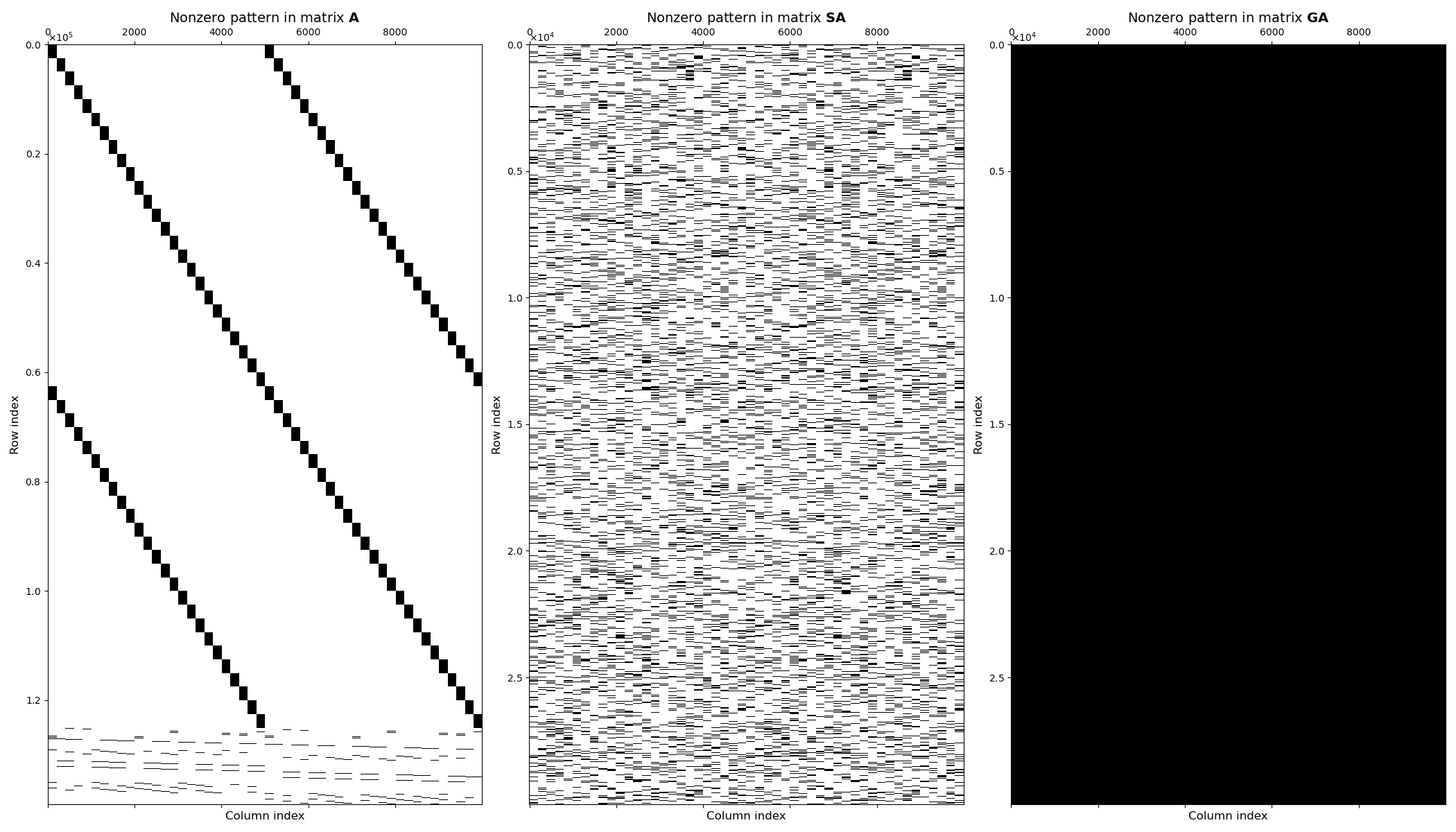}
  \caption{Sparsity pattern. Left: $\mathbf{A}$, middle: $\mathbf{S}\mathbf{A}$, right: $\mathbf{G}\mathbf{A}$.}\label{fig:count_sketch_pattern_in_A_and_B_gamma_3}
\end{figure}
\begin{figure}[!htbp]
  \renewcommand\figurename{Figure}
    \centering
    \includegraphics[scale=0.28]{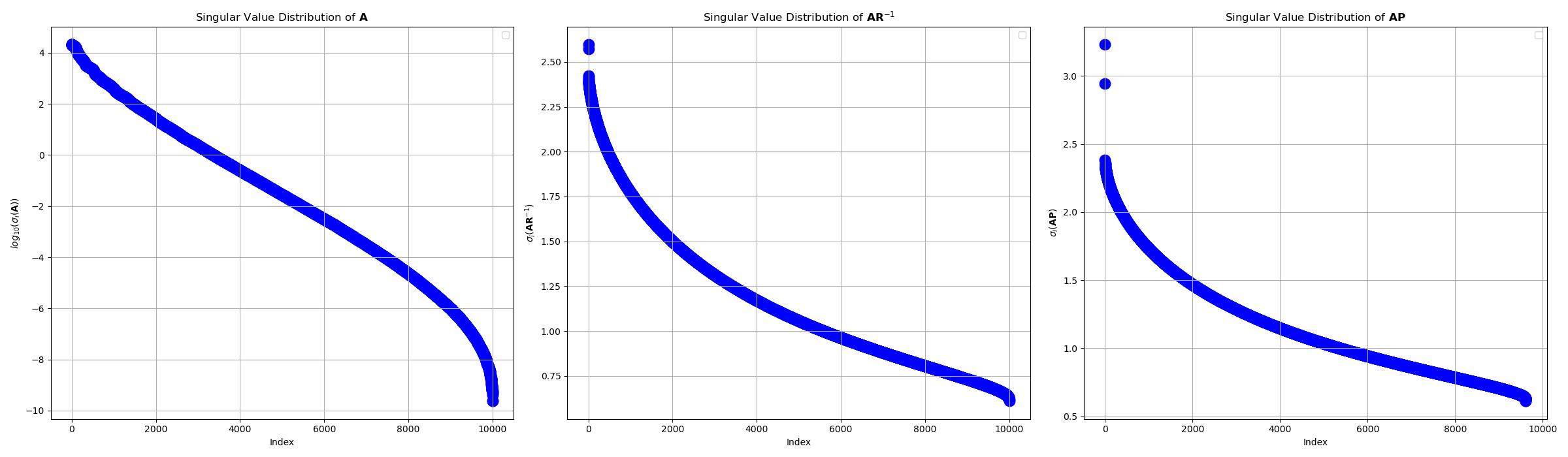}
  \caption{Singular value distribution of  $\mathbf{A}$, $\mathbf{A}\mathbf{R}^{-1}$, and $\mathbf{AP}$ for the Timoshenko beam problem.}\label{fig:count_sketch_pattern_in_A_and_B_singular_values_gamma_3}
\end{figure}

Tables \ref{Timoshenko_beam_table} and \ref{CSSVD-PLSQR_QR_SVD_for_Stokes_flow} show that the relative errors of the LSRN-LSQR and CSSVD-PLSQR methods range from $10^{-13} \sim 10^{-12}$, with the CSQRP-LSQR method achieving around $10^{-13}$ and the CSSVDP-LSQR method reaching $10^{-15}$ in Table \ref{CSSVD-PLSQR_QR_SVD_for_Stokes_flow} and $10^{-17}$ in Table \ref{Timoshenko_beam_table}. The higher accuracy of CSQRP-LSQR and CSSVDPLSQR methods is due to the explicit computation of the preconditioned matrix, enabling LSQR to effectively solve well-conditioned problems. Table \ref{complex_geometric_2D_Stokes_flow_problem} further demonstrates that methods with explicit preconditioning achieve better accuracy than those without.

\begin{table}[!htbp]
  \begin{center}
  \caption{Comparison of the LSRN-LSQR, CSQR-PLSQR, CSQRP-LSQR, CSSVD-PLSQR, and CSSVDP-LSQR methods with different $\gamma$ values for the Timoshenko beam problem.}\label{Timoshenko_beam_table}%
  \setlength{\tabcolsep}{3.50mm}  
  \begin{tabular}{lllllll}
\toprule
Method &$\gamma $ & $2$ & $2.5$ & $3$ & $3.5$ & $4$\\
\midrule
        LSRN-LSQR &PCPU  & $174.95$ & $213.08$ & $241.42$ & $270.64$ & $284.84$\\
        &CPU        & $ 62.55$ & $ 49.45$ & $53.52$ & $48.63$ & $33.75$\\
        &TCPU       & $237.50$ & $262.53$ & $294.94$ & $319.27$ & $318.59$\\
        &IT         & $81$ & $63$ & $54$ & $48$ & $ 44$\\
        &$\frac{\|\mathbf{r}_k\|_2^2}{\|\mathbf{b}\|_2^2}$ & $1.57e-12$ & $1.02e-12$ & $ 8.11e-13$ & $6.06e-13$ & $4.85e-13$\\
\midrule
CSQR-PLSQR &PCPU  & $32.82$ & $35.89$ & $37.74$ & $42.40$ & $48.34$\\
        &CPU         & $ 68.94$ & $58.65$ & $45.68$ & $ 41.17$ & $34.12$\\
        &TCPU       & $101.76$ & $94.54$ & $83.42$ & $83.57$ & $82.46$\\
        &IT         & $79$ & $66$ & $54$ & $48$ & $44$\\
        &$\frac{\|\mathbf{r}_k\|_2^2}{\|\mathbf{b}\|_2^2}$  & $7.55e-08$ & $1.37e-08$ & $2.04e-08$ & $1.63e-08$ & $1.72e-08$\\
        \midrule
 CSQRP-LSQR &$\kappa(\mathbf{A}\mathbf{R}^{-1})$ & $4.2651$ & $5.9474$ & $4.2672$ & $3.8820$ & $3.5473$\\
          &PCPU       & $50.88$ & $53.84$ & $56.08$ & $61.82$ & $65.06$\\
          &CPU        & $58.14$ & $47.58$ & $38.28$ & $ 35.55$ & $32.71$\\
          &TCPU       & $109.02$ & $101.42$ & $94.36$ & $97.37$ & $97.77$\\
          &IT         & $82$ & $67$ & $56$ & $50$ & $46$\\
            &$\frac{\|\mathbf{r}_k\|_2^2}{\|\mathbf{b}\|_2^2}$ & $5.62e-13$ & $3.29e-13$ & $2.96e-13$ & $2.72e-13$ & $1.66e-13$\\
          &$u$ error & $ 9.98e-06$ & $3.90e-06$ & $4.56e-06$ & $2.78e-06$ & $2.73e-06$\\
          &$v$ error & $ 1.35e-06$ & $5.23e-07$ & $ 2.65e-07$ & $2.23e-07$ & $2.75e-07$\\
          &$\sigma_x$ error & $1.12e-05$ & $ 7.92e-06$ & $4.67e-06$ & $5.55e-06$ & $ 3.94e-06$\\
          &$\tau_{xy}$ error & $4.13e-06$ & $3.02e-06$ & $2.69e-06$ & $ 2.75e-06$ & $2.00e-06$\\
\midrule
CSSVD-PLSQR &PCPU & $73.27$ & $79.07$ & $83.74$ & $97.79$ & $106.30$\\
                  &CPU         & $66.06$ & $ 55.22$ & $ 44.98$ & $40.97$ & $37.57$\\
                  &TCPU       & $139.33$ & $134.29$ & $128.72$ & $138.76$ & $143.87$\\
                  &IT         & $86$ & $71$ & $59$ & $53$ & $49$\\
                  &$\frac{\|\mathbf{r}_k\|_2^2}{\|\mathbf{b}\|_2^2}$  & $1.18e-12$ & $1.21e-12$ & $6.95e-13$ & $6.08e-13$ & $5.27e-13$\\
\midrule
CSSVDP-LSQR &$\kappa(\mathbf{A}\mathbf{P})$ & $5.2875$ & $5.4989$ & $4.0863$ & $3.7168$ & $3.4068$\\
            &PCPU       & $94.21$ & $101.17$ & $111.23$ & $124.36$ & $128.54$\\
            &CPU        & $37.08$ & $25.45$ & $21.83$ & $19.22$ & $17.94$\\
            &TCPU        & $131.29$ & $126.62$ & $133.06$ & $143.58$ & $146.48$\\
            &IT         & $47$ & $37$ & $32$ & $28$ & $26$\\
            &$\frac{\|\mathbf{r}_k\|_2^2}{\|\mathbf{b}\|_2^2}$ & $5.71e-17$ & $5.50e-17$ & $1.77e-17$ & $1.84e-17$ & $1.34e-17$\\
            &$u$ error & $3.12e-09$ & $ 1.01e-08$ & $6.48e-09$ & $1.31e-08$ & $6.90e-09$\\
            &$v$ error & $3.18e-09$ & $7.57e-09$ & $1.41e-09$ & $1.45e-08$ & $7.67e-09$\\
            &$\sigma_x$ error & $1.03e-08$ & $1.35e-08$ & $9.58e-09$ & $1.37e-08$ & $7.99e-09$\\
            &$\tau_{xy}$ error & $1.30e-08$ & $1.52e-08$ & $9.17e-09$ & $1.74e-08$ & $1.08e-08$\\
            \bottomrule
  \end{tabular}
  \end{center}
\end{table}
\begin{table}[!htbp]
  \begin{center}
  \caption{Comparison of the LSRN-LSQR, CSSVD-PLSQR, CSSVDP-LSQR methods with $\gamma=2,2.5,3,3.5,4$ for the Stokes flow problem.}\label{CSSVD-PLSQR_QR_SVD_for_Stokes_flow}%
  \setlength{\tabcolsep}{3.50mm}  
  \begin{tabular}{llllllll}
\toprule
Method & $\kappa(\mathbf{A}\mathbf{P})$ & TCPU & IT & $\frac{\|\mathbf{r}_k\|_2^2}{\|\mathbf{b}\|_2^2}$ & $u$ error & $v$ error & $p$ error \\
\midrule
LSRN-LSQR & $--$   & $417.04$  & $56$ & $ 1.67e-12$ & $ 3.87e-07$ & $2.26e-07$ & $3.48e-05$ \\
& $--$    & $477.06$  & $47$ & $1.23e-12$ & $2.98e-07$ & $1.80e-07$ & $4.45e-05$ \\
& $--$    & $507.78$  & $42$ & $8.23e-13$ & $2.96e-07$ & $1.54e-07 $ & $5.80e-05$ \\
& $--$    & $561.60$  & $38$ & $7.34e-13$ & $2.70e-07$ & $1.59e-07$ & $4.59e-05$ \\
& $--$    & $683.08$  & $36$ & $3.86e-13$ & $1.95e-07$ & $1.38e-07$ & $3.79e-05$ \\
\midrule
CSSVD-PLSQR & $--$   & $314.12$  & $64$ & $ 1.08e-12$ & $4.92e-07$ & $1.90e-07$ & $3.95e-05$ \\
& $--$    & $285.86$  & $49$ & $ 1.60e-12$ & $ 3.22e-07$ & $2.06e-07$ & $9.38e-05$ \\
& $--$    & $302.69$  & $53$ & $ 6.80e-13$ & $2.93e-07 $ & $1.33e-07$ & $2.89e-05$ \\
& $--$    & $307.96$  & $52$ & $5.03e-13$ & $2.45e-07$ & $1.78e-07$ & $5.04e-05$ \\
& $--$    & $314.06$  & $40$ & $ 9.39e-13$ & $3.25e-07$ & $1.47e-07$ & $2.49e-05$ \\
\midrule
CSSVDP-LSQR & $5.2884$    & $251.68$  & $33$ & $ 8.27e-15$ & $ 1.57e-08 $ & $ 8.32e-09 $ & $1.37e-05$ \\
&$3.3746$     & $257.01$  & $27$ & $3.64e-15$ & $1.26e-08$ & $ 6.61e-09$ & $7.45e-07$ \\
&$4.3693$     & $261.39$  & $26$ & $ 5.75e-15$ & $1.29e-08$ & $8.13e-09$ & $1.32e-05$ \\
&$5.6533$     & $266.60$  & $26$ & $6.42e-15$ & $1.62e-08 $ & $7.45e-09$ & $3.63e-05$ \\
&$2.8009$     & $280.19$  & $21$ & $ 2.77e-15$ & $8.31e-09$ & $5.93e-09$ & $1.64e-05$ \\
\bottomrule
\end{tabular}
\end{center}
\end{table}

\begin{figure}[!htbp]
  \renewcommand\figurename{Figure}
    \centering
    \includegraphics[scale=0.28]{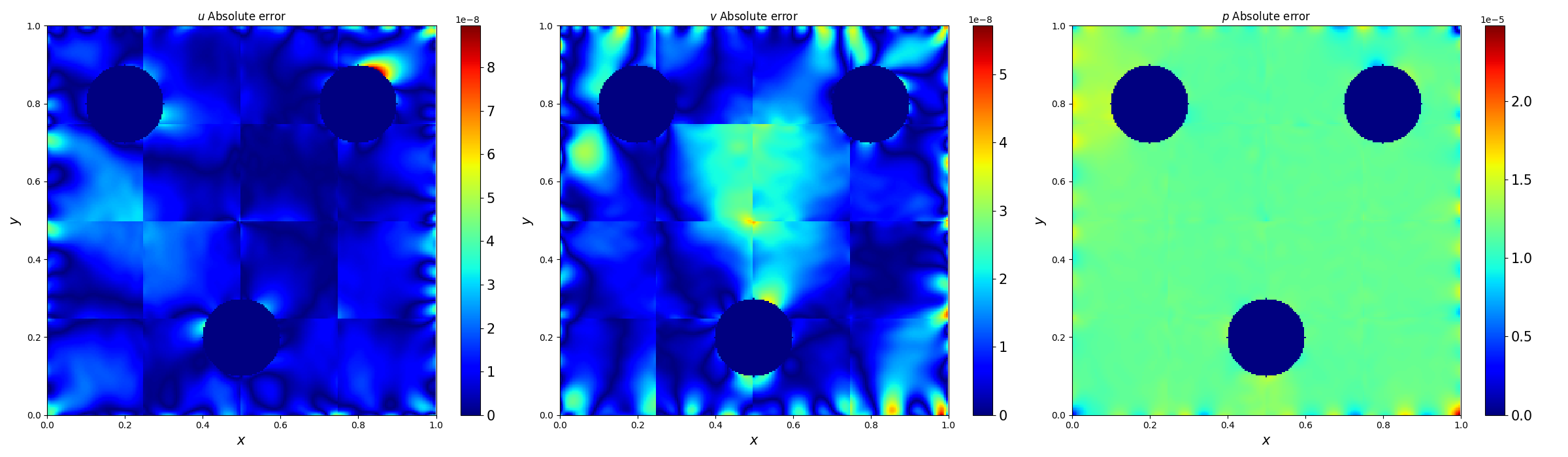}
  \caption{Distribution of absolute errors in $u$, $v$, and $p$ for two-dimensional Stokes flow problem using the CSSVDP-LSQR method with $\gamma=3$.}
  \label{fig:RFM_2dFluid_3}
\end{figure}

For all methods, as $r$ increases, PCPU gradually increases, while CPU, IT, and the condition number of the preconditioned matrix decrease. SVD-based preconditioning methods, such as the CSSVDP-LSQR method, require longer PCPU times due to the higher computational complexity of SVD compared to QR decomposition, making them more time-consuming than the CSQRP-LSQR method. However, the CSSVDP-LSQR method offers superior accuracy and requires fewer iterations; for example, its errors in $u, v, \sigma_x$, and $\tau_{x y}$ are significantly smaller than those in the CSQRP-LSQR method (see Table \ref{Timoshenko_beam_table}). The shear stress $\sigma_y$ is zero and hence omitted from the analysis. For the Stokes flow problem (see Table \ref{CSSVD-PLSQR_QR_SVD_for_Stokes_flow} ), the LSRN-LSQR and CSSVD-PLSQR methods yield errors of approximately $10^{-7}$ in $u, 10^{-7}$ in $v$, and $10^{-5}$ in $p$; whereas the CSSVDP-LSQR method produces errors of $10^{-8}$ in $u, 10^{-9}$ in $v$, and $10^{-5}$ in $p$. Hence, randomized methods that explicitly form the preconditioned matrix achieve higher accuracy, as similarly observed in Table  \ref{complex_geometric_2D_Stokes_flow_problem}. Additionally, in terms of computational efficiency, Tables  Tables \ref{Timoshenko_beam_table}, \ref{CSSVD-PLSQR_QR_SVD_for_Stokes_flow}, and \ref{complex_geometric_2D_Stokes_flow_problem} show that although these algorithms require more preprocessing time, they have fewer iterations, resulting in a shorter time in total. Table \ref{complex_elasticity_table} compares the CSQRP-LSQR and CSSVDP-LSQR methods with different $\gamma$ values for the elasticity problem over a complex geometry in Example \ref{example:4.3}.  Both methods achieve relative errors up to $10^{-11}$ and errors for $u, v, \sigma_x, \sigma_y$, and $\tau_{x y}$ generally between $10^{-6}$ and $10^{-5}$.

\begin{table}[!htbp]
  \begin{center}
  \caption{Comparison of the LSRN-LSQR, CSQR-PLSQR, CSQRP-LSQR, CSSVD-PLSQR, and CSSVDP-LSQR methods with different $\gamma$ values for the Stokes flow over a complex geometry.}\label{complex_geometric_2D_Stokes_flow_problem}%
  \setlength{\tabcolsep}{3.50mm}  
  \begin{tabular}{lllllll}
\toprule
Method &$\gamma $ & $2$ & $2.5$ & $3$ & $3.5$ & $4$\\
\midrule
          LSRN-LSQR  &PCPU  & $139.48$ & $163.26$ & $174.51$ & $195.69$ & $217.82$\\
          &CPU         & $74.58$ & $42.24$ & $ 37.61$ & $31.05$ & $29.35$\\
          &TCPU        & $213.06$ & $205.50$ & $212.12$ & $226.74$ & $247.17$\\
          &IT         & $87$ & $68$ & $58$ & $51$ & $46$\\
          &$\frac{\|\mathbf{r}_k\|_2^2}{\|\mathbf{b}\|_2^2}$ & $ 1.55e-13$ & $ 1.06e-13$ & $ 7.10e-14$ & $ 4.55e-14$ & $4.57e-14$\\
\midrule
          CSQR-PLSQR &PCPU  & $27.43$ & $33.49$ & $36.05$ & $40.90$ & $45.43$\\
          &CPU         & $69.32$ & $49.78$ & $38.33$ & $37.09$ & $35.70$\\
          &TCPU       & $96.75$ & $82.27$ & $74.38$ & $79.99$ & $81.13$\\
          &IT         & $87$ & $74$ & $63$ & $61$ & $53$\\
          &$\frac{\|\mathbf{r}_k\|_2^2}{\|\mathbf{b}\|_2^2}$  &$3.50e-10$ & $ 2.27e-10$ & $ 8.60e-11$ & $2.40e-11$ & $5.16e-11$\\
\midrule
 CSQRP-LSQR &$\kappa(\mathbf{A}\mathbf{R}^{-1})$ & $6.4527$ & $5.5251$ & $5.0128$ & $5.4155$ & $4.0634$\\
            &PCPU        & $35.63$ & $41.88$ & $45.67$ & $48.33$ & $51.03$\\
            &CPU         & $27.28$ & $25.42$ & $19.90$ & $18.19$ & $16.40$\\
            &TCPU        & $62.91$ & $67.30$ & $65.57$ & $66.52$ & $67.43$\\
            &IT         & $48$ & $40$ & $35$ & $32$ & $29$\\
            &$\frac{\|\mathbf{r}_k\|_2^2}{\|\mathbf{b}\|_2^2}$  & $3.03e-14$ & $1.20e-14$ & $8.65e-15$ & $1.31e-14$ & $7.74e-15$\\
            &$u$ error  & $1.51e-07$ & $2.19e-07$ & $1.67e-07$ & $7.16e-08$ & $8.47e-08$\\
            &$v$ error & $2.77e-08$ & $2.63e-08$ & $2.36e-08$ & $3.05e-08$ & $2.12e-08$\\
            &$p$ error  & $2.86e-05$ & $3.20e-06$ & $7.01e-06$ & $1.65e-05$ & $9.97e-06$\\
\midrule
CSSVD-PLSQR &PCPU  & $48.57$ & $56.21$ & $61.69$ & $70.95$ & $75.38$\\
            &CPU         & $53.67$ & $47.04$ & $39.91$ & $32.06$ & $ 34.68$\\
            &TCPU       & $102.24$ & $103.25$ & $101.60$ & $103.01$ & $110.06$\\
            &IT         & $88$ & $77$ & $65$ & $53$ & $56$\\
            &$\frac{\|\mathbf{r}_k\|_2^2}{\|\mathbf{b}\|_2^2}$  &$1.36e-13$ & $ 8.47e-14$ & $ 6.48e-14$ & $ 6.95e-14$ & $ 5.46e-14$\\
\midrule
CSSVDP-LSQR &$\kappa(\mathbf{A}\mathbf{P})$ & $6.3173$ & $5.2832$ & $4.9515$ & $5.3320$ & $3.9192$\\
            &PCPU        & $57.54$ & $66.32$ & $72.65$ & $83.57$ & $85.68$\\
            &CPU         & $26.11$ & $21.66$ & $19.07$ & $ 17.88$ & $16.12$\\
            &TCPU        & $83.65$ & $87.98$ & $91.72$ & $101.45$ & $101.80$\\
            &IT         & $47$ & $39$ & $34$ & $32$ & $29$\\
            &$\frac{\|\mathbf{r}_k\|_2^2}{\|\mathbf{b}\|_2^2}$   & $3.01e-14$ & $1.28e-14$ & $1.13e-14$ & $1.00e-14$ & $6.84e-15$\\
            &$u$ error  & $2.95e-08$ & $ 1.70e-08$ & $ 1.79e-08$ & $1.87e-08$ & $9.23e-09$\\
            &$v$ error & $1.76e-08$ & $ 1.20e-08$ & $1.19e-08$ & $1.09e-08$ & $5.87e-09$\\
            &$p$ error  & $1.56e-05$ & $9.70e-06$ & $8.42e-06$ & $9.92e-06$ & $5.03e-06$\\
  \bottomrule
  \end{tabular}
  \end{center}
\end{table}

Figure \ref{fig:count_sketch_pattern_in_A_and_B_gamma_3} shows the sparsity patterns of matrices related to this problem: the coefficient matrix $\mathbf{A}$ exhibits a structured sparse pattern; $\mathbf{S}\mathbf{A}$, using a count sketch matrix, maintains this sparsity; whereas $\mathbf{G}\mathbf{A}$, with a Gaussian random matrix, becomes fully dense. This demonstrates that using a count sketch matrix in large-scale sparse problems can conserve memory, unlike the Gaussian matrix used in LSRN, which leads to increased density. Moreover, when computing $\mathbf{S}\mathbf{A}$, it is not necessary to explicitly form $\mathbf{S}$.  Figure \ref{fig:count_sketch_pattern_in_A_and_B_singular_values_gamma_3} indicates that the singular values of preconditioned matrices $\mathbf{A R}^{-1}$ and $\mathbf{A P}$ are more closely clustered and have a narrower range than those of $\mathbf{A}$, suggesting better conditioning and enhanced numerical stability and convergence for methods like CSQRP-LSQR and CSSVDP-LSQR.

For the above three examples, we evaluate the preconditioning effectiveness of these methods using a count sketch matrix with row counts $s=\gamma n$ when $\gamma=2,2.5,3,3.5,4$. It is found that $\gamma=3$ yields nearly optimal total time and accuracy for most randomized iterative methods. Thus, $\gamma=3$ is used in all subsequent experiments as a generally reasonable choice.
\begin{table}[!htbp]
  \begin{center}
  \caption{Comparison of the CSQRP-LSQR and CSSVDP-LSQR methods with different $\gamma$ values for the elasticity problem over a complex geometry.}\label{complex_elasticity_table}%
  \setlength{\tabcolsep}{3.50mm}  
  \begin{tabular}{lllllll}
\toprule
Method &$\gamma $ & $2$ & $2.5$ & $3$ & $3.5$ & $4$\\
\midrule
 CSQRP-LSQR &$\kappa(\mathbf{A}\mathbf{R}^{-1})$  & $6.4491$ & $5.5522$ & $4.8243$ & $4.2486$ & $3.7788$\\
            &PCPU        & $60.68$ & $63.27$ & $65.38$ & $69.15$ & $73.83$\\
            &CPU         & $67.18$ & $50.02$ & $44.07$ & $41.14$ & $38.29$\\
            &TCPU        & $127.86$ & $113.29$ & $109.45$ & $110.29$ & $112.12$\\
            &IT          & $88$ & $69$ & $61$ & $53$ & $50$\\
            &$\frac{\|\mathbf{r}_k\|_2^2}{\|\mathbf{b}\|_2^2}$  & $5.31e-11$ & $5.23e-11$ & $5.15e-11$ & $5.16e-11$ & $5.14e-11$\\
            &$u$ error & $4.00e-05$ & $3.12e-05$ & $4.00e-05$ & $ 3.73e-05$ & $2.12e-05$\\
            &$v$ error & $3.01e-05$ & $2.12e-05$ & $2.20e-05$ & $2.13e-05$ & $ 2.32e-05$\\
            &$\sigma_x$ error & $2.77e-05$ & $2.94e-05$ & $3.26e-05$ & $2.74e-05$ & $2.32e-05$\\
            &$\sigma_y$ error & $5.78e-05$ & $6.09e-05$ & $5.93e-05$ & $5.52e-05$ & $5.27e-05$\\
            &$\tau_{xy}$ error & $8.62e-06$ & $7.37e-06$ & $7.69e-06$ & $8.83e-06$ & $7.35e-06$\\
\midrule
CSSVDP-LSQR &$\kappa(\mathbf{A}\mathbf{P})$ &$6.3951$ & $5.5258$ & $4.8064$ & $4.2287$ & $3.7711$\\
            &PCPU        & $130.85$ & $143.98$ & $150.09$ & $158.66$ & $163.64$\\
            &CPU         & $67.21$ & $55.41$ & $46.26$ & $ 39.92$ & $38.70$\\
            &TCPU        & $199.06$ & $199.39$ & $196.35$ & $198.58$ & $202.34$\\
            &IT          & $88$ & $69$ & $61$ & $52$ & $50$\\
            &$\frac{\|\mathbf{r}_k\|_2^2}{\|\mathbf{b}\|_2^2}$  & $5.20e-11$ & $5.18e-11$ & $ 5.18e-11$ & $5.17e-11$ & $5.17e-11$\\
            &$u$ error & $1.82e-05$ & $1.77e-05$ & $7.12e-06$ & $1.67e-05$ & $1.67e-05$\\
            &$v$ error & $2.30e-05$ & $2.31e-05$ & $2.25e-05$ & $2.28e-05$ & $2.28e-05$\\
            &$\sigma_x$ error & $2.05e-05$ & $ 2.04e-05$ & $2.04e-05$ & $2.05e-05$ & $2.05e-05$\\
            &$\sigma_y$ error & $5.24e-05$ & $5.23e-05$ & $ 5.23e-05$ & $5.23e-05$ & $5.24e-05$\\
            &$\tau_{xy}$ error & $7.12e-06$ & $7.11e-06$ & $7.12e-06$ & $7.12e-06$ & $7.13e-06$\\
  \bottomrule
  \end{tabular}
  \end{center}
\end{table}
\begin{figure}[!htbp]
  \renewcommand\figurename{Figure}
    \centering
    \includegraphics[scale=0.6]{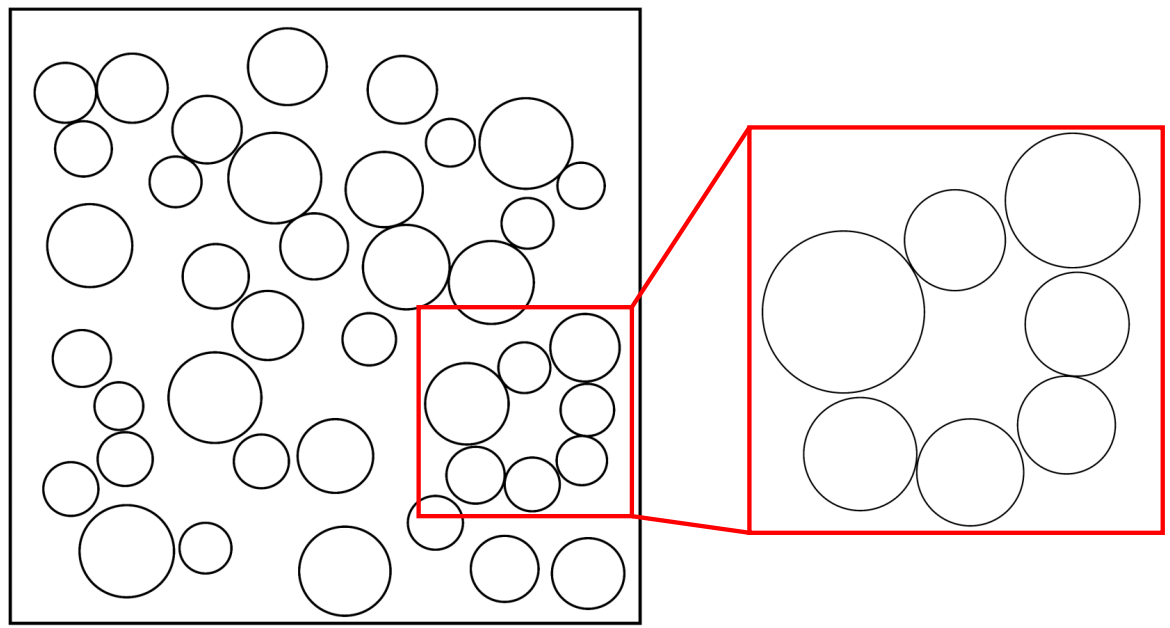}
  \caption{A two-dimensional complex geometry.}
  \label{fig:complex_geometry_domain_elasticity}
\end{figure}

\subsubsection{Results of randomized iterative and Krylov subspace methods} \label{subsubsection:4.2.2}

In this section, we demonstrate that methods including GRK, GRCD, and their variants, as well as LSQR and LSMR, are unsuitable to solve such least squares problems due to their low accuracy and long execution times. Specifically, for the elasticity problem (see Table \ref{Randomized_iterative_LSQR_LSMR_for_elasticity_numerical_results}), LSQR and LSMR methods reach their iteration limits after over 7000 seconds, with relative errors around $10^{-4}$ and errors in $u, v, \sigma_x$, and $\tau_{x y}$ around $10^{-3}$ to $10^{-2}$. Similarly, in the Stokes flow problem (see Table \ref{Randomized_iterative_LSQR_LSMR_for_Stokes_flow}), both methods hit their maximum iterations of 14,000, with running times exceeding 17,000 seconds, achieving relative errors around $10^{-7}$ and errors in $u, v$, and $p$ at $10^{-4}, 10^{-5}$, and $10^{-2}$, respectively. The numerical performance of Kaczmarz-type methods (GRK, GBK, FGBK) and coordinate descent methods (GRCD, AMDCD, FBCD) is even worse. For example, FGBK takes over 18,000 seconds with a relative error of 0.7316, while GRCD exceeds 200,000 iterations, resulting in a relative error of $2.2631 \times 10^{-3}$ and errors around $10^{-1}$.

\begin{table}[!htbp]
  \begin{center}
  \caption{Comparison of the LSQR, LSMR, GRK, GBK, FGBK, GRCD, AMDCD, and FBCD methods for the Timoshenko beam problem.}\label{Randomized_iterative_LSQR_LSMR_for_elasticity_numerical_results}%
  \setlength{\tabcolsep}{2.50mm}  
  \begin{tabular}{llllllll}
\toprule
Method & TCPU & IT & $\frac{\|\mathbf{r}_k\|_2^2}{\|\mathbf{b}\|_2^2}$ & $u$ error & $v$ error & $\sigma_x$ error & $\tau_{xy}$ error\\
\midrule
LSQR  &$7088.21$  & $10000$ & $1.70e-04$ & $8.12e-03$ & $8.12e-03$ & $7.67e-03$ & $7.51e-03$\\
LSMR  &$ 7100.33$  & $10000$ & $2.38e-04$ & $1.87e-02$ & $1.90e-02$ & $ 1.76e-02$ & $1.77e-02$\\
GRK &$>5h$  & $--$ & $2.93$ & $  1.03e+00$ & $ 1.01e+00$ & $9.65e-01$ & $9.76e-01$\\
GBK &$>5h$  & $--$ & $ 2.61$ & $1.02e+00$ & $1.01e+00$ & $9.72e-01$ & $1.01e+00$\\
FGBK &$>5h$  & $--$ & $ 0.73$ & $1.04e+00$ & $ 1.00e+00$ & $  9.74e-01$ & $ 9.28e-01$\\
GRCD &$--$  & $>200000$ & $2.26e-03$ & $ 1.03e+00$ & $ 9.63e-01$ & $ 9.49e-01$ & $ 8.28e-01$\\
AMDCD &$--$ & $>200000$  & $5001.03$ & $1.49e+01$ & $4.45e+01$ & $9.26e+01$ & $1.71e+02$ \\
FBCD &$>5h$  & $--$ & $3.29e-02$ & $1.05e+00$ & $9.74e-01$ & $ 9.66e-01$ & $8.35e-01$\\
\bottomrule
\end{tabular}
\end{center}
\end{table}

\begin{table}[!htbp]
  \begin{center}
  \caption{Comparison of LSQR, LSMR,  GRK, GBK, FGBK, GRCD, AMDCD, and FBCD methods for the Stokes flow problem.}\label{Randomized_iterative_LSQR_LSMR_for_Stokes_flow}%
  \setlength{\tabcolsep}{2.50mm}  
  \begin{tabular}{llllllll}
\toprule
Method & $\kappa(\mathbf{A}\mathbf{P})$ & TCPU & IT & $\frac{\|\mathbf{r}_k\|_2^2}{\|\mathbf{b}\|_2^2}$ & $u$ error & $v$ error & $p$ error \\
\midrule
LSQR  &$--$  & $17607.62$  & $14400$ & $3.36e-07$ & $1.05e-04$ & $ 8.09e-05$ & $2.36e-02$ \\
LSMR  &$--$  & $17735.96$  & $ 14400$ & $4.50e-07$ & $ 8.22e-05$ & $5.89e-05$ & $1.72e-02$ \\
GRK  &$--$   & $>5h$  & $--$ & $1.19e+00$ & $2.64e-01$ & $3.14e-01$ & $9.27e-01$ \\
GBK &$--$    & $>5h$ & $--$ & $2.10e-01$ & $9.82e-02$ & $9.22e-02$ & $ 9.72e-01$\\
FGBK &$--$   & $>5h$ & $--$ & $1.89e-01$ & $1.35e-01$ & $1.31e-01 $ & $1.01e+00$\\
GRCD  &$--$  & $--$  & $>2\cdot 10^5$ & $3.28e-04$ & $2.34e-03$ & $1.63e-03$ & $ 1.73e-01$ \\
AMDCD &$--$  & $--$  & $>2\cdot 10^5$ & $1.71e-02$ & $1.64e-02$ & $1.29e-02$ & $  8.62e-01$ \\
FBCD &$--$   & $>5h$ & $--$ & $6.25e-04$ & $ 9.19e-03$ & $6.92e-03$ & $ 5.07e-01$\\
\bottomrule
\end{tabular}
\end{center}
\end{table}

\subsection{Three-dimensional PDEs}\label{Subsection:4.3}

In this subsection, we further apply the randomized iterative methods LSRN-LSQR, CSQRP-LSQR, CSSVDP-LSQR, CSQR-PLSQR, and CSSVD-PLSQR, as well as the direct methods SPQR and HHQR, to solve ill-conditioned least squares problems arising from three-dimensional PDEs to evaluate their performance. Hyper-parameters for the corresponding least squares problems are listed in Table \ref{tab:hyper-parameters_three-dimensional_PDEs}, Appendix \ref{my_hyperparameters_RFM}.

\par
\begin{example}\label{example:3D Poisson}
Consider the three-dimensional Poisson equation with Dirichlet boundary condition over $\Omega=[0,1] \times[0,1]\times[0,1]$ 
\begin{equation}\label{eq:Poisson}
  \begin{cases}-\Delta u(x,y,z)=f(x,y,z), & (x, y, z) \in \Omega, 
    \\ u(x,y,z)=g(x,y,z), & (x, y, z) \in \partial \Omega,\end{cases}
\end{equation}
where the explicit solution is 
\begin{equation}\label{Poisson_ture_solution}
  \begin{aligned}
    u(x,y,z) =& \left( \cos(\pi x + \frac{2\pi}{5}) + \cos(2\pi x - \frac{\pi}{5}) + \cos(3\pi x + \frac{\pi}{10}) \right) \\
    &\cdot \left( \cos(\pi y + \frac{2\pi}{5}) + \cos(2\pi y - \frac{\pi}{5}) + \cos(3\pi y + \frac{\pi}{10}) \right) \\
    &\cdot \left( \cos(\pi z + \frac{2\pi}{5}) + \cos(2\pi z - \frac{\pi}{5}) + \cos(3\pi z + \frac{\pi}{10}) \right).
  \end{aligned}
\end{equation}
\end{example}

\begin{example}
  Consider the three-dimensional Helmholtz equation with Dirichlet boundary condition over $\Omega=[0,1] \times[0,1]\times[0,1]$ 
  \begin{equation}\label{eq:Helmholtz_eq}
    \begin{cases}\Delta u(x,y,z)+k^{2} u(x,y,z)=f(x,y,z), & (x, y, z) \in \Omega, 
      \\ u(x,y,z)=g(x,y,z), & (x, y, z) \in \partial \Omega,\end{cases}
  \end{equation}
  where $k=0.5$ is the wave number. The explicit solution is
  \begin{equation}
    \begin{aligned}
      u(x,y,z) = &-A \left(\cos\left(\pi x + \frac{2\pi}{5}\right) \cdot \cos\left(\pi y + \frac{2\pi}{5}\right) \cdot \cos\left(\pi z + \frac{2\pi}{5}\right)\right)\\
      & -B \left(\cos\left(\pi x + \frac{2\pi}{5}\right) \cdot \cos\left(2\pi y + \frac{2\pi}{5}\right) \cdot  \cos\left(4\pi z + \frac{2\pi}{5}\right)\right).
    \end{aligned}
  \end{equation}
  The following three cases are considered: the low-frequency problem when $A=1.0$ and $B=0.0$; the high-frequency problem when $A=0.0$ and $B=1.0$; the mixed-frequency problem when $A=0.5$ and $B=0.5$.
  \end{example}

Table \ref{tab:three-dimensional_Poisson_equation} presents the results for the CSQRP-LSQR and CSSVDP-LSQR methods. As $m$ and $n$ increase, the problem size grows with gradual increases in PCPU, CPU, and TCPU. Both methods converge within a reasonable number of iterations, indicating that they effectively transform ill-conditioned problems into well-conditioned ones. The relative error $\frac{\left\|\mathbf{r}_k\right\|_2^2}{\|\mathbf{b}\|_2^2}$ decreases from $1.34 \times 10^{-6}$ to $2.93 \times 10^{-14}$ for the CSQRP-LSQR method and from $1.34 \times 10^{-6}$ to $1.74 \times 10^{-14}$ for the CSSVDP-LSQR method. Better solution accuracy is achieved for larger $m$ and $n$, and both methods can solve the corresponding least squares problems with high precision. Table \ref{tab:the three-dimensional Helmholtz equation} records the numerical results of the CSQRP-LSQR method for the three-dimensional Helmholtz equation. The numerical performance of the CSQRP-LSQR method is consistent with that in Table \ref{tab:three-dimensional_Poisson_equation}. For instance, for the low-frequency case, the relative error $\frac{\left\|\mathbf{r}_k\right\|_2^2}{\|\mathbf{b}\|_2^2}$ reduces from $1.99 \times$ $10^{-8}$ to $4.56 \times 10^{-14}$ as the problem size increases. Similar trends are observed for the high- and mixed-frequency cases. Table \ref{tab:three-dimensional_Helmholtze_low_quation} compares the performance of LSRN-LSQR, CSSVD-PLSQR, and CSSVDP-LSQR methods for the three-dimensional Helmholtz equation with a low-frequency solution. Results show that only the CSSVDP-LSQR method performs well, while the other two methods, which do not explicitly compute the preconditioned matrix, exhibit significantly poorer numerical performance.

\begin{table}[!htbp]
  \begin{center}
    \caption{Numerical results of CSQRP-LSQR and CSSVDP-LSQR methods for the three-dimensional Poisson equation.}\label{tab:three-dimensional_Poisson_equation}%
  \setlength{\tabcolsep}{3.50mm}  
  \begin{tabular}{llllllll}
  \toprule
  Method &$m$ &  $92800 $ &  $ 92800 $ & $170000$  & $ 170000 $ & $280800$ & $280800$ \\
   &$n$ &  $ 3200$ &  $ 6400 $ & $6400$  & $ 9600$ & $ 12800$ & $ 16000$ \\
  \midrule 
  CSQRP-LSQR &PCPU & $18.80$ & $45.18$ & $71.08$ & $122.05$ & $279.18$ & $393.26$ \\
  &CPU  &  $2.87$ & $6.47$ & $11.25$ & $16.10$ & $29.06$ & $40.50$ \\
  &TCPU & $21.67$ & $51.65$ & $82.33$ & $138.15$ & $308.24$ & $433.76$ \\
  &IT   &  $43$ & $51$ & $49$ & $54$ & $57$ & $56$ \\
  &$\frac{\|\mathbf{r}_k\|_2^2}{\|\mathbf{b}\|_2^2}$ & $1.34e-06$ & $1.50e-09$ & $1.65e-09$ & $1.10e-11$ & $4.78e-13$ & $2.93e-14$ \\
  &$u$ error &  $1.40e-03$ & $3.58e-05$ & $4.00e-05$ & $2.94e-06$ & $4.32e-06$ & $1.34e-07$ \\
  \midrule 
  CSSVDP-LSQR &PCPU & $141.54$ & $ 709.85$ & $ 737.29$ & $1574.37$ & $2932.64$ & $4703.31$ \\
  &CPU  &   $2.85$ & $6.44$ & $10.90$ & $17.15$ & $39.57$ & $49.16$ \\
  &TCPU &  $144.39$ & $716.29$ & $748.19$ & $1591.52$ & $2972.21$ & $4752.47$ \\
  &IT   &   $42$ & $51$ & $49$ & $54$ & $58$ & $59$ \\
  &$\frac{\|\mathbf{r}_k\|_2^2}{\|\mathbf{b}\|_2^2}$ &  $1.34e-06$ & $ 1.50e-09$ & $1.65e-09$ & $1.10e-11$ & $1.78e-13$ & $1.79e-14$ \\
  &$u$ error &  $1.40e-03$ & $3.58e-05$ & $4.00e-05$ & $2.93e-06$ & $3.74e-07$ & $1.07e-07$ \\
  \bottomrule
  \end{tabular}
  \end{center}
\end{table}

\begin{table}[!htbp]
  \begin{center}
    \caption{Numerical results of CSQRP-LSQR method for the three-dimensional Helmholtz with the different solution frequencies.}\label{tab:the three-dimensional Helmholtz equation}%
  \setlength{\tabcolsep}{3.50mm}  
  \begin{tabular}{llllllll}
  \toprule
  Frequency  &$m$ &  $92800 $ &  $ 92800 $ & $280800$  & $280800$ & $431200 $ & $431200$ \\
   &$n$ &  $ 3200$ &  $ 6400 $ & $8000$  & $9600$ & $9600$ & $11200$ \\
  \midrule 
  Low  &PCPU & $17.94$ & $45.16$ & $146.11$ & $184.82$ & $264.16$ & $379.50$ \\
  &CPU  &  $2.68$ & $7.26$ & $22.72$ & $29.12$ & $45.09$ & $51.35$ \\
  &TCPU & $20.62$ & $52.42$ & $168.83$ & $213.94$ & $309.25$ & $430.85$ \\
  &IT   & $46$ & $55$ & $56$ & $59$ & $59$ & $59$ \\
  &$\frac{\|\mathbf{r}_k\|_2^2}{\|\mathbf{b}\|_2^2}$ & $1.99e-08$ & $5.75e-12$ & $2.14e-13$ & $1.73e-14$ & $1.58e-14$ & $4.56e-14$ \\
  &$u$ error &  $3.43e-05$ & $5.10e-07$ & $1.02e-07$ & $ 1.48e-07$ & $9.22e-08$ & $4.85e-07$ \\
  \midrule 
  High  &PCPU & $18.42$ & $45.23$ & $148.98$ & $183.79$ & $265.41$ & $368.58$ \\
  &CPU  &  $2.64$ & $6.77$ & $17.45$ & $26.80$ & $42.49$ & $48.35$ \\
  &TCPU & $21.06$ & $52.00$ & $166.43$ & $210.59$ & $307.90$ & $416.93$ \\
  &IT   & $42$ & $49$ & $51$ & $53$ & $53$ & $54$ \\
  &$\frac{\|\mathbf{r}_k\|_2^2}{\|\mathbf{b}\|_2^2}$ & $2.68e-06$ & $2.36e-09$ & $1.92e-10$ & $1.46e-11$ & $1.54e-11$ & $1.93e-12$ \\
  &$u$ error &  $2.91e-03$ & $7.71e-05$ & $ 2.38e-05$ & $5.76e-06$ & $ 6.20e-06$ & $3.66e-06$ \\
  \midrule 
  Mixed &PCPU & $18.36$ & $44.94$ & $149.60$ & $181.76$ & $270.91$ & $375.78$ \\
  &CPU  &  $2.69$ & $6.34$ & $16.05$ & $28.99$ & $44.06$ & $ 50.16$ \\
  &TCPU & $21.05$ & $51.28$ & $165.65$ & $210.75$ & $314.97$ & $425.94$ \\
  &IT   & $42$ & $50$ & $51$ & $53$ & $52$ & $54$ \\
  &$\frac{\|\mathbf{r}_k\|_2^2}{\|\mathbf{b}\|_2^2}$ & $2.63e-06$ & $2.31e-09$ & $ 1.88e-10$ & $1.42e-11$ & $1.50e-11$ & $1.95e-12$ \\
  &$u$ error &  $2.05e-03$ & $5.43e-05$ & $1.68e-05$ & $4.02e-06$ & $4.37e-06$ & $3.93e-06$ \\
  \bottomrule
  \end{tabular}
  \end{center}
\end{table}
\begin{table}[!htbp]
  \begin{center}
    \caption{Comparison of LSRN-LSQR, CSSVD-PLSQR, and CSSVDP-LSQR methods for the three-dimensional Helmholtz with a low-frequency solution.}\label{tab:three-dimensional_Helmholtze_low_quation}%
    \setlength{\tabcolsep}{4.50mm}  
    \begin{tabular}{lllllll}
    \toprule
    Method &  \multicolumn{2}{c}{LSRN-LSQR}  &  \multicolumn{2}{c}{CSSVD-PLSQR} &  \multicolumn{2}{c}{CSSVDP-LSQR} \\
    \cmidrule(lr){2-3} \cmidrule(lr){4-5} \cmidrule(lr){6-7}
    $m$ &$92800$ &  $92800$ & $92800$  & $92800$ & $92800$ & $92800$  \\
    $n$ & $3200$ &  $6400$ & $3200$  & $6400$ & $3200$ & $6400$  \\
    PCPU &  $262.56$ &  $1010.95$ & $121.32$  & $696.23$ & $138.42$ & $706.23$  \\
    CPU & $  75.23$ &  $ 113.41$ & $ 70.92$  & $100.45$ & $2.37$ & $6.84$  \\
    TCPU &  $337.79$ &  $1124.36$ & $192.24$  & $796.68$ & $140.79$ & $712.63$  \\
    IT & $1000$ &  $1000$ & $1000$  & $1000$ & $46$ & $55$  \\
    $\frac{\|\mathbf{r}_k\|_2^2}{\|\mathbf{b}\|_2^2}$ &  $1.00e+00$ &  $1.00e+00$ & $9.84e-01$  & $9.84e-01$ & $1.99e-08$ & $5.75e-12$  \\
    $u$ error & $1.00e+00$ &  $1.00e+00$ & $9.92e-01$  & $9.92e-01$ & $3.43e-05$ & $ 5.10e-07$  \\
    \bottomrule 
    \end{tabular}
  \end{center}
\end{table}

\begin{example}\label{3D_Elasticity}
Consider the three-dimensional elasticity problem
\begin{equation}
  \begin{cases}-\operatorname{div}(\mathbf{u}(\mathbf{x}))=\mathbf{B}(\mathbf{x}), & \mathbf{x} \in \Omega, \\ \sigma(\mathbf{u}(\mathbf{x})) \cdot \mathbf{n}=\mathbf{N}(\mathbf{x}), & \mathbf{x} \in \Gamma_N, \\ \mathbf{u}(\mathbf{x})=\mathbf{U}(\mathbf{x}), & \mathbf{x} \in \Gamma_D,\end{cases}
  \end{equation}
  where $\mathbf{\sigma}: \mathbb{R}^3 \rightarrow \mathbb{R}^3$ is the stress tensor induced by the displacement field $\mathbf{u}: \Omega \rightarrow \mathbb{R}^3, \mathbf{B}$ is the body force over $\Omega, \mathbf{N}$ is the surface force on $\Gamma_N, U$ is the displacement on $\Gamma_D$, and $\partial \Omega=\Gamma_N \cup \Gamma_D$. The exact displacement field is  
  \begin{equation}\label{three-dimensional_elasticity_problem_true_solution}
    \begin{aligned}
      u(x, y, z)&=-\frac{P y}{6 E I}\left((6 L-3 x)x+(2+\mu)\left(y^2-\frac{D^2}{4}\right)+(6 L-3 z) z\right),\\
    v(x, y, z)&=\frac{P}{6 E I}\left(3 \mu y^2(L-x+z)+(4+5 \mu) \frac{D^2 x z}{4}+(3 L-x) x z^2+z^3\right),\\
      w(x, y, z)&=-\frac{P}{6 E I}\left(3 \mu z^2(L-x+y)+(4+5 \mu) \frac{D^2 z y}{4}+(3 L-z) y z^2\right),
    \end{aligned}
  \end{equation}
  where $\Omega=[0,10] \times[0,10]\times[0,10]$, the material parameters are as follows: the Young's modulus $E=3 \times 10^7$ Pa, Poisson ratio $\mu = 0.3$, and we choose $D=10$, $L=10$, $I=\frac{D^3}{12}$ and the shear force as $P=1000$ Pa. Homogeneous Dirichlet boundary condition is applied on the lower boundary $z=0$ and Homogeneous Neumann boundary condition is applied on the other boundaries.
\end{example}
\begin{table}[!htbp]
  \begin{center}
    \caption{Numerical results of CSQRP-LSQR, CSQR-PLSQR, SPQR and HHQR methods for the three-dimensional elasticity problem.}\label{tab:three-dimensional_elasticity_problem}%
    \setlength{\tabcolsep}{3.00mm}  
    \begin{tabular}{llllllll}
    \toprule
    Method &$m$ &  $31200 $ &  $ 97200 $ & $ 278400 $  & $ 278400 $ & $510000$ & $510000$ \\
     &$n$ &  $ 1200$ &  $ 1800 $ & $14400$  & $ 19200$ & $ 24000 $ & $ 28800$ \\
    \midrule 
    CSQRP-LSQR &PCPU & $10.76$ & $55.04$ & $407.45$ & $503.25$ & $1257.97$ & $ 1499.98$ \\
    &CPU  & $0.62$ & $ 3.62$ & $44.77$ & $55.70$ & $130.32$ & $172.77$\\
    &TCPU & $11.38$ & $58.66$ & $452.22$ & $558.95$ & $1388.29$ & $1672.75$\\
    &IT   & $48$ & $47$ & $53$ & $58$ & $60$ & $64$ \\
    &$\frac{\|\mathbf{r}_k\|_2^2}{\|\mathbf{b}\|_2^2}$ & $1.37e-09$ & $2.51e-11$ & $2.01e-11$ & $2.81e-13$ & $1.68e-14$ & $5.03e-15$\\
    &$u$ error & $6.33e-04$ & $ 5.77e-05$ & $8.85e-05$ & $3.92e-06$ & $ 2.75e-06$ & $1.31e-07$\\
    &$v$ error & $2.69e-04$ & $4.92e-05$ & $7.49e-05$ & $2.61e-06$ & $1.39e-06$ & $ 1.11e-07$\\
    &$w$ error & $4.24e-04$ & $ 2.52e-05$ & $3.69e-05$ & $3.05e-06$ & $1.64e-06$ & $ 3.53e-08$\\
    \midrule 
    CSQR-PLSQR &PCPU & $9.42$ & $45.10$ & $ 209.38$ & $348.27$ & $773.25$ & $ 1039.61$ \\
    &CPU  & $0.99$ & $9.77$ & $ 67.20 $ & $85.50$ & $ 240.19$ & $265.43$ \\
    &TCPU & $10.41$ & $54.87$ & $276.58$ & $433.77$ & $1013.44$ & $1305.04$ \\
    &IT   & $89$ & $ 146$ & $83$ & $83$ & $106$ & $106$ \\
    &$\frac{\|\mathbf{r}_k\|_2^2}{\|\mathbf{b}\|_2^2}$ & $ 1.37e-09$ & $2.63e-11$ & $2.03e-11$ & $4.47e-13$ & $4.53e-13$ & $7.55e-13$ \\
    &$u$ error & $6.32e-04$ & $ 5.73e-05$ & $ 8.90e-05$ & $5.44e-06$ & $ 3.99e-06$ & $9.75e-06$ \\
    &$v$ error & $2.70e-04$ & $5.12e-05$ & $7.59e-05$ & $ 3.07e-06$ & $2.08e-06$ & $8.04e-06$ \\
    &$w$ error & $4.24e-04$ & $2.58e-05$ & $3.65e-05$ & $4.10e-06$ & $3.24e-06$ & $8.37e-06$ \\
    \midrule
    SPQR &TCPU& $8.50$ & $ 41.44$ & $458.61$ & $745.07$ & $2047.42$ & $2336.98$\\
    &$\frac{\|\mathbf{r}_k\|_2^2}{\|\mathbf{b}\|_2^2}$& $1.37e-09$ & $2.51e-11$ & $2.01e-11$ & $2.81e-13$& $1.83e-14$ & $ 1.83e-15$\\
    \midrule
    HHQR &TCPU& $3.95$ & $18.4743$ & $694.76$ & $ 1120.25$ & $ 3016.92$ & $3541.24$\\
    &$\frac{\|\mathbf{r}_k\|_2^2}{\|\mathbf{b}\|_2^2}$& $1.37e-09$ & $2.51e-11$ & $2.01e-11$ & $2.81e-13$ & $1.66e-14$ & $ 5.60e-16$\\
    \bottomrule
  \end{tabular}
  \end{center}
\end{table}
\begin{table}[!htbp]
  \begin{center}
    \caption{Numerical results of LSRN-LSQR, CSSVD-PLSQR, and CSSVDP-LSQR methods for the three-dimensional elasticity problem.}\label{tab:three-dimensional_elasticity}%
    \setlength{\tabcolsep}{4.50mm}  
    \begin{tabular}{lllllll}
    \toprule
    Method &  \multicolumn{2}{c}{LSRN-LSQR}  &  \multicolumn{2}{c}{CSSVD-PLSQR} &  \multicolumn{2}{c}{CSSVDP-LSQR} \\
    \cmidrule(lr){2-3} \cmidrule(lr){4-5} \cmidrule(lr){6-7}
    $m$ &$ 31200$ &  $97200$ & $31200$  & $97200$ & $ 31200 $ & $97200$  \\
    $n$ & $1200$ &  $1800$ & $1200$  & $1800$ & $ 1200 $ & $ 1800 $  \\
    PCPU &  $ 31.38$ &  $113.40$ & $24.21$  & $71.65$ & $ 26.04$ & $78.36 $  \\
    CPU &  $10.35$ &  $61.02$ & $0.94$  & $ 6.68$ & $0.46$ & $  2.33$  \\
    TCPU &  $41.73$ &  $174.42$ & $25.15$  & $78.33$ & $22.65$ & $80.69$  \\
    IT &  $ 1000$ &  $1000$ & $90$  & $145$ & $48$ & $ 51 $  \\
    $\frac{\|\mathbf{r}_k\|_2^2}{\|\mathbf{b}\|_2^2}$ &  $9.60e-03$ &  $1.22e-01$ & $1.37e-09$  & $2.63e-11$ & $1.37e-09$ & $ 2.51e-11$  \\
    $u$ error &  $6.10e-01$ &  $2.36e+00$ & $6.29e-04$  & $5.01e-05$ & $6.33e-04$ & $5.77e-05$  \\
    $v$ error &  $2.17e-01$ &  $8.16e-01$ & $2.67e-04$  & $4.01e-05$ & $2.69e-04$ & $4.92e-05$  \\
    $w$ error &  $4.18e-01$ &  $1.08e+00$ & $4.21e-04$  & $2.60e-05$ & $4.24e-04$ & $2.52e-05$  \\
    \bottomrule 
    \end{tabular}
  \end{center}
\end{table}
\begin{example}\label{3D_Stokes_flow}
  Consider the three-dimensional Stokes flow problem
  \begin{equation}
    \begin{cases}-\Delta \boldsymbol{u}(\mathbf{x})+\nabla p(\mathbf{x})=f(\mathbf{x}), & \mathbf{x} \in \Omega, \\ \nabla \cdot \boldsymbol{u}(\mathbf{x})=0, & \mathbf{x} \in \Omega, \\ \boldsymbol{u}(\mathbf{x})=\boldsymbol{U}(\mathbf{x}), & \mathbf{x} \in \partial \Omega,\end{cases}
    \end{equation}
    where $\Omega=[0,10] \times[0,10]\times[0,10]$, and velocity Dirichlet boundary conditions are applied to all boundaries, specifically imposed only on the displacement function. The exact solution is
\begin{equation}\label{three-dimensional_ Stokes_flow_true_solution}
  \begin{aligned}
    u(x, y, z)&=\sin (x+y+z)-\tanh (x+y+z)+x^2 y+x z^2,\\
    v(x, y, z)&=-\sin (x+y+z)+\cos (x+y+z)-x y^2+y^2 z,\\
    w(x, y, z)&=-\cos (x+y+z)+\tanh (x+y+z)-\frac{z^3}{3}-y z^2,\\
    p(x, y, z)&=\frac{P}{6 E I}\left(3 \mu x^2(L-x+y)+(4+5 \mu) \frac{D^2 x y}{4}+(3 L-z) z^2\right).
  \end{aligned}
\end{equation}
\end{example}
\begin{table}[!htbp]
  \begin{center}
    \caption{Comparison of CSQRP-LSQR, CSQR-PLSQR, SPQR, and HHQR methods for the three-dimensional Stokes flow problem.}\label{tab:three-dimensional Stokes flow}%
  \setlength{\tabcolsep}{3.00mm} 
  \begin{tabular}{llllllll}
  \toprule
  Method &$m$ & $39200$ &  $124200$ & $347200$  & $347200$ & $642500$ & $642500$ \\
  &$n$ & $ 3200 $ &  $ 3200$ & $19200$  & $ 25600$ & $32000$ & $ 38400$ \\
  \midrule 
  CSQRP-LSQR &PCPU & $22.91$ & $60.78$ & $436.61$ & $782.58$ & $1801.06$ & $2572.67$\\
  &CPU  & $2.06$ & $4.32$ & $ 59.52$ & $102.31$ & $204.40$ & $279.06$\\
  &TCPU & $24.97$ & $65.10$ & $496.13$ & $884.89$ & $2005.46$ & $2851.73$\\
  &$\frac{\|\mathbf{r}_k\|_2^2}{\|\mathbf{b}\|_2^2}$& $4.44e-12$ & $6.11e-12$ & $9.95e-12$ & $2.05e-13$ & $ 9.39e-15$ & $7.92e-16$\\
  &IT   & $57$ & $53$ & $54$ & $60$ & $58$ & $63$\\
  &$u$ error  & $1.10e-06$ & $1.36e-06$ & $6.05e-07$ & $9.76e-08$ & $6.91e-08$ & $5.55e-09$\\
  &$v$ error  & $5.14e-07$ & $6.30e-07$ & $2.96e-07$ & $4.89e-08$ & $ 2.64e-08$ & $2.61e-09$\\
  &$w$ error & $7.59e-07$ & $9.03e-07$ & $4.00e-07$ & $8.58e-08$ & $ 1.31e-08$ & $3.19e-09$\\
  \midrule 
  CSQR-PLSQR &TCPU & $18.88$ & $51.97$ & $296.98$ & $529.51$ & $1107.82$ & $1723.14$\\
  &CPU  & $38.89$ & $ 78.85$ & $938.39$ & $ 1280.56$ & $2725.97$ & $3281.50$\\
  &TCPU & $57.77$ & $130.82$ & $1235.37$ & $1810.07$ & $3833.79$ & $5004.64$\\
  &$\frac{\|\mathbf{r}_k\|_2^2}{\|\mathbf{b}\|_2^2}$& $9.79e-01$ & $9.93e-01$ & $9.92e-01$ & $9.92e-01$ & $9.95e-01$ & $9.95e-01$\\
  &IT   & $1000$ & $1000$ & $1000$ & $1000$ & $1000$ & $1000$\\
  &$u$ error  & $9.90e-01$ & $9.96e-01$ & $9.96e-01$ & $9.96e-01$ & $9.97e-01$ & $9.97e-01$\\
  &$v$ error  & $9.90e-01$ & $9.96e-01$ & $9.96e-01$ & $9.96e-01$ & $9.97e-01$ & $9.97e-01$\\
  &$w$ error & $9.90e-01$ & $9.97e-01$ & $9.96e-01$ & $9.96e-01$ & $9.97e-01$ & $9.97e-01$\\
  \midrule
  SPQR &TCPU& $18.95$ & $54.31$ & $636.23$ & $1197.22$ & $2755.07$ & $3846.36$\\
  &$\frac{\|\mathbf{r}_k\|_2^2}{\|\mathbf{b}\|_2^2}$& $4.44e-12$ & $6.14e-12$ & $ 9.96e-12$ & $2.12e-13$ & $9.62e-15$ & $7.92e-16$\\
  \midrule
  HHQR &TCPU& $12.34$ & $35.86$ & $ 1184.43$ & $1905.83$ & $5051.38$ & $7247.11$\\
  &$\frac{\|\mathbf{r}_k\|_2^2}{\|\mathbf{b}\|_2^2}$& $4.44e-12$ & $6.11e-12$ & $ 9.95e-12$ & $2.05e-13$ & $ 7.78e-15$ & $2.90e-16$\\
  \bottomrule
  \end{tabular}
  \end{center}
\end{table}
\begin{table}[!htbp]
  \begin{center}
    \caption{Comparison of LSRN-LSQR, CSSVD-PLSQR, and CSSVDP-LSQR methods for the three-dimensional Stokes flow problem.}\label{tab:3Dtokes}%
    \setlength{\tabcolsep}{4.50mm}  
    \begin{tabular}{lllllll}
    \toprule
    Method &  \multicolumn{2}{c}{LSRN-LSQR}  &  \multicolumn{2}{c}{CSSVD-PLSQR} &  \multicolumn{2}{c}{CSSVDP-LSQR} \\
    \cmidrule(lr){2-3} \cmidrule(lr){4-5} \cmidrule(lr){6-7}
    $m$ &$39200$ &  $124200$ & $39200$  & $124200$ & $39200$ & $124200$  \\
    $n$ & $3200$ &  $3200$ & $3200$  & $3200$ & $3200$ & $3200$  \\
    PCPU &  $165.27$ &  $293.49$ & $125.46$  & $158.27$ & $130.79$ & $ 165.34$  \\
    CPU &  $52.61$ &  $81.10$ & $49.86$  & $85.98$ & $ 1.90$ & $4.33$  \\
    TCPU &  $217.88$ &  $374.59$ & $175.32$  & $244.25$ & $132.69$ & $169.67$  \\
    IT & $1000$ &  $1000$ & $1000$  & $1000$ & $55$ & $53$  \\
    $\frac{\|\mathbf{r}_k\|_2^2}{\|\mathbf{b}\|_2^2}$ &  $1.00e+00$ &  $1.00e+00$ & $ 9.80e-01$  & $ 9.92e-01$ & $4.44e-12$ & $6.11e-12$  \\
    $u$ error &  $1.00e+00$ &  $1.00e+00$ & $ 9.90e-01$  & $9.96e-01$ & $1.10e-06$ & $1.36e-06$  \\
    $v$ error &  $1.00e+00$ &  $1.00e+00$ & $9.90e-01$  & $9.96e-01$ & $5.14e-07$ & $6.30e-07$  \\
    $w$ error &  $1.00e+00$ &  $1.00e+00$ & $9.89e-01$  & $9.96e-01$ & $7.59e-07$ & $9.03e-07$  \\
    \bottomrule 
    \end{tabular}
  \end{center}
\end{table}

Next, we further compare the accuracy, computational efficiency, and other numerical performance aspects of the LSRN-LSQR, CSQRP-LSQR, CSSVDP-LSQR, CSQR-PLSQR, CSSVD-PLSQR, SPQR, and HHQR methods for the three-dimensional elasticity equation (Example \ref{3D_Elasticity}) and the three-dimensional Stokes flow equation (Example \ref{3D_Stokes_flow}). Numerical results are recorded in Tables \ref{tab:three-dimensional_elasticity_problem} and \ref{tab:three-dimensional_elasticity} for the elasticity problem, and in Tables \ref{tab:three-dimensional Stokes flow} and \ref{tab:3Dtokes} for the Stokes flow problem. The error precision of the CSQRP-LSQR and CSSVDP-LSQR methods on ill-conditioned least squares problems is comparable to those of HHQR and SPQR, as shown in Tables \ref{tab:three-dimensional_elasticity_problem}-\ref{tab:3Dtokes}. As the problem size increases, the CSQRP-LSQR method consumes less time compared to the HHQR and SPQR methods; see Tables \ref{tab:three-dimensional_elasticity_problem} and \ref{tab:three-dimensional Stokes flow}. The CSQR-PLSQR method demonstrates good convergence for elasticity problems, though its precision on larger scales is slightly lower than that of CSQRP-LSQR. Its advantage lies in reduced preconditioning time due to not forming an explicit preconditioned matrix, as shown in Table \ref{tab:three-dimensional_elasticity_problem}. However, for the Stokes flow problems, the errors and iteration count in Table \ref{tab:three-dimensional Stokes flow} indicate that the CSQR-PLSQR method exhibits very poor numerical convergence. In Table \ref{tab:three-dimensional_elasticity_problem}, the relative $L^2$ errors for $u, v$, and $w$ in the elasticity problem range from approximately $10^{-7} \sim 10^{-4}$ for CSQRP-LSQR and $10^{-6} \sim 10^{-4}$ for CSQR-PLSQR, while in Table \ref{tab:three-dimensional Stokes flow}, for the Stokes flow problem, the relative $L^2$ errors of the CSQRP-LSQR method range from approximately $10^{-9} \sim 10^{-6}$. Table \ref{tab:3Dtokes} indicates  that the numerical results of the LSRN-LSQR, CSSVD-PLSQR, and CSSVDP-LSQR methods exhibit the same experimental phenomena as their performance in Table \ref{tab:three-dimensional_Helmholtze_low_quation} for the Helmholtz equation, where only the CSSVDP-LSQR method performs well for the Stokes flow problem, while the LSRN-LSQR and CSSVD-PLSQR methods perform poorly.

In summary, the numerical results in this subsection demonstrate that the randomized methods CSQRP-LSQR and CSSVDP-LSQR, which explicitly compute the preconditioned matrix, consistently achieve high accuracy comparable to the direct solvers SPQR and HHQR. As the problem size increases, CSQRP-LSQR outperforms SPQR and HHQR in terms of computational time. In contrast, methods that do not explicitly form the preconditioned matrix, such as LSRN-LSQR, CSQR-PLSQR, and CSSVD-PLSQR, generally exhibit significantly poorer numerical performance.

\subsection{Least squares problems with rank deficiency or infinite condition number}\label{Subsection:4.4}

In this subsection, we check the performance of the CSSVDP-LSQR method for least squares problems with infinite condition numbers or rank deficiency, sourced from the Florida Sparse Matrix Collection \cite{DavisHu}. Table \ref{florida_sparse_matrix_information} lists their size, rank, density, and condition number. In these problems, $\mathbf{b}=\mathbf{A} \mathbf{x}_{\star}$, where $\mathbf{x}_{\star}$ is the exact solution generated by the ``numpy''  library using the ``randn'' function in Python. Tables \ref{tab:Numerical_florida_sparse_matrices_table_1}-\ref{tab:Numerical_florida_sparse_matrices_table_3} present the numerical results for CSQRP-LSQR and CSSVDP-LSQR methods. In these tables, ``$--$'' indicates the failure of the CSQRP-LSQR method in the preconditioning phase.
\begin{table}[!htbp]
  \begin{center}
  \caption{Comparison of CSQRP-LSQR and CSSVDP-LSQR methods for matrices from the Florida Sparse Matrix Collection.}\label{tab:Numerical_florida_sparse_matrices_table_1}%
  \setlength{\tabcolsep}{2.00mm} 
  \begin{tabular}{llllllll}
  \toprule
  Method &  Name & GL7d11 & rel6 & aa4${^T}$ & relat6  & air05${^T}$ & ch8-8-b1 \\
  \midrule 
  CSQRP-LSQR  &$\kappa(\mathbf{A}\mathbf{R}^{-1})$ & $ 60.6420$ & $--$ & $--$ & $--$ & $--$ & $1.83e+15$\\
              &PCPU & $0.125$ & $--$ & $--$ & $--$ & $--$ & $0.1189$\\
              &CPU & $0.0025$ & $--$ & $--$ & $--$ & $--$ & $0.0026$\\
              &TCPU & $0.1276$ & $--$ & $--$ & $--$ & $--$ & $0.7204$\\
              &IT& $33$ & $--$ & $--$ & $--$ & $--$ & $24$\\
              &$\frac{\|\mathbf{r}_k\|^2_2}{\|\mathbf{b}\|_2^2}$ & $6.44e-15$ & $--$ & $--$ & $--$ & $--$ & $2.20e-15$\\
  \midrule
  CSSVDP-LSQR &$\kappa(\mathbf{A}\mathbf{P})$ & $3.8815$ & $3.1253$ & $ 3.4820$ & $3.2876$ & $ 3.4820$ & $3.5831$\\
              &PCPU & $0.1536$ & $0.1725$ & $0.4726$ & $0.1857$ & $0.5233$ & $0.1467$\\
              &CPU & $  0.0019$ & $0.0058$ & $0.0386$ & $0.0060$ & $0.0572$ & $ 0.0026$\\
              &TCPU & $0.1555$ & $0.1783$ & $0.5112$ & $0.1917$ & $0.5805$ & $0.1493$\\
              &IT& $24$ & $24$ & $27$ & $25$ & $27$ & $24$\\
              &$\frac{\|\mathbf{r}_k\|_2^2}{\|\mathbf{b}\|_2^2}$ & $3.27e-15$ & $4.60e-15$ & $3.10e-15$ & $ 4.90e-15$ & $3.10e-15$ & $ 1.53e-15$\\
  \bottomrule
  \end{tabular}
  \end{center}
\end{table}
\begin{table}[!htbp]
  \begin{center}
  \caption{Comparison of CSQRP-LSQR and CSSVDP-LSQR methods for matrices from the Florida Sparse Matrix Collection.}\label{tab:Numerical_florida_sparse_matrices_table_2}%
  \setlength{\tabcolsep}{1.50mm}  
  \begin{tabular}{llllllll}
  \toprule
  Method &  Name & shar\_te2-b1 & us04${^T}$ & ch8-8-b2 & rel7  & kl02${^T}$ & relat7b \\
  \midrule 
  CSQRP-LSQR  &$\kappa(\mathbf{A}\mathbf{R}^{-1})$ & $2.18e+14$ & $ 4.52e+43$ & $1.39e+04$ & $--$ & $46.1324$ & $--$\\
              &PCPU & $0.4507$ & $ 0.5356$ & $ 1.4143$ & $--$ & $0.6256$ & $--$\\
              &CPU & $0.2823$ & $0.0450$ & $ 13.9163$ & $--$ & $0.1398$ & $--$\\
              &TCPU & $0.7330$ & $0.9856$ & $15.3306$ & $--$ & $2.0615$ & $--$\\
              &IT& $28$ & $6$ & $884$ & $--$ & $71$ & $--$\\
              &$\frac{\|\mathbf{r}_k\|_2^2}{\|\mathbf{b}\|_2^2}$ & $ 5.98e-15$ & $ 0.99$ & $2.01e-08$ & $--$ & $1.53e-04$ & $--$\\
  \midrule
  CSSVDP-LSQR &$\kappa(\mathbf{A}\mathbf{P})$ & $3.7358$ & $ 2.7751$ & $3.6138$ & $3.6682$ & $ 3.1171$ & $3.6051$\\
              &PCPU & $ 0.4749$ & $ 0.5202$ & $2.0042$ & $1.3262$ & $ 0.6224$ & $1.4965$\\
              &CPU & $0.2755$ & $0.0418$ & $  0.6461$ & $0.6281$ & $ 0.0355$ & $0.5367$\\
              &TCPU & $2.1006$ & $0.5620$ & $2.6503$ & $1.9543$ & $1.5740$ & $4.5166$\\
              &IT& $28$ & $21$ & $29$ & $29$ & $22$ & $29$\\
              &$\frac{\|\mathbf{r}_k\|_2^2}{\|\mathbf{b}\|_2^2}$ & $4.83e-15$ & $ 2.10e-15$ & $3.65e-15$ & $5.04e-15$ & $4.06e-15$ & $ 4.09e-15$\\
  \bottomrule
  \end{tabular}
  \end{center}
\end{table}
\begin{table}[!htbp]
  \begin{center}
  \caption{Comparison of CSQRP-LSQR, CSSVDP-LSQR and QR methods for matrices from the Florida Sparse Matrix Collection.}\label{tab:Numerical_florida_sparse_matrices_table_3}%
  \setlength{\tabcolsep}{1.50mm}  
  \begin{tabular}{llllllll}
  \toprule
  Method &  Name & stat96v5$^T$ & shar\_te2-b2 & connectus${^T}$ & rel8  & relat8 & 12month1$^T$ \\
  \midrule 
  CSQRP-LSQR  &$\kappa(\mathbf{A}\mathbf{R}^{-1})$ & $--$ & $7.05e+14$ & $--$ & $--$ & $--$ & $\dagger$\\
              &PCPU & $--$ & $166.0192$ & $--$ & $--$ & $--$ & $279.8977$\\
              &CPU & $--$ & $630.5781$ & $--$ & $--$ & $--$ & $>30000$\\
              &TCPU & $--$ & $796.5973$ & $--$ & $--$ & $--$ & $--$\\
              &IT & $--$ & $402$ & $--$ & $--$ & $--$ & $--$\\
              &$\frac{\|\mathbf{r}_k\|_2^2}{\|\mathbf{b}\|_2^2}$ &$--$ & $3.16e-04$ & $--$ & $--$ & $--$ & $--$\\
  \midrule
  CSSVDP-LSQR &$\kappa(\mathbf{A}\mathbf{P})$ & $3.7392$ & $ 3.7476$ & $3.3782$ & $3.7503$ & $ 3.7195$ & $ 5.7268$\\
              &PCPU & $ 5.7638$ & $512.0306$ & $9.1436$ & $296.6587$ & $303.0039$ & $415.9679$\\
              &CPU & $2.8392$ & $51.9230$ & $ 4.4282$ & $77.0743$ & $78.1589$ & $280.6610$\\
              &TCPU & $8.6030$ & $563.9536$ & $13.5718$ & $373.7330$ & $381.1628$ & $696.6289$\\
              &IT & $30$ & $30$ & $26$ & $30$ & $30$ & $39$\\
              &$\frac{\|\mathbf{r}_k\|_2^2}{\|\mathbf{b}\|_2^2}$ & $4.86e-15$ & $5.16e-15$ & $4.45e-15$ & $5.08e-15$ & $ 4.49e-15$ & $ 1.05e-14$\\
  \midrule
  QR        &TCPU & $--$ & $962.9913$ & $--$ & $--$ & $--$ & $2311.7958$\\
            &$\frac{\|\mathbf{r}_k\|_2^2}{\|\mathbf{b}\|_2^2}$ & $--$ & $1.05e-28$ & $--$ & $--$ & $--$ & $\text{nan}$\\
  \bottomrule
  \end{tabular}
  \end{center}
\end{table}

In Tables \ref{tab:Numerical_florida_sparse_matrices_table_1}-\ref{tab:Numerical_florida_sparse_matrices_table_3}, the condition number of $\mathbf{AP}$ ranges from 3 to 6. Upon satisfying the termination criteria, the CSSVDP-LSQR method consistently achieves a relative around $10^{-15}$ with 24 to 40 iterations. In contrast, the CSQRP-LSQR method is less effective for rank-deficient and infinite condition number problems. In Table \ref{tab:Numerical_florida_sparse_matrices_table_3}, ``$\dagger$'' indicates that calculating the condition number for $\mathbf{A} \mathbf{R}^{-1}$ causes a non-convergence exception in SVD. Additionally, results in Table \ref{tab:Numerical_florida_sparse_matrices_table_3} suggest that QR decomposition is less effective for these problems.

It is known that the convergence of most iterative methods depends on the distribution of eigenvalues or singular values of the preconditioned matrix. Concentration around $1$ with few outliers leads to fast convergence \cite{GroHuc}. Figure \ref{fig:shar_te2-b1_in_A_and_B_singular_values} shows that for shar\_te2-b1, singular values of $\mathbf{A}\mathbf{R}^{-1}$ cluster near 1 with one outlier, indicating the improved distribution due to CSQRP-LSQR preconditioning. Despite a condition number of $2.18 \times$ $10^{14}$, LSQR converges.  
Similarly, Table \ref{tab:Numerical_florida_sparse_matrices_table_1} shows that for ch8-8-b1, $\mathbf{A}\mathbf{R}^{-1}$ has a condition number of $1.83 \times 10^{15}$, but CSQRP-LSQR still converges. 
Figure \ref{fig:connectus_in_A_and_B_singular_values} also shows that $\mathbf{AP}$ has no outlier singular values due to a threshold (RCOND) in the preprocessing stage of the CSSVDP-LSQR method, leading to a more concentrated singular value distribution and enhancing both the stability and convergence of LSQR.
\begin{figure}[!htbp]
  \renewcommand\figurename{Figure}
    \centering
    \includegraphics[scale=0.30]{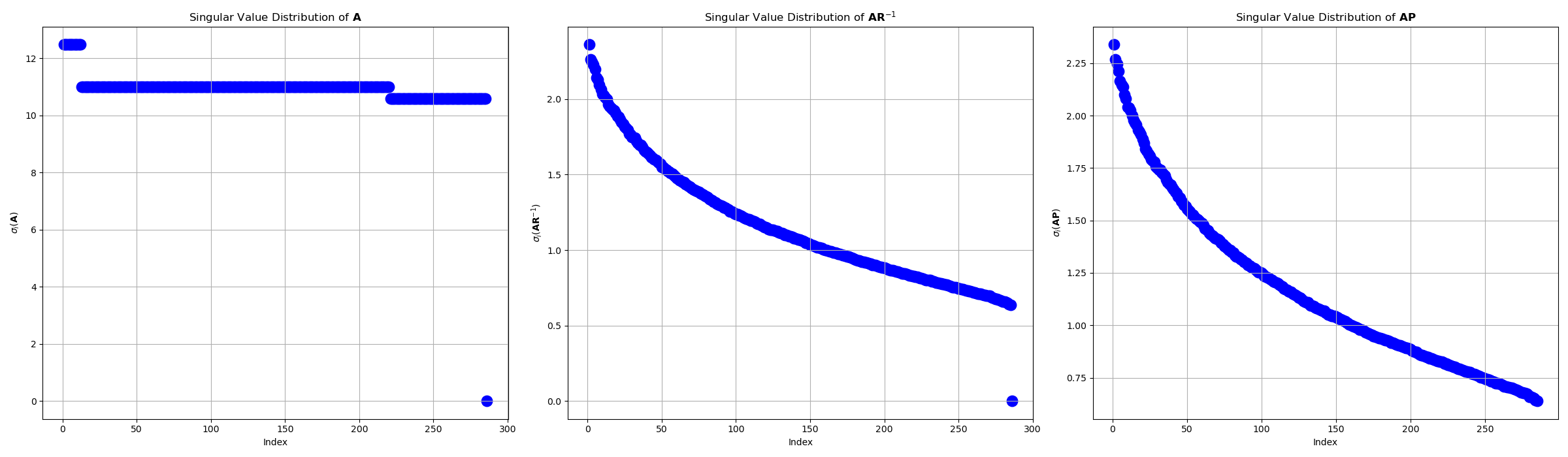}
  \caption{shar\_te2-b1: Singular value distribution of $\mathbf{A}$, $\mathbf{A}\mathbf{R}^{-1}$, and $\mathbf{A}\mathbf{P}$.}
  \label{fig:shar_te2-b1_in_A_and_B_singular_values}
\end{figure}
\begin{figure}[!htbp]
  \renewcommand\figurename{Figure}
    \centering
    \includegraphics[scale=0.45]{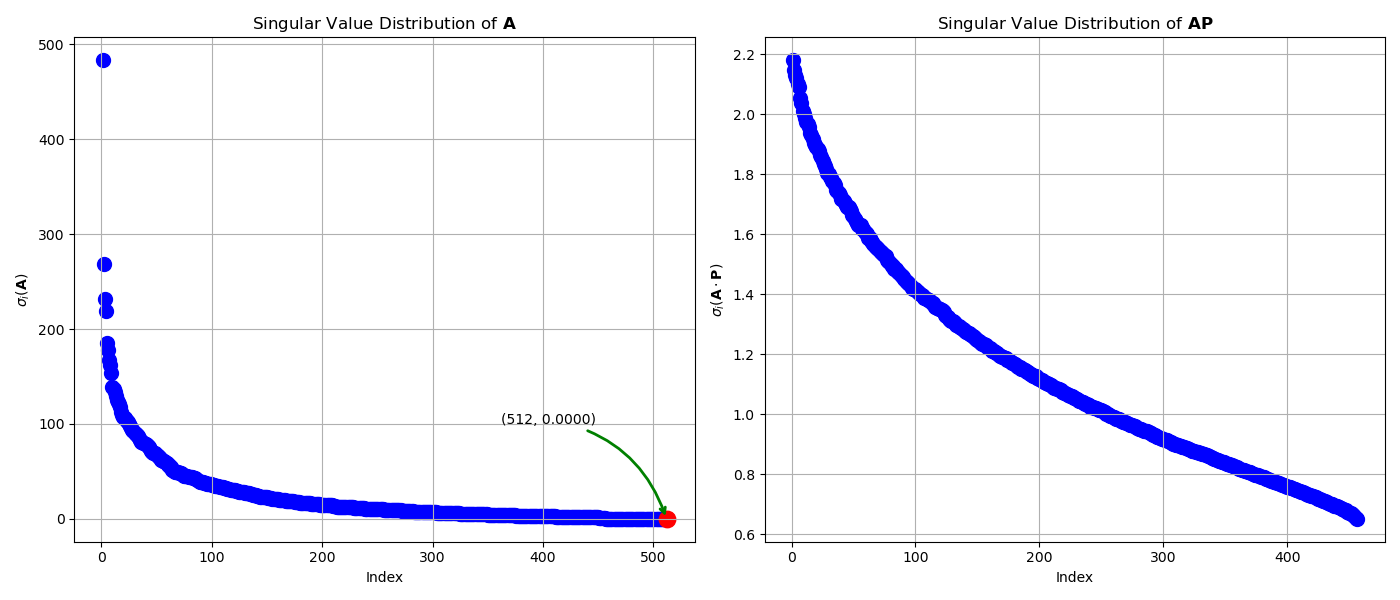}
  \caption{connectus: Singular value distribution of $\mathbf{A}$, $\mathbf{A}\mathbf{R}^{-1}$, and $\mathbf{A}\mathbf{P}$.}
  \label{fig:connectus_in_A_and_B_singular_values}
\end{figure}

\section{Concluding remarks} \label{section:5}
In this work, we have combined the randomization and preconditioning techniques to design randomized iterative methods for linear least squares problems.
Our methods consist of three main components: (1) a count sketch technique is used to sketch the original matrix to a smaller matrix; (2) a QR factorization or a singular value decomposition is employed for the smaller matrix to obtain the preconditioner, which is multiplied to the original matrix from the right-hand side; (3) least squares iterative solvers are employed to solve the preconditioned least squares system. Therefore, the methods we have developed are termed CSQRP-LSQR and CSSVDP-LSQR. In the framework of $\ell_2$ subspace embedding, we have proved that the preconditioned problem holds a condition number whose upper bound is independent of the condition number of the original matrix, and have provided error estimates for both methods. The CSQRP-LSQR method is computationally faster on large-scale problems compared to the state-of-the-art sparse direct solver SPQR \cite{Davis_SPQR}. Meanwhile, the relative least squares error precision of CSQRP-LSQR and CSSVDP-LSQR methods is on par with SPQR and Householder QR. For some problems, we have observed that the explicit construction of the preconditioned matrix enables high precision in CSQRP-LSQR and CSSVDP-LSQR methods, a significant advantage over LSRN-LSQR, CSQR-PLSQR, and CSSVD-PLSQR methods. At last, the CSSVDP-LSQR method can also solve large-scale, sparse, severely overdetermined least squares problems with rank deficiency and infinite condition numbers with high accuracy.

Theorem \ref{theorem:3.2} suggests a sample size of $s=\mathcal{O}(n^2)$. However, extensive numerical experiments have demonstrated that linear sample size has already worked well in practice. It remains unclear how to close this gap. Nevertheless, our discovery greatly enhances the practical utility of the count sketch transformation. Experiments also show that such a technique to compute the sketched matrix significantly reduces fill-in, and requires high memory demands, which shall be explored in the future.  

 \section*{Acknowledgements}
 This work was partially supported by the NSFC Major Research Plan - Interpretable and General Purpose Next-generation Artificial Intelligence (Nos. 92270001 and 92370205).
 
 \bibliographystyle{siam}
 \bibliography{paper_references}

\newpage  
\begin{appendices}
\section{Results for PDEs without explicit  solutions}\label{my_experiments_without_explicit_solutions}

In \ref{Subsection:4.3}, randomized iterative methods, such as LSRN-LSQR, CSQR-PLSQR, and CSSVD-PLSQR, perform poorly on highly ill-conditioned least squares problems with exact solutions. In this section, we check their performance for PDE problems without explicit solutions and demonstrate the effectiveness of CSQRP-LSQR and CSSVDP-LSQR methods. Hyper-parameters in the RFM  are recorded in Appendix \ref{my_hyperparameters_RFM}, Tables \ref{tab:hyper-parameters_without_analytical_solutions_2DPDEs} and \ref{tab:hyper-parameters_without_analytical_solutions_3DPDEs}, respectively.

Consider the two-dimensional elasticity problem 
\begin{equation}
  \begin{cases}-\operatorname{div}(\mathbf{u}(\mathbf{x}))=\mathbf{B}(\mathbf{x}), & \mathbf{x} \in \Omega \\ \sigma(\mathbf{u}(\mathbf{x})) \cdot \mathbf{n}=N(\mathbf{x}), & \mathbf{x} \in \Gamma_N \\ \mathbf{u}(\mathbf{x}) =\mathbf{U}(\mathbf{x}), & \mathbf{x} \in \Gamma_D\end{cases}
  \end{equation}

\begin{example}\label{example:4.8}
First, the domain is given by a square $(-1,1.5) \times(-0.5,0.5)$ jointed by a semi-disk centered at $(1.0,0.0)$ with radius 0.5, with two disks centered at $(1.2,0.0),(-0.5,0.0)$ with radius 0.2 removed (see Fig.\ref{fig:double_holes_elasticity}). The left boundary $x=-0.5$ is fixed and a load $P=10^7$ $\mathrm{Pa}$ is applied on the upper half of the semicircle. Dirichlet boundary condition is applied on the left boundary $x=-0.5$ and Neumann boundary condition is applied on the other boundaries.
\end{example}

\begin{example}\label{example:4.9}
Second, we consider a complex geometry; see Figure \ref{fig:complex_geometry_domain_elasticity}. Here $\Omega$ is defined as a square $(0,8) \times(0,8)$ with 40 holes of radius between 0.3 and 0.6 inside. Note that there is a cluster of nearly touching holes, as shown in the inset. 
Dirichlet boundary condition is applied on the lower boundary $y=0$ and Neumann boundary condition on the other boundaries and the holes inside. The material constants are: Young's modulus $E=3 \times 10^7 \mathrm{~Pa}$ and Poisson ratio $v=0.3$.
\end{example}
\begin{example}\label{example:4.10}
Consider the homogenization equation over the unit disk
  \begin{equation}\label{2D_multi-scale_solutions}
    \begin{cases}-\operatorname{div}(a(\mathbf{x}) \nabla u(\mathbf{x}))=f(\mathbf{x}), & \mathbf{x} \in \Omega, \\ u(\mathbf{x})=0, & \mathbf{x} \in \partial \Omega,\end{cases}
    \end{equation}
  where $a(\mathbf{x})=e^{h(\mathbf{x})}, h(\mathbf{x})=\sum_{|\mathbf{k}| \leq R}\left(a_{\mathbf{k}} \sin (2 \pi \mathbf{k} \cdot \mathbf{x})+b_{\mathbf{k}} \cos (2 \pi \mathbf{k} \cdot \mathbf{x})\right), R=6$, and $\left\{a_{k}\right\}$ and $\left\{b_k\right\}$ are independent, identically distributed random variables with the distribution $\mathbb{U}[-0.3,0.3]$. This is chosen so there is no clear scale separation in the coefficient \cite{OwhadiZhang}.
\end{example}
\begin{example}
  Consider the Stokes flow defined by
\begin{equation}
  \begin{cases}-\Delta \boldsymbol{u}(\mathbf{x})+\nabla p(\mathbf{x})=f(\mathbf{x}), & \mathbf{x}\in \Omega \\ \nabla \cdot \boldsymbol{u}(\mathbf{x})=0, & \mathbf{x} \in \Omega \\ \boldsymbol{u}(\mathbf{x})=\boldsymbol{U}(\mathbf{x}), & \mathbf{x} \in \partial \Omega \end{cases}
\end{equation}
with four sets of complex obstacles under inhomogeneous boundary conditions
\begin{equation}
  \left.(u, v)\right|_{\partial \Omega}= \begin{cases}(y(1-y), 0), & \text { if } x=0,\\ (y(1-y), 0), & \text { if } x=1, \\ (0,0), & \text { otherwise.}\end{cases}
  \end{equation}
\end{example}
\begin{example}
Consider the three-dimensional homogenization equation with Dirichlet boundary condition over $\Omega=[0,1] \times[0,1]\times[0,1]$ 
\begin{equation}
  \left\{\begin{array}{c}
  -\operatorname{div}(a(\mathbf{x}) u(\mathbf{x}))=f(\mathbf{x}), \quad \mathbf{x} \in \Omega,  \\
  u(\mathbf{x})=0 ,\quad \mathbf{x} \in \partial \Omega,
  \end{array}\right.
\end{equation}
where $a(\mathbf{x})=e^{h(\mathbf{x})}$, 
$$
\begin{aligned}
& h(\mathbf{x})=\sum_{|k| \leq R}\left(a_k \sin (2 \pi \mathbf{k} \cdot \mathbf{x})+b_k \cos (2 \pi \mathbf{k} \cdot \mathbf{x})\right) \\
& =\sum_{|k| \leq R}\left(a_k \sin \left(2 \pi\left(k_x x+k_y y+k_z z\right)\right)+b_k \cos \left(2 \pi k_x x+k_y y+k_z z\right)\right),
\end{aligned}
$$
and $\left\{a_k\right\}$ and $\left\{b_k\right\}$ are independent, identically distributed random variables with the distribution $\mathbb{U}[-0.3, 0.3]$. When $R$ takes the values $2$, $2.5$, $3$, and $4$, $h(\mathbf{x})$ is respectively the sum of $33,81,123$, and 257 distinct functions.
\end{example}
\begin{example}
Consider the three-dimensional Poisson equation with complex geometry
\begin{equation}\label{eq:complex_Poisson}
  \begin{cases}\Delta u(x,y,z)= 1 & (x, y, z) \in \Omega, 
    \\ u(x,y,z)=0 & (x, y, z) \in \partial \Omega,\end{cases}
\end{equation}
where $\Omega$ is defined by rotating a square region $[0,1] \times[1,2]$, from which four circular holes have been removed and five circular holes have been filled, around the z-axis to form a complete revolution. Figure \ref{fig:three-dimensional_Poisson_equation_solution} illustrates the specific structure of this square region, and the center coordinates and radii of these circular holes are provided in Table \ref{tab:Filled_Removed_holes}.
\end{example}

Tables \ref{tab:my_double_holes} and \ref{tab:complex_domain_elasticity} compare the numerical performance of the CSQRP-LSQR method with different initial values for Examples  \ref{example:4.8} and \ref{example:4.9}. From these two tables, it is evident that using $\mathbf{x}^{(0)}=\mathbf{Q}^T(\mathbf{S b})$ as the initial guess in LSQR extends preprocessing time compared to $\mathbf{x}^{(0)}=\mathbf{0}$, due to the additional computational effort required to compute $\mathbf{Q}^T(\mathbf{S b})$. This approach reduces the number of iterations for some problems, though it does not significantly affect the iteration count for most. As the size of the least squares problem increases, the relative $L^2$ errors of the displacement field $u, v$, and the stress field $\sigma_x, \tau_{x y}, \sigma_y$ decrease. For Example \ref{example:4.8}, using the numerical solution from the largest parameter settings as the reference, the relative $L^2$ errors for $u, v, \sigma_x, \sigma_y$, and $\tau_{x y}$ are respectively $4.17 \times 10^{-3}, 2.83 \times 10^{-3}, 4.34 \times 10^{-3}, 8.88 \times 10^{-3}$, and $3.97 \times 10^{-3}$. For Example \ref{example:4.9}, the errors of $u, v, \sigma_x, \sigma_y$, and $\tau_{x y}$ are respectively $4.17 \times 10^{-3}, 2.83 \times 10^{-3}, 4.34 \times 10^{-3}, 8.88 \times 10^{-3}$, and $3.97 \times 10^{-3}$. Figures \ref{fig:double_holes_elasticity} and \ref{fig:complex_geometry_domain} depict the displacement field $u, v$ and the stress field $\sigma_x, \tau_{x y}, \sigma_y$ for Examples \ref{example:4.8} and \ref{example:4.9}, respectively.
\begin{table}[!htbp]
  \begin{center}
  \caption{Results of the elasticity problem over a complex geometry in Figure \ref{fig:double_holes_elasticity} using the CSQRP-LSQR method. The number of columns $n$ of $\mathbf{A}$ is 12800.}\label{tab:my_double_holes}%
  \setlength{\tabcolsep}{3.50mm} 
  \begin{tabular}{llllllll}
  \toprule
  Initial&  $m$ & $164806$ & $267658$ & $349992$  &$493988$  & $724344$ & $856012$\\
  \midrule
  $\mathbf{0}$ &PCPU & $119.73$ & $153.67$ &$213.26$  & $288.52$ & $379.02$ & $420.17$\\
  &CPU & $118.16$ & $179.39$ &$138.02$  & $204.90$ & $272.47$ & $405.88$\\
  &TCPU & $237.89$ & $333.06$ &$351.28$  & $493.42$ & $651.49$ & $826.05$\\
  &IT & $78$ & $65$ &$52$  & $46$ & $43$ & $59$\\
  &$u$ error  & $1.00e-01$ & $5.98e-02$ &$2.17e-02$  & $7.04e-03$ & $4.17e-03$ & Reference\\
  &$v$ error  & $1.07e-01$ & $6.60e-02$ &$2.31e-02$  & $9.55e-03$ & $2.83e-03$ & Reference\\
  &$\sigma_x$ error  & $1.01e-01$ & $6.10e-02$ &$2.21e-02$  & $8.72e-03$ & $4.34e-03$ & Reference\\
  &$\sigma_y$ error  & $1.01e-01$ & $6.90e-02$ &$3.22e-02$  & $1.51e-02$ & $8.88e-03$ & Reference\\
  &$\tau_{xy}$ error & $1.12e-01$ & $7.44e-02$ &$2.82e-02$  & $1.62e-02$ & $3.97e-03$ & Reference\\
  \midrule 
  $\mathbf{Q}^T(\mathbf{Sb})$ &PCPU & $130.23$ & $174.91$ &$218.26$  & $302.12$ & $395.13$ & $446.18$\\
  &CPU & $46.05$ & $80.80$ &$88.93$  & $138.97$ & $304.29$ & $184.46$\\
  &TCPU & $176.28$ & $255.71$ &$307.19$  & $441.09$ & $699.42$ & $630.64$\\
  &IT & $41$ & $33$ &$33$  & $42$ & $45$ & $29$\\
  \bottomrule
  \end{tabular}
  \end{center}
\end{table}
\begin{table}[!htbp]
  \begin{center}
  \caption{Results of the elasticity problem over a complex geometry in Figure \ref{fig:complex_geometry_domain_elasticity} using the CSQRP-LSQR method. The number of columns $n$ of $\mathbf{A}$ is 25600.}\label{tab:complex_domain_elasticity}%
  \setlength{\tabcolsep}{3.50mm}  
  \begin{tabular}{llllllll}
  \toprule
  Initial &  $m$ & $344126$ & $520582$ & $732988$  &$1072526$  & $1167574$ & $1266678$\\
  \midrule
  $\mathbf{0}$ &PCPU & $681.43$ & $796.20$ &$1233.40$  & $1422.82$ & $1633.54$ & $1726.92$\\
  &CPU & $186.02$ & $275.43$ &$329.71$  & $487.12$ & $530.29$ & $722.10$\\
  &TCPU & $867.45$ & $1071.63$ &$1563.11$  & $1909.94$ & $2163.83$ & $2449.10$\\
  &IT & $41$ & $40$ &$34$  & $32$ & $32$ & $32$\\
  &$u$ error & $1.28e-01$ & $8.69e-02$ &$5.59e-02$  & $3.70e-02$ & $9.26e-03$ & Reference\\
  &$v$ error & $8.79e-02$ & $1.09e-01$ &$4.54e-02$  & $7.15e-02$ & $1.34e-02$ & Reference\\
  &$\sigma_x$ error & $5.82e-02$ & $4.49e-02$ &$2.90e-02$  & $2.80e-02$ & $4.50e-03$ & Reference\\
  &$\sigma_y$ error & $1.14e-01$ & $9.97e-02$ &$7.05e-02$  & $2.80e-02$ & $1.13e-02$ & Reference\\
  &$\tau_{xy}$ error & $1.20e-01$ & $9.03e-02$ &$5.87e-02$  & $5.39e-02$ & $9.50e-03$ & Reference\\
  \midrule
  $\mathbf{Q}^T(\mathbf{Sb})$ &PCPU & $704.40$ & $806.10$ &$1286.44$  & $1482.52$ & $1656.59$ & $1738.34$\\
  &CPU & $173.82$ & $252.80$ &$339.99$  & $466.93$ & $528.95$ & $728.64$\\
  &TCPU & $878.22$ & $1058.90$ &$1626.43$  & $1949.45$ & $2185.54$ & $2466.98$\\
  &IT & $38$ & $35$ &$35$  & $30$ & $31$ & $32$\\
  \bottomrule
  \end{tabular}
  \end{center}
\end{table}
\begin{figure}[!htbp]
  \centering
  \begin{subfigure}[b]{0.8\textwidth}
    \includegraphics[width=\textwidth]{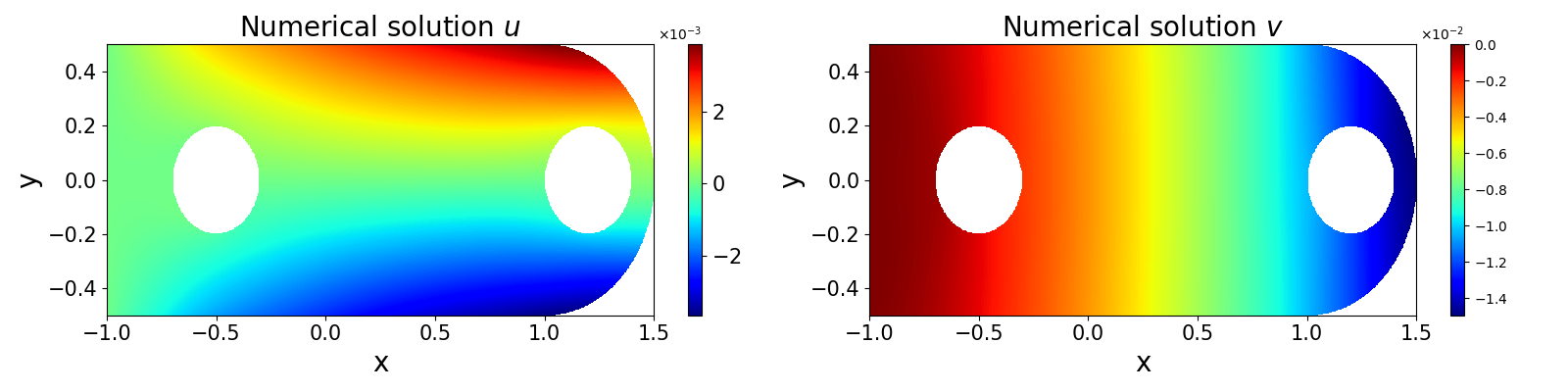}
  \end{subfigure}
  \medskip 
  \begin{subfigure}[b]{1.0\textwidth}
    \includegraphics[width=\textwidth]{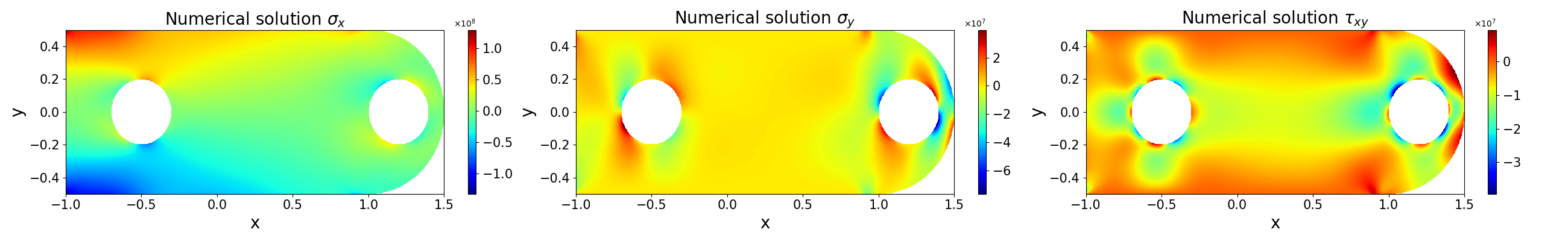}
  \end{subfigure}
  \caption{Reference solution obtained by the the CSQRP-LSQR method with $\mathbf{x}^{(0)}=\mathbf{0}$ for the two-dimensional elasticity problem.}
  \label{fig:double_holes_elasticity}
\end{figure}
\begin{figure}[!htbp]
    \centering
    \begin{subfigure}[b]{0.65\textwidth}
      \includegraphics[width=\textwidth]{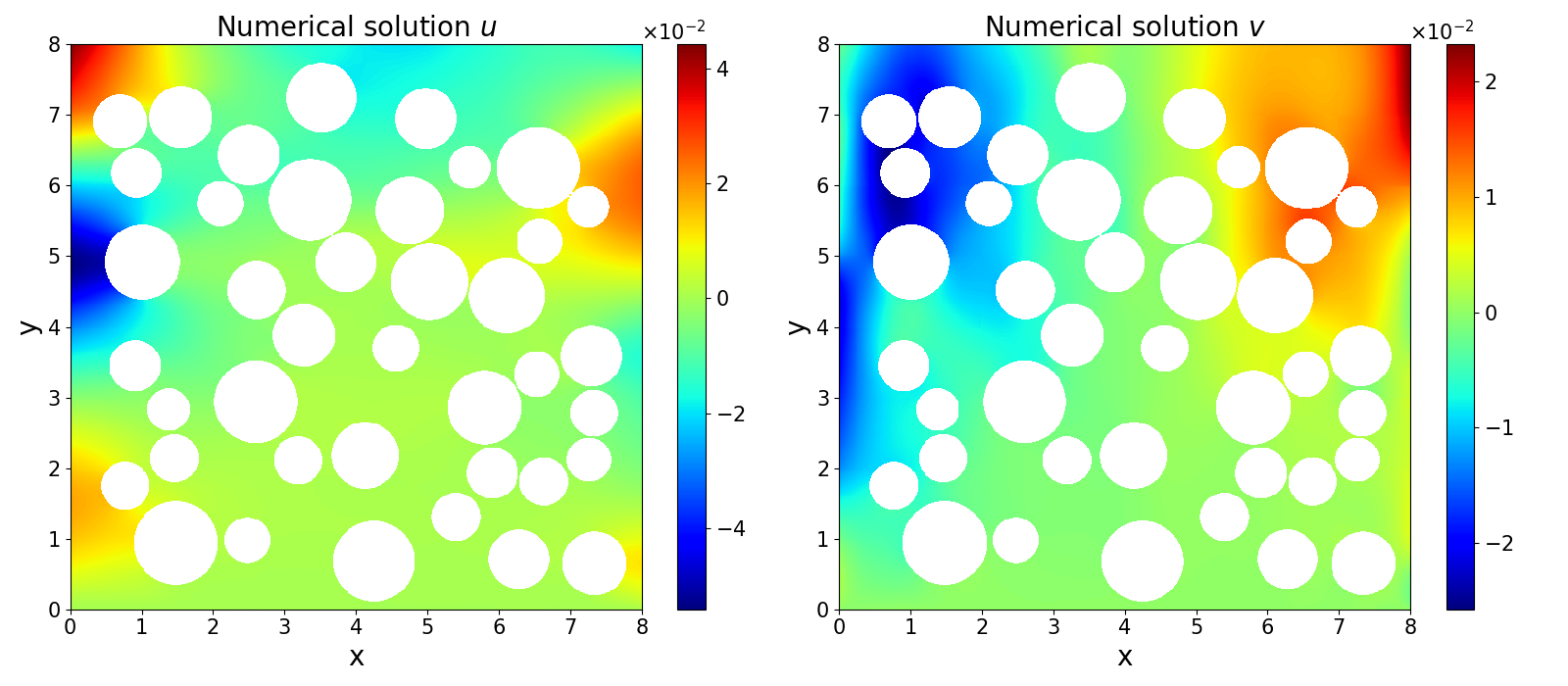}
    \end{subfigure}
    \medskip 
    \begin{subfigure}[b]{1.00\textwidth}
      \includegraphics[width=\textwidth]{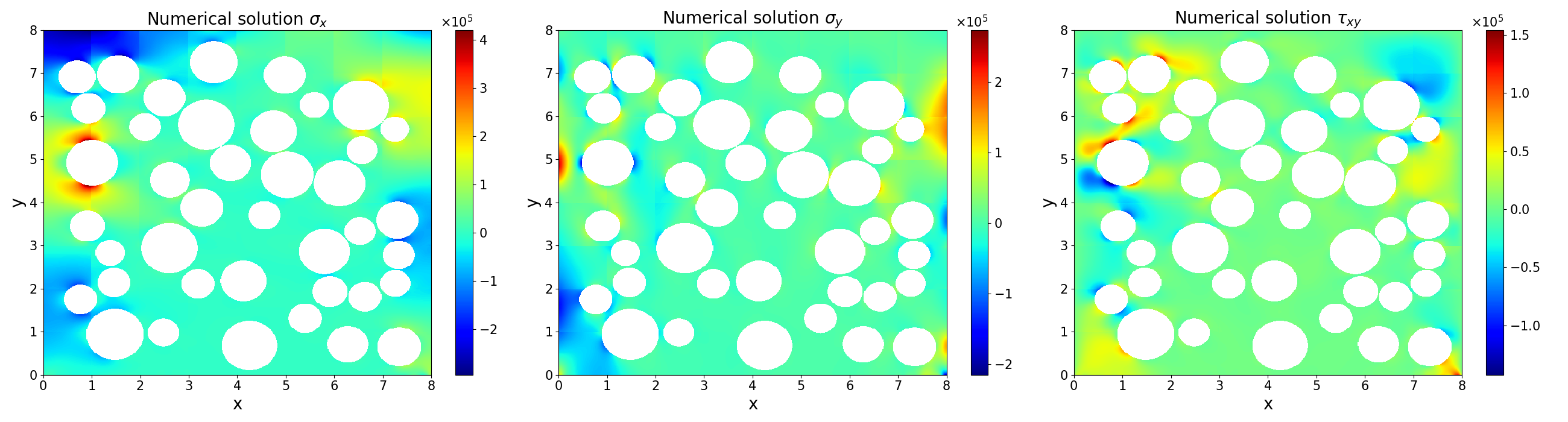}
    \end{subfigure}
    \caption{Reference solution obtained by the CSQRP-LSQR method with $\mathbf{x}^{(0)}=\mathbf{0}$ for the two-dimensional elasticity problem over a complex  geometry in Figure \ref{fig:complex_geometry_domain_elasticity}.}
    \label{fig:complex_geometry_domain}
  \end{figure}

Table \ref{tab:two-dimensional_the_homogenization_problem} compares the numerical performance of CSQRP-LSQR, CSSVDP-LSQR, CSSVD-PLSQR, LSRN-LSQR, and QR methods for the two-dimensional homogenization problem. The key metrics include the solution accuracy, the condition number of the preconditioned linear systems for CSQRP-LSQR and CSSVDP-LSQR, the iteration count, and the preprocessing time. It is observed that the condition numbers of $\mathbf{A}, \mathbf{A} \mathbf{R}^{-1}$, and $\mathbf{A P}$ range from $2.29 \times 10^{19}$ to $1.63 \times 10^{20}, 2.64 \times 10^3$ to $2.80 \times 10^5$, and 2.8233 to 5.4632, respectively. This indicates that while CSQRP-LSQR reduces condition numbers, some ill-conditioning remains, leading to slow LSQR convergence. However, the CSSVDP-LSQR method effectively transforms the highly ill-conditioned problem into a well-conditioned one during the preprocessing stage, greatly enhancing the stability and accuracy during iterations.
As the size of $\mathbf{A}$ increases, the CSSVDP-LSQR and CSSVD-PLSQR methods require significantly less computational time than the QR method. The preprocessing time for the CSSVD-PLSQR method is less than that of the CSSVDP-LSQR method; however, the latter achieves a solution accuracy that is 5 to 6 orders of magnitude higher than the former. Relative errors for CSSVDP-LSQR, QR, CSSVD-PLSQR, and LSRN-LSQR are approximately $10^{-10}, 10^{-5}, 10^{-4}$, and $10^{-4}$, respectively. Compared to the CSSVDP-LSQR method, the QR, LSRN-LSQR, and CSSVD-PLSQR methods exhibit numerical instability when solving severely ill-conditioned systems. This further indicates that explicitly forming a preconditioned matrix is a viable strategy for obtaining high-precision solutions to such highly ill-conditioned least squares problems.
In Table \ref{tab:two-dimensional_the_homogenization_problem}, ``$-$'' indicates memory overflow during the LSRN-LSQR preprocessing stage.

In Table \ref{tab:two-dimensional_the_homogenization_problem}, using the numerical solution from the largest parameter setting as the reference, the relative $L^2$ errors for $u$, $u_x$, and $u_y$ obtained by the CSSVDP-LSQR method are $3.23 \times 10^{-2}, 3.04 \times 10^{-2}$, and $3.12 \times 10^{-2}$, respectively, demonstrating clear numerical convergence. Figure \ref{fig:homogenization_A_AP_AN_singular_values_distrition} shows the singular value distribution of $\mathbf{A}$  with $n=303246$ and $m=19200$, $\mathbf{A R}^{-1}$, and $\mathbf{A P}$. The singular values of $\mathbf{A}$ are widespread, indicating a highly ill-conditioned system. However, the CSSVDP-LSQR method significantly improves this distribution through preconditioning, which is reflected in Table \ref{tab:two-dimensional_the_homogenization_problem}. Figure \ref{fig:the_two-dimensional_the_homogenization_problem} presents the numerical solutions and their first-order derivatives for the homogenization equation obtained using the CSSVDP-LSQR method.
\begin{table}[!htbp]
    \begin{center}
    \caption{Results of the two-dimensional homogenization problem using CSQRP-LSQR, CSSVDP-LSQR, CSSVD-PLSQR, LSRN-LSQR, and QR methods. The number of columns $n$ of $\mathbf{A}$ is 19200.}\label{tab:two-dimensional_the_homogenization_problem}%
    \setlength{\tabcolsep}{2.00mm} 
    \begin{tabular}{llllllll}
    \toprule
    Method& $m$ & $303246$ & $479644$ & $819604$ &$953034$ & $1023478$ & $1172032$\\
    \midrule 
    CSQRP-LSQR &$\kappa(\mathbf{A})$& $1.47e+20$ & $9.05e+19$ &$1.63e+20$ & $6.72e+19$ & $2.92e+19$ & $ 5.23e+19$\\
    &$\kappa(\mathbf{A}\mathbf{R}^{-1})$& $2.80e+05$ & $5.32e+03$ &$4.54e+03$ & $5.58e+03$ & $4.44e+03$ & $2.46e+03$\\
    \midrule
    CSSVDP-LSQR &$\kappa(\mathbf{A}\mathbf{P})$& $5.4632$ & $2.9929$ &$ 3.0157$ & $ 2.8735$ & $ 3.1643$ & $ 2.8233$\\
    &PCPU & $781.30$ & $861.85$ &$1036.53$ & $1161.91$ & $1171.12$ & $1238.09$\\
    &CPU & $59.00$ & $90.25$ &$148.46$ & $172.53$ & $188.36$ & $212.38$\\
    &TCPU & $840.30$ & $952.10$ &$1184.99$ & $1334.44$ & $1359.48$ & $1450.47$\\
    &IT & $30$ & $29$ &$28$ & $28$ & $28$ & $28$\\
    &$\frac{\|\mathbf{r}_k\|_2^2}{\|\mathbf{b}\|_2^2}$& $1.12e-09$ & $8.28e-10$ &$3.64e-10$ & $3.11e-10$ & $2.88e-10$ & $2.51e-10$\\
    &$u$ error & $3.54e-01$ & $9.51e-02$ &$6.93e-02$ & $3.85e-02$ & $3.23e-02$ & Reference\\
    &$u_{x}$ error & $3.15e-01$ & $1.31e-01$ &$5.88e-02$ & $3.49e-02$ & $3.04e-02$ & Reference\\
    &$u_{y}$ error & $3.24e-01$ & $1.38e-01$ &$5.93e-02$ & $3.58e-02$ & $3.12e-02$ & Reference\\
    \midrule
    CSSVD-PLSQR &PCPU & $681.39$ & $700.04$ &$761.89$ & $802.85$ & $951.13$ & $1051.13$\\
    &CPU & $95.42$ & $119.79$ &$168.13$ & $207.96$ & $228.46$ & $250.78$\\
    &TCPU & $776.81$ & $819.83$ &$930.02$ & $1010.75$ & $1179.59$ & $1301.91$\\
    &IT & $37$ & $29$ &$29$ & $28$ & $29$ & $28$\\
    &$\frac{\|\mathbf{r}_k\|_2^2}{\|\mathbf{b}\|_2^2}$& $3.09e-04$ & $3.45e-04$ &$3.29e-04$ & $2.74e-04$ & $2.70e-04$ & $2.94e-04$\\
    \midrule
    LSRN-LSQR &PCPU & $1538.29$ & $1810.62$ &$2370.41$ & $2599.71$ & $-$ & $-$\\
    &CPU & $87.85$ & $140.83$ &$167.93$ & $225.19$ & $-$ & $-$\\
    &TCPU & $1626.14$ & $1951.45$ &$2538.34$ & $2824.90$ & $-$ & $-$\\
    &IT & $28$ & $28$ &$28$ & $28$ & $-$ & $-$\\
    &$\frac{\|\mathbf{r}_k\|_2^2}{\|\mathbf{b}\|_2^2}$& $3.08e-04$ & $3.45e-04$ &$3.00e-04$ & $2.74e-04$ & $-$ & $-$\\
    \midrule 
    QR &TCPU & $624.86$ & $1123.06$ &$1566.07$ & $2804.10$ & $ 3103.46$ & $3163.95$\\
    &$\frac{\|\mathbf{r}_k\|_2^2}{\|\mathbf{b}\|_2^2}$& $5.92e-05$ & $5.73e-05$ &$7.97e-05$ & $5.65e-05$ & $ 5.17e-05$ & $ 7.81e-05$\\
    \bottomrule
    \end{tabular}
    \end{center}
\end{table}
\begin{figure}[!htbp]
  \renewcommand\figurename{Figure}
    \centering
    \includegraphics[scale=0.28]{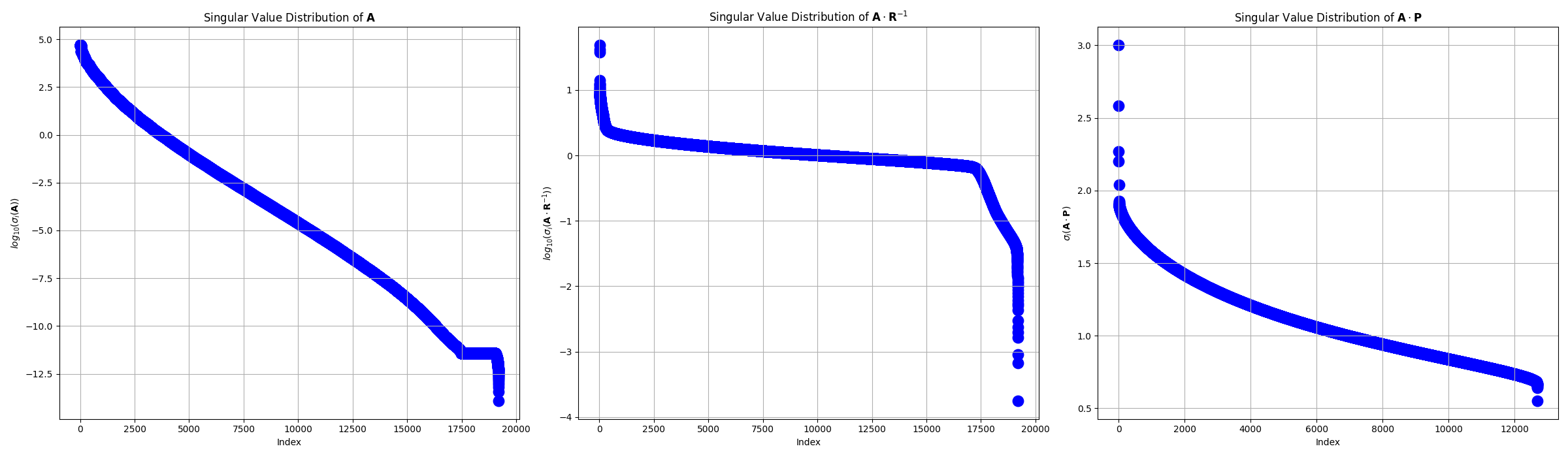}
  \caption{Distribution of singular values for $\mathbf{A}$, $\mathbf{A}\mathbf{R}^{-1}$, and $\mathbf{A}\mathbf{P}$ in the two homogenization problem.}
  \label{fig:homogenization_A_AP_AN_singular_values_distrition}
\end{figure}
\begin{figure}[!htbp]
  \renewcommand\figurename{Figure}
    \centering
    \includegraphics[scale=0.28]{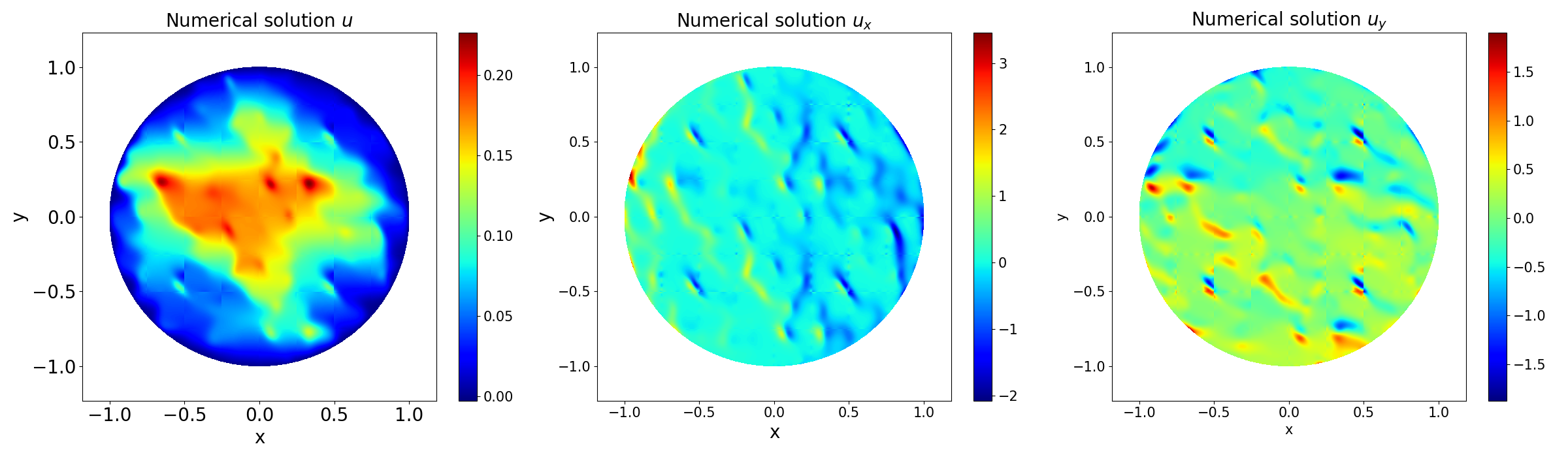}
  \caption{Reference solution and its first-order derivatives obtained by the CSSVDP-LSQR method for the two-dimensional homogenization problem.}
  \label{fig:the_two-dimensional_the_homogenization_problem}
\end{figure}

Results for two-dimensional Stokes flow with four sets of complex obstacles under inhomogeneous boundary conditions are presented in Tables in Tables \ref{tab:16_common_stokes_table}, \ref{tab:16_holes_complex_Stokes}, \ref{tab:20_complex_stokes_table}, and \ref{tab:15_holes_complex_Stokes}. These results include the preprocessing time, iteration counts, and the relative $L^2$ errors in the velocity fields $u, v$, and pressure $p$. Table \ref{tab:20_complex_stokes_table} shows that the condition numbers of $\mathbf{A}$ range from $7.85 \times$ $10^{17}$ to $3.46 \times 10^{18}$, indicating severe ill-conditioning. The condition numbers for $\mathbf{A} \mathbf{R}^{-1}$ and $\mathbf{A P}$ range from 117.42 to 218.40 and 2.7238 to 3.8653, respectively, indicating that the CSQRP-LSQR method is slightly less effective than the CSSVDP-LSQR method in handling highly ill-conditioned problems. Although the CSQRP-LSQR method has a shorter preprocessing time compared to the CSSVDP-LSQR method, the condition number of $\mathbf{A} \mathbf{R}^{-1}$ remains high (ranging from 117.42 to 218.40), leading to 652 to 750 iterations. In contrast, the CSSVDP-LSQR method shows excellent preconditioning, resulting in significantly fewer iterations and shorter times compared to CSQRP-LSQR. Table \ref{tab:16_common_stokes_table} lists the relative $L^2$ errors of the velocity field $u, v$ and pressure $p$ for the CSQRP-LSQR and CSSVDP-LSQR methods, showing clear numerical convergence. Figures \ref{fig:16_common_holes_Stokes_problem} and \ref{fig:16_com_holes_Stokes_problem} present the reference solutions obtained by the CSQR-PLSQR and CSSVDP-LSQR methods, respectively.

Tables \ref{tab:16_holes_complex_Stokes}, \ref{tab:20_complex_stokes_table}, and \ref{tab:15_holes_complex_Stokes} present the numerical results of the CSSVDP-LSQR method for the other three obstacles, which are consistent with the observations in Table \ref{tab:16_common_stokes_table}. The method converges within 24 to 28 iterations. The relative $L^2$ errors for $u, v$, and $p$ show a clear convergence trend.  Figures \ref{fig:16_holes_complex_Stokes}, \ref{fig:20_holes_complex_Stokes}, and \ref{fig:15_holes_complex_Stokes} visualize the reference solutions for $u, v$, and $p$ in channel flows with non-uniformly distributed obstacles.
\begin{table}[!htbp]
  \begin{center}
  \caption{Results of the Stokes flow over complex geometry \ref{fig:16_com_holes_Stokes_problem} using CSQRP-LSQR and CSSVDP-LSQR methods. The column size of $\mathbf{A}$ is 22500.}\label{tab:16_common_stokes_table}%
  \setlength{\tabcolsep}{2.50mm}  
  \begin{tabular}{llllllll}
  \toprule
  Method&  $m$ & $313476$ & $445373$ & $599468$  &$776977$  & $840340$ & $906461$\\
  \midrule 
  CSQRP-LSQR &$\kappa(\mathbf{A})$& $1.32e+18$ & $1.19e+18$ &$1.01e+18$  & $ 1.22e+18$ & $7.85e+17$ & $3.46e+18$\\
  &$\kappa(\mathbf{A}\mathbf{R}^{-1})$& $218.40$ & $128.78$ &$ 127.88$  & $129.89$ & $117.42$ & $ 136.87$\\
  &PCPU & $ 440.16$ & $568.63$ &$711.32$  & $ 855.66$ & $ 921.18$ & $993.60$\\
  &CPU &$2860.43$ & $3619.91$ &$ 4617.92$  & $6197.32$ & $6215.43$ & $ 7058.17$\\
  &TCPU & $3300.59$ & $4188.53$ &$5329.24$  & $7052.98$ & $7136.61$ & $8051.77$\\
  &IT & $780$ & $706$ &$674$  & $700$ & $652$ & $684$\\
  &$u$ error  & $2.29e-02$ & $2.82e-02$ &$2.03e-02$  & $1.68e-02$ & $1.86e-02$ & Reference\\
  &$v$ error & $2.78e-02$ & $2.60e-02$ &$2.60e-02$  & $2.49e-02$ & $2.52e-02$ & Reference\\
  &$p$ error & $1.39e-02$ & $1.14e-02$ &$7.83e-03$  & $5.57e-03$ & $3.00e-03$ & Reference\\
  \midrule
  CSSVDP-LSQR &$\kappa(\mathbf{A}\mathbf{P})$& $3.8653$ & $3.0579$ &$2.9653$  & $2.7856$ & $2.7238$ & $2.8207$\\
  &PCPU & $1135.58$ & $1184.20$ &$1304.90$  & $1424.90$ & $ 1534.55$ & $1557.37$\\
  &CPU & $62.34$ & $81.20$ &$ 119.86$  & $ 129.89$ & $144.77$ & $ 156.01$\\
  &TCPU & $1197.92$ & $1265.40$ &$1424.76$  & $1554.79$ & $1679.32$ & $1713.38$\\
  &IT & $28$ & $25$ &$25$  & $24$ & $24$ & $24$\\
  &$u$ error & $8.25e-02$ & $5.82e-02$ &$4.44e-02$  & $9.83e-03$ & $1.37e-02$ & Reference\\
  &$v$ error & $1.15e-01$ & $8.26e-02$ &$7.49e-02$  & $3.09e-02$ & $4.09e-02$ & Reference\\
  &$p$ error & $1.20e-01$ & $6.91e-02$ &$6.17e-02$  & $3.25e-02$ & $2.30e-02$ & Reference\\
  \bottomrule
  \end{tabular}
  \end{center}
\end{table}
\begin{table}[!htbp]
  \begin{center}
    \caption{Results of the Stokes flow over a complex geometry in Figure \ref{fig:16_holes_complex_Stokes} using the CSSVDP-LSQR method. The column size of $\mathbf{A}$ is 22500.}\label{tab:16_holes_complex_Stokes}%
  \setlength{\tabcolsep}{4.50mm} 
  \begin{tabular}{lllllll}
  \toprule
  $m$ & $336111$ & $477800$ & $643604$  &$835369$  & $903583$ & $975620$\\
  \midrule
  PCPU & $1201.83$ & $1249.37$ & $1334.05$  &$1467.43$  & $1562.45$ & $1575.99$\\
  CPU & $71.24$ & $94.06$ & $121.75$  &$147.00$  & $166.39$ & $174.85$\\
  TCPU & $1273.07$ & $1343.43$ & $1455.80$  &$1614.43$  & $1728.84$ & $1750.84$\\
  IT & $28$ & $27$ & $26$  &$24$  & $24$ & $24$\\
  $u$ error & $9.99e-02$ & $7.58e-02$ &$5.38e-02$  & $1.69e-02$ & $2.61e-02$ & Reference\\
  $v$ error & $1.11e-01$ & $8.29e-02$ &$5.46e-02$  & $2.20e-02$ & $3.60e-02$ & Reference\\
  $p$ error & $1.95e-01$ & $1.47e-01$ &$1.20e-01$  & $3.01e-02$ & $5.67e-02$ & Reference\\
  \bottomrule
  \end{tabular}
  \end{center}
\end{table}
\begin{table}[!htbp]
  \begin{center}
  \caption{The numerical results obtained by solving the Stokes flow over complex geometry \ref{fig:20_holes_complex_Stokes} using the CSSVDP-LSQR method with $\gamma=3$. The number of columns $n$ of the coefficient matrix $\mathbf{A}$ is 22500.}\label{tab:20_complex_stokes_table}%
  \setlength{\tabcolsep}{4.50mm}  
  \begin{tabular}{lllllll}
  \toprule
  $m$ & $349709$ & $496998$ & $669864$  &$869509$  & $940488$ & $1015462$\\
  \midrule
  PCPU & $1177.86$ & $1233.16$ &$1330.98$  & $1528.53$ & $1587.18$ & $1660.68$\\
  CPU & $68.32$ & $108.52$ &$121.59$  & $158.98$ & $172.02$ & $182.69$\\
  TCPU & $1246.18$ & $1341.68$ &$1452.57$  & $1678.51$ & $1759.20$ & $1843.37$\\
  IT & $26$ & $25$ &$24$  & $24$ & $24$ & $24$\\
  $u$ error & $1.18e-01$ & $8.02e-02$ &$6.55e-02$  & $4.10e-02$ & $1.83e-02$ & Reference\\
  $v$ error & $2.24e-01$ & $1.52e-01$ &$1.19e-01$  & $8.93e-02$ & $6.70e-02$ & Reference\\
  $p$ error & $9.41e-01$ & $8.41e-01$ &$5.57e-01$  & $9.78e-01$ & $2.57e-01$ & Reference\\
  \bottomrule
  \end{tabular}
  \end{center}
\end{table}
\begin{table}[!htbp]
  \begin{center}
    \caption{Results of the Stokes flow over a complex geometry in Figure \ref{fig:15_holes_complex_Stokes} using the CSSVDP-LSQR method. The column size of $\mathbf{A}$ is 22500.}\label{tab:15_holes_complex_Stokes}%
  \setlength{\tabcolsep}{4.50mm}  
  \begin{tabular}{lllllll}
  \toprule
  $m$ & $409756$ & $583696$ & $788104$  &$1023784$  & $1108346$ & $1196940$\\
  \midrule
  PCPU & $1312.34$ & $1345.58$ & $1579.68$  &$1617.98$  & $1663.33$ & $1763.69$\\
  CPU & $86.40$ & $118.25$ & $154.32$  &$191.86$  & $204.38$ & $241.20$\\
  TCPU & $1398.74$ & $1463.83$ & $1734.00$  &$1809.84$  & $1867.71$ & $2004.89$\\
  IT & $26$ & $26$ & $25$  &$24$  & $24$ & $24$\\
  $u$ error & $1.28e-02$ & $1.10e-02$ &$8.45e-03$  & $7.53e-03$ & $4.78e-03$ & Reference\\
  $v$ error & $4.33e-02$ & $3.06e-02$ &$2.46e-02$  & $2.86e-02$ & $2.13e-02$ & Reference\\
  $p$ error & $2.58e-02$ & $1.69e-02$ &$7.29e-03$  & $4.94e-02$ & $3.49e-02$ & Reference\\
  \bottomrule
  \end{tabular}
  \end{center}
\end{table}
\begin{figure}[!htbp]
  \renewcommand\figurename{Figure}
    \centering
    \includegraphics[scale=0.26]{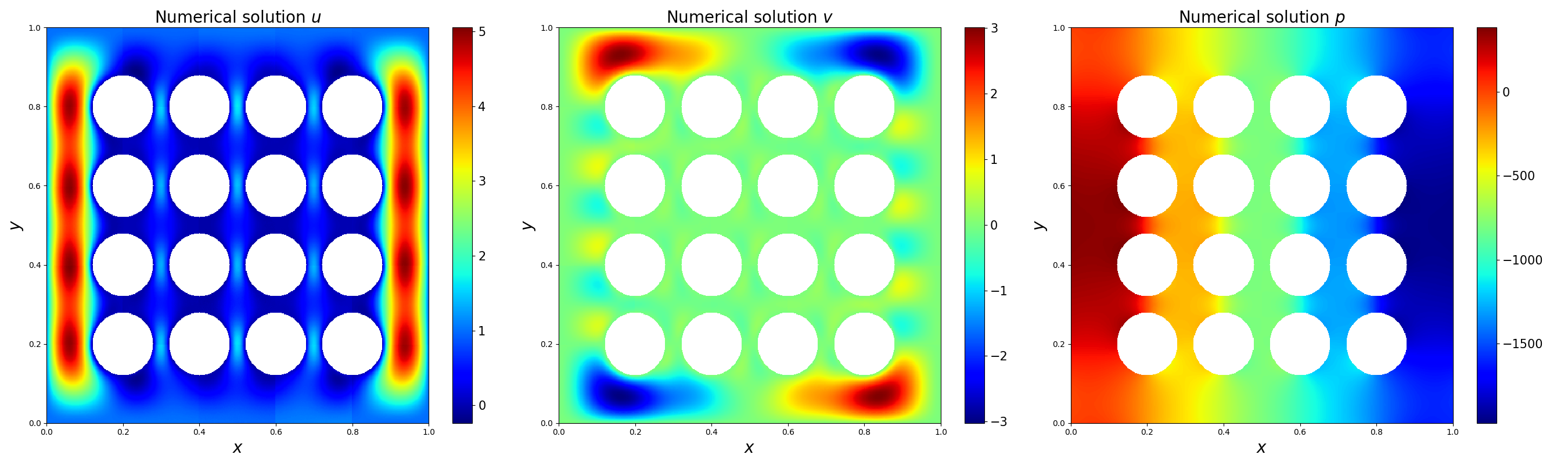}
  \caption{Reference solution by the CSQRP-LSQR method for Stokes flow problem over the complex geometry.}
  \label{fig:16_common_holes_Stokes_problem}
\end{figure}
\begin{figure}[!htbp]
  \renewcommand\figurename{Figure}
    \centering
    \includegraphics[scale=0.26]{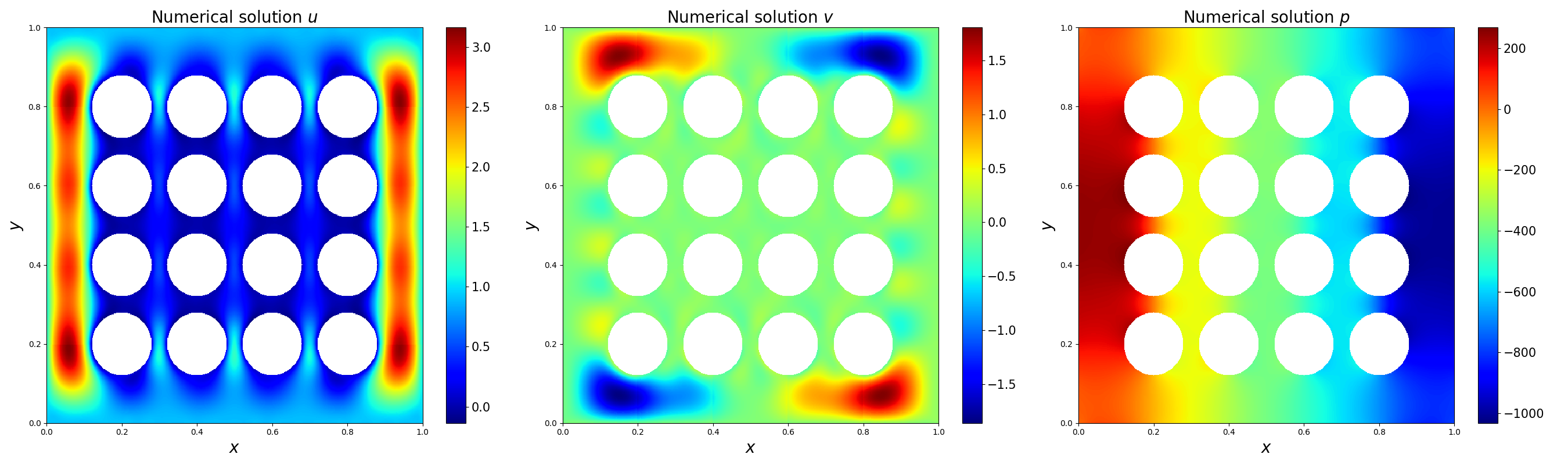}
  \caption{Reference solution obtained by the CSSVDP-LSQR method for Stokes flow problem over the complex geometry.}
  \label{fig:16_com_holes_Stokes_problem}
\end{figure}
\begin{figure}[!htbp]
  \renewcommand\figurename{Figure}
    \centering
    \includegraphics[scale=0.26]{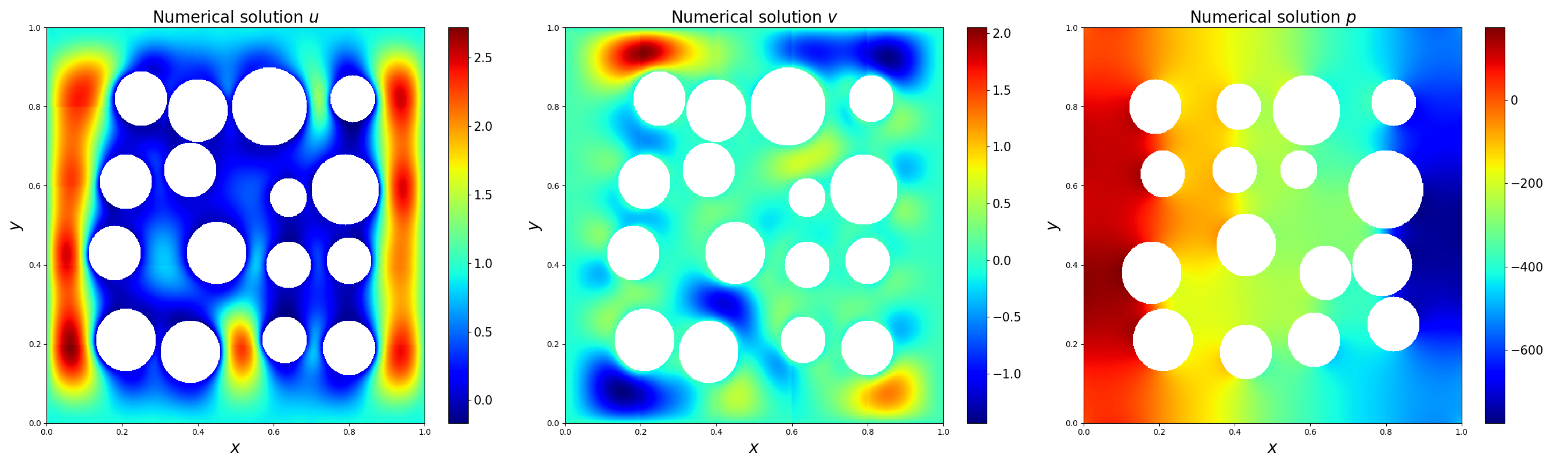}
  \caption{Reference solution by the CSSVDP-LSQR method  for Stokes flow problem over the complex geometry.}
  \label{fig:16_holes_complex_Stokes}
\end{figure}
\begin{figure}[!htbp]
  \renewcommand\figurename{Figure}
    \centering
    \includegraphics[scale=0.26]{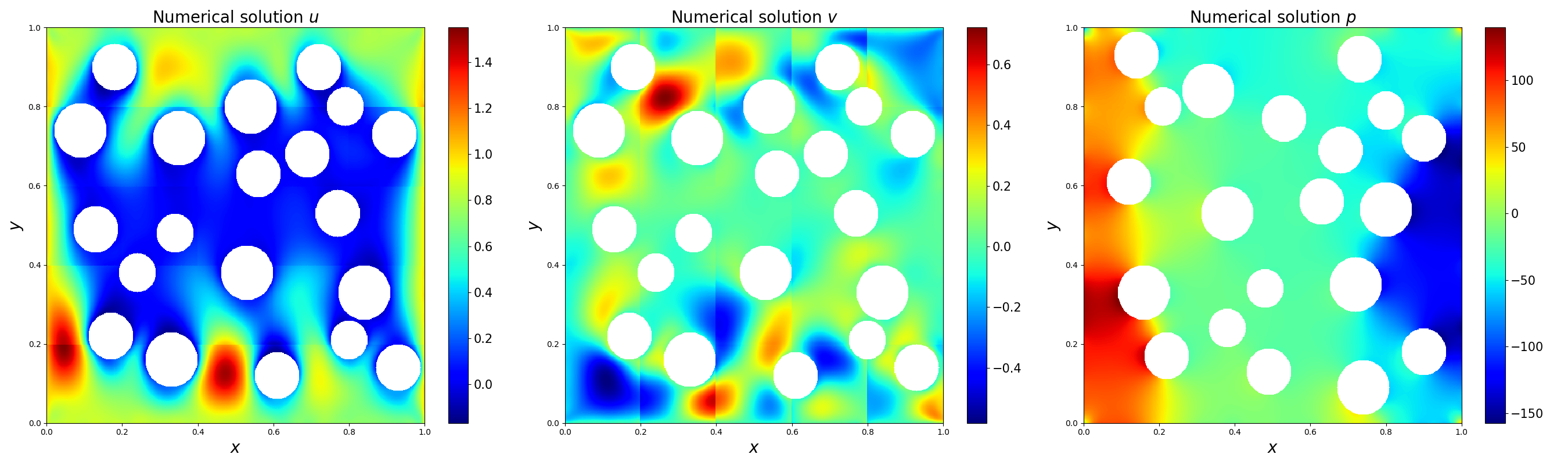}
  \caption{Reference solution by the CSSVDP-LSQR method for Stokes flow problem over the complex geometry.}
  \label{fig:20_holes_complex_Stokes}
\end{figure}
\begin{figure}[!htbp]
  \renewcommand\figurename{Figure}
    \centering
    \includegraphics[scale=0.26]{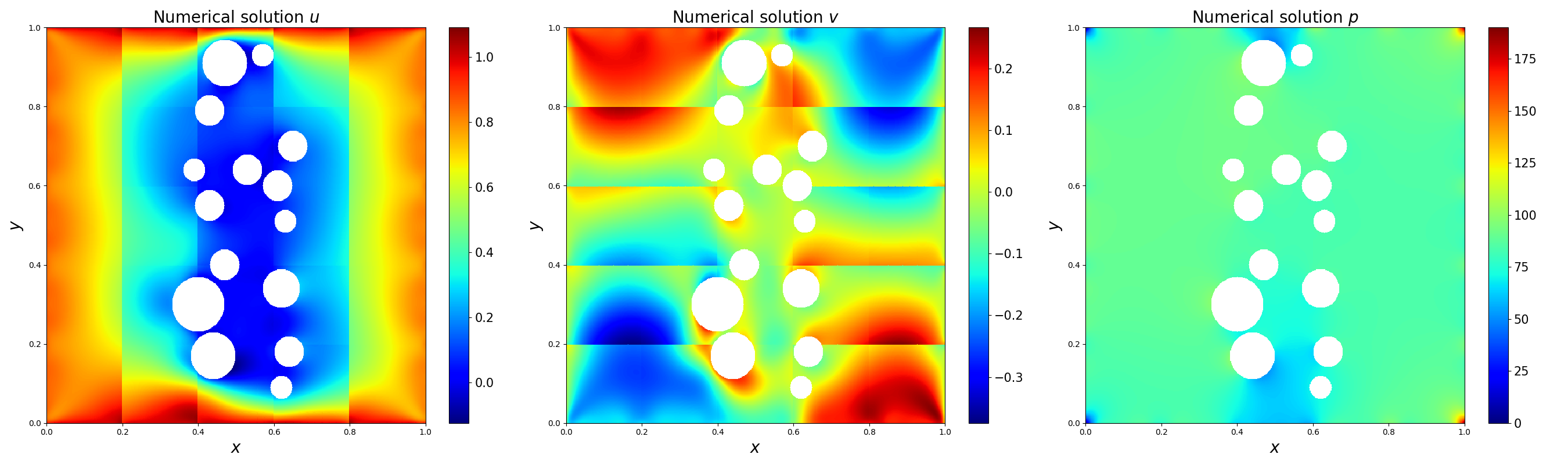}
  \caption{Reference solution by the CSSVDP-LSQR method for Stokes flow problem over the complex geometry.}
  \label{fig:15_holes_complex_Stokes}
\end{figure}
\par

Table \ref{tab:the three-dimensional Homogenization equation} presents the numerical performance of the CSQRP-LSQR method for three-dimensional homogenization problems with different $R$ values. As $R$ increases, the preprocessing time and the total computation time generally increase, while the iteration count slightly decreases, ranging from 30 to 41 iterations. The relative $L^2$ and $H^1$ errors for $u$ also increase. For  $R=2, 2.5,3, 4$, the relative $L^2$ errors for $u$ are $2.10 \times 10^{-3}, 8.69 \times 10^{-2}, 1.64 \times 10^{-1}$, and $3.01 \times 10^{-1}$, respectively, while the relative $H^1$ errors are $8.39 \times 10^{-3}, 3.48 \times 10^{-1}, 6.57 \times 10^{-1}$, and 1.20. We attribute this to the higher-frequency components in the exact solution as $R$ increases. Despite this, the CSQRP-LSQR method works well with a reasonable number of iterations.

Three different hyper-parameter settings in the RFM for the three-dimensional Poisson equation are used, with results reported in Table \ref{tab:the three-dimensional Poisson equation}. As the number of columns increases, both the preprocessing time and the total computation time increase significantly. The iteration count remains relatively stable, ranging from 34 to 36 iterations. The iteration counts indicate that the CSQRP-LSQR method works well. In Case II, using the solution with the maximum parameters as the reference, the relative $L^2$ and $H^1$ errors for $u$ are $8.36 \times 10^{-3}$ and $3.34 \times 10^{-2}$, respectively, while in the other two cases, both errors are around $10^{-2}$. Figure \ref{fig:three-dimensional_Poisson_equation_solution} shows the reference solution on a specific cross-section in Case II.
\begin{table}[!htbp]
  \begin{center}
  \caption{Results of the CSQRP-LSQR method for the three-dimensional homogenization problem with different $R$ values. The number of rows $m$ of $\mathbf{A}$ is 1264329.}\label{tab:the three-dimensional Homogenization equation}%
  \setlength{\tabcolsep}{2.50mm}  
  \begin{tabular}{lllllllll}
  \toprule
  $R$ &  $n$ & $5400$ & $10800$ & $16200$  &$21600$  & $27000$ & $32400$ & $37800$\\
  \midrule 
  $2$ &PCPU & $168.48$ & $418.87$ &$771.33$  & $1241.96$ & $1795.40$ & $2714.18$ & $3761.33$\\
  &CPU &$40.95$ & $101.06$ &$132.53$  & $167.96$ & $276.19$ & $304.87$ & $328.90$\\
  &TCPU & $209.43$ & $519.93$ &$903.86$  & $1409.92$ & $2071.59$ & $3019.05$ & $4090.23$\\
  &IT & $34$ & $36$ &$37$  & $38$ & $39$ & $41$ & $41$\\
  &$u$ error & $1.97e-01$ & $4.11e-02$ &$1.21e-02$  & $5.94e-03$ & $3.38e-03$ & $2.10e-03$ & Reference\\
  &$u$ $H^1$ & $7.86e-01$ & $1.64e-01$ &$4.85e-02$  & $2.37e-02$ & $1.35e-02$ & $8.39e-03$ & Reference\\
  \midrule
  $2.5$ &PCPU & $169.62$ & $459.77$ &$796.77$  & $1309.05$ & $1846.12$ & $2742.13$ & $3707.50$\\
  &CPU &$37.04$ & $97.62$ &$102.89$  & $155.72$ & $261.37$ & $272.62$ & $293.11$\\
  &TCPU & $206.66$ & $557.39$ &$899.66$  & $1464.77$ & $2107.49$ & $3014.75$ & $4000.61$\\
  &IT & $32$ & $32$ &$33$  & $34$ & $34$ & $35$ & $35$\\
  &$u$ error & $5.85e-01$ & $4.96e-01$ &$3.77e-01$  & $2.54e-01$ & $1.62e-01$ & $8.69e-02$ & Reference\\
  &$u$ $H^1$ & $2.34e+00$ & $1.98e+00$ &$1.51e+00$  & $1.02e+00$ & $6.48e-01$ & $3.48e-01$ & Reference\\
  \midrule
  $3.0$ &PCPU & $170.50$ & $426.84$ &$786.97$  & $1294.97$ & $1872.55$ & $2681.33$ & $3735.70$\\
  &CPU &$35.16$ & $78.86$ &$101.03$  & $145.94$ & $225.93$ & $255.92$ & $290.42$\\
  &TCPU & $205.66$ & $505.70$ &$888.00$  & $1440.91$ & $2098.48$ & $2937.25$ & $4026.12$\\
  &IT & $31$ & $31$ &$32$  & $33$ & $33$ & $33$ & $34$\\
  &$u$ error & $5.99e-01$ & $4.91e-01$ &$4.03e-01$  & $3.08e-01$ & $2.32e-01$ & $1.64e-01$ & Reference\\
  &$u$ $H^1$ & $2.40e+00$ & $1.97e+00$ &$1.61e+00$  & $1.23e+00$ & $9.29e-01$ & $ 6.57e-01$ & Reference\\
  \midrule
  $4.0$ &PCPU & $170.91$ & $420.62$ &$789.71$  & $1345.97$ & $1872.02$ & $2698.14$ & $3798.79$\\
  &CPU &$32.16$ & $75.01$ &$97.09$  & $125.53$ & $194.83$ & $ 257.60$ & $287.49$\\
  &TCPU & $203.07$ & $495.63$ &$886.80$  & $1471.50$ & $2066.85$ & $2955.74$ & $4086.28$\\
  &IT & $30$ & $30$ &$31$  & $31$ & $31$ & $32$ & $32$\\
  &$u$ error & $7.69e-01$ & $6.34e-01$ &$5.42e-01$  & $4.60e-01$ & $3.69e-01$ & $3.01e-01$ & Reference\\
  &$u$ $H^1$ & $3.07e+00$ & $2.54e+00$ &$2.17e+00$  & $1.84e+00$ & $1.47e+00$ & $1.20e+00$ & Reference\\
  \bottomrule
  \end{tabular}
  \end{center}
\end{table}
\begin{table}[!htbp]
  \begin{center}
  \caption{Results of the CSQRP-LSQR method for the three-dimensional Poisson equation over a complex geometry.}\label{tab:the three-dimensional Poisson equation}%
  \setlength{\tabcolsep}{3.50mm}
  \begin{tabular}{llllllll}
  \toprule
  Case &$(m, n)$ & PCPU & CPU & TCPU & IT & $u$ error & $u$ $H^{1}$\\
  \midrule
  I &$(1052784,1600)$ & $76.90$ & $ 12.25$ & $89.15$ & $34$ & $7.90e-02$ & $3.16e-01$\\
  &$(1052784,3200)$ & $163.16$ & $23.79$ & $186.95$ & $35$ & $5.05e-02$ & $2.02e-01$\\
  &$(1052784,4800)$ & $268.08$ & $33.46$ & $301.54$ & $35$ & $3.62e-02$ & $1.45e-01$\\
  &$(1052784,6400)$ & $366.65$ & $ 48.56$ & $415.21$ & $36$ & $3.13e-02$ & $1.25e-01$\\
  &$(1052784,8000)$ & $503.47$ & $46.97 $ & $550.44$ & $36$ & $2.80e-02$ & $1.12e-01$\\
  &$(1052784,9600)$ & $620.88$ & $66.59$ & $687.47$ & $36$ & $2.39e-02$ & $9.56e-02$\\
  &$(1052784,11200)$ & $746.17$ & $75.90$ & $822.07$ & $36$ & $1.69e-02$ & $6.78e-02$\\
  &$(1052784,12800)$ & $908.47$ & $109.85$ & $1018.32$ & $36$ & \multicolumn{2}{c}{Reference}\\
  \midrule
  II &$(1408262,1600)$ & $106.81$ & $16.51$ & $123.32$ & $34$ & $7.73e-02$ & $3.09e-01$\\
  &$(1408262,3200)$ & $226.14$ & $33.18$ & $259.32$ & $35$ & $4.59e-02$ & $1.83e-01$\\
  &$(1408262,4800)$ & $354.96$ & $45.55$ & $400.51$ & $36$ & $2.60e-02$ & $1.04e-01$\\
  &$(1408262,6400)$ & $507.11$ & $54.79$ & $561.90$ & $36$ & $1.84e-02$ & $7.36e-02$\\
  &$(1408262,8000)$ & $660.80$ & $74.82 $ & $735.62$ & $36$ & $1.30e-02$ & $5.20e-02$\\
  &$(1408262,9600)$ & $807.09$ & $78.40$ & $885.49$ & $36$ & $8.36e-03$ & $3.34e-02$\\
  &$(1408262,10400)$&$878.99$ & $83.86$ & $962.85$ & $36$ & \multicolumn{2}{c}{Reference}\\
  \midrule
  III &$(1704044,1600)$ & $126.56$ & $20.75$ & $147.31$ & $34$ & $7.71e-02$ & $3.09e-01$\\
  &$(1704044,3200)$ & $265.53$ &  $37.25$ & $302.78$ & $35$ & $4.43e-02$ & $1.77e-01$\\
  &$(1704044,4800)$ & $435.81$ & $54.68$ & $490.49$ & $36$ & $2.31e-02$ & $9.25e-02$\\
  &$(1704044,6400)$ & $587.09$ & $79.66$ & $666.75$ & $36$ & $1.38e-02$ & $5.51e-02$\\
  &$(1704044,7200)$ & $670.08$ & $85.39$ & $755.47$ & $36$ & $1.00e-02$ & $4.01e-02$\\
  &$(1704044,8000)$ & $764.33$ & $91.50$ & $855.83$ & $36$ &  \multicolumn{2}{c}{Reference}\\
  \bottomrule
  \end{tabular}
  \end{center}
\end{table}
\begin{figure}[!htbp]
  \renewcommand\figurename{Figure}
    \centering
    \includegraphics[scale=0.40]{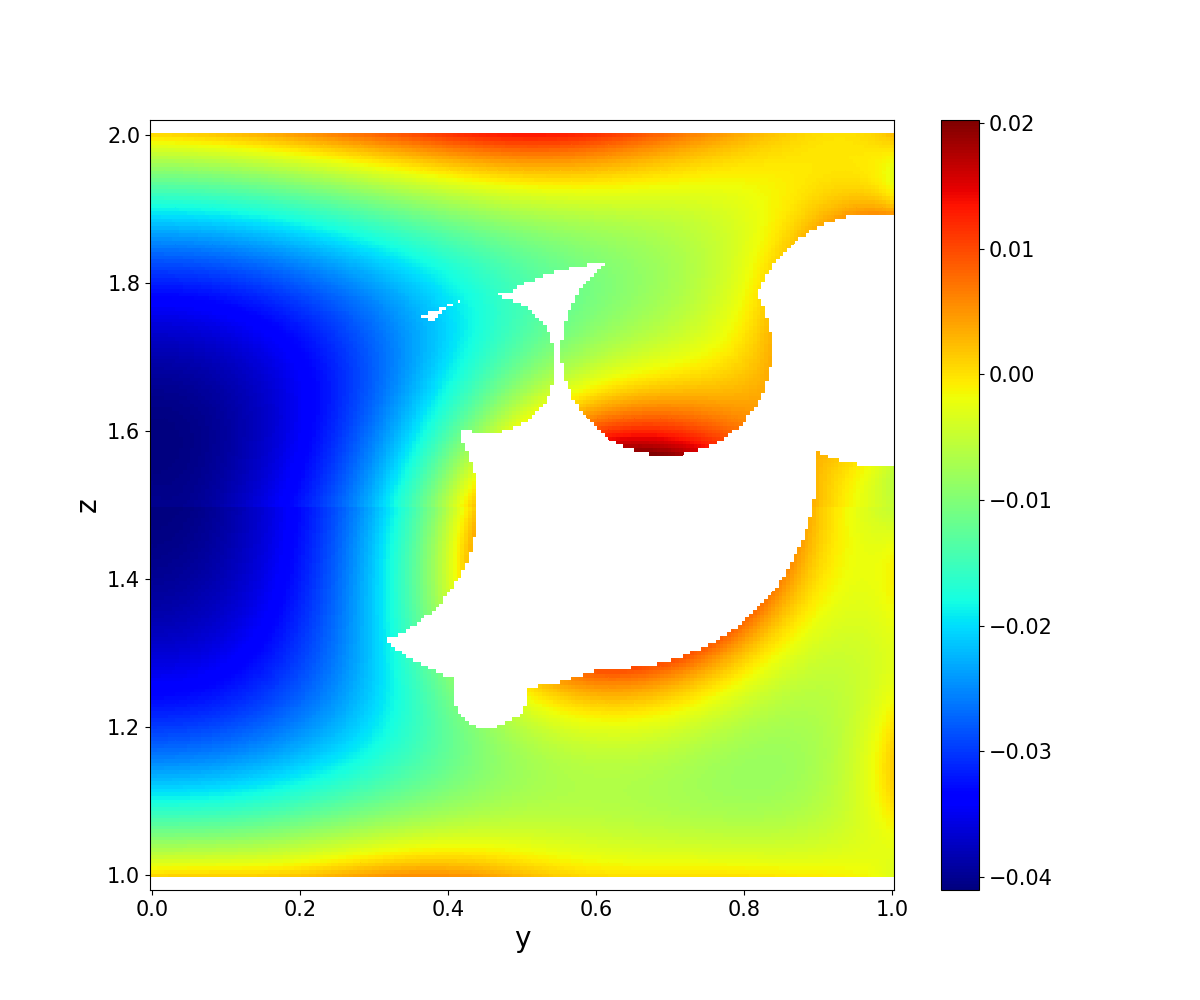}
  \caption{The reference solution (on the semi-section) of the three-dimensional Poisson problem \eqref{eq:complex_Poisson} obtained using the CSQRP-LSQR method.}
  \label{fig:three-dimensional_Poisson_equation_solution}
\end{figure}

In summary, we apply the proposed CSQRP-LSQR method to solve ill-conditioned least squares problems obtained by the RFM for two- and three-dimensional PDEs without explicit solutions. While results confirm that our method effectively solves these ill-conditioned least squares problems, it is evident that RFM faces challenges for these complex problems, particularly in three-dimensional cases. 

\section{Hyper-parameters in the RFM}\label{my_hyperparameters_RFM}
In all experiments, $R_{n j}=1$, $tanh$ is selected as the activation function, and the partition of unity (PoU) function employed is $\psi^a(\mathbf{x})$. When $d=2$,  the domain $\Omega$ is divided into $M_p = N_xN_y$ uniformly partitioned sub-domains along the $x$ and $y$ axes, with $N_x(N_x \geqslant 1)$ and $N_y(N_y \geqslant 1)$ representing the number of sub-domains, respectively. We choose a set of points $\left\{\mathbf{x}_n\right\}_{n=1}^{M_p} \subset \Omega$, where each point $\mathbf{x}_n$ serves as the center of its respective sub-domain $\Omega_n$ for constructing the PoU functions. For each point $\mathbf{x}_n$, we construct $J_n$ random feature functions with radius $\mathbf{r}_n$. We then generate $Q$ equally spaced collocation points within each sub-domain $\Omega_n$, discarding any point outside of $\Omega$. A similar setup is used when $d=3$. When $M_p>1$, $\mathbf{A}$ is sparse, and its sparsity increases as $M_p$ increases. 

\begin{table}[!htbp]
  \begin{center}
  \caption{Hyper-parameters in the RFM for the elasticity and Stokes flows problems in two dimensions.}\label{tab:Numerical_parameters_table}%
  \setlength{\tabcolsep}{4.50mm}  
  \begin{tabular}{llllll}
  \toprule
  Problem & $(N_x, N_y)$ & $\sqrt{Q}$ & $J_n$  & $\kappa(\mathbf{A})$ & $(m, n)$  \\
  \midrule
  Timoshenko beam (Table \ref{Timoshenko_beam_table}) & $(5,5)$ & $50$ &$200$  & $9.1689e+13$  & $(139000, 10000)$\\
  2D Stokes flow (Table \ref{CSSVD-PLSQR_QR_SVD_for_Stokes_flow}) & $(4,4)$ & $60$ &$300$  & $8.2740e+17$ &  $(169564, 14400)$\\
  2D Stokes flow (Table \ref{complex_geometric_2D_Stokes_flow_problem}) & $(5,5)$ & $40$ &$120$  &$1.8662e+13$ &  $(122238, 9000)$ \\
  2D Elasticity (Table \ref{complex_elasticity_table}) & $(6,6)$ & $60$ &$150$  &  $5.5545e+12$ & $(140060, 10800)$ \\
  Timoshenko beam (Table \ref{Randomized_iterative_LSQR_LSMR_for_elasticity_numerical_results}) & $(5,5)$ & $50$ &$200$  & $9.1689e+13$  & $(139000, 10000)$ \\
  2D Stokes flow (Table \ref{Randomized_iterative_LSQR_LSMR_for_Stokes_flow}) & $(4,4)$ & $60$ &$300$  & $8.2740e+17$ &  $(169564, 14400)$\\
  \toprule
  Problem & \multicolumn{4}{c}{$\mathbf{x}_n$} & $ \mathbf{r}_n $  \\
  \midrule
  Timoshenko beam (Table \ref{Timoshenko_beam_table}) & \multicolumn{4}{c}{$\left\{(-4+2i, 1+2j), i,j \in \left\{0,1,2,3,4\right\}\right\}$} &  $(1, 1)$ \\
  2D Stokes flow (Table \ref{CSSVD-PLSQR_QR_SVD_for_Stokes_flow}) & \multicolumn{4}{c}{$\left\{\left(\frac{1+2i}{8}, \frac{1+2j}{8}\right), i,j \in \left\{0,1,2,3\right\}\right\}$} &  $\left(\frac{1}{8}, \frac{1}{8}\right)$\\
  2D Stokes flow (Table \ref{complex_geometric_2D_Stokes_flow_problem}) & \multicolumn{4}{c}{$\left\{\left(\frac{1+2i}{10}, \frac{1+2j}{10}\right), i,j \in \left\{0,1,2,3, 4\right\}\right\}$}&  $(\frac{1}{10}, \frac{1}{10})$ \\
  2D Elasticity (Table \ref{complex_elasticity_table}) & \multicolumn{4}{c}{$\left\{\left(\frac{2+4i}{3}, \frac{2+4j}{3}\right), i,j \in \left\{0,1,2,3,4, 5\right\}\right\}$} & $(\frac{2}{3}, \frac{2}{3})$ \\
  Timoshenko beam (Table \ref{Randomized_iterative_LSQR_LSMR_for_elasticity_numerical_results}) & \multicolumn{4}{c}{$\left\{(-4+2i, 1+2j), i,j \in \left\{0,1,2,3,4\right\}\right\}$} &  $(1, 1)$ \\
  2D Stokes flow (Table \ref{Randomized_iterative_LSQR_LSMR_for_Stokes_flow}) & \multicolumn{4}{c}{$\left\{\left(\frac{1+2i}{8}, \frac{1+2j}{8}\right), i,j \in \left\{0,1,2,3\right\}\right\}$} &  $\left(\frac{1}{8}, \frac{1}{8}\right)$\\
  \bottomrule
  \end{tabular}
  \end{center}
\end{table}
\begin{table}[!htbp]
  \begin{center}
  \caption{Hyper-parameters in the RFM for the three-dimensional PDEs.}\label{tab:hyper-parameters_three-dimensional_PDEs}%
  \setlength{\tabcolsep}{2.50mm} 
  \begin{tabular}{lllll}
  \toprule
  Problem  & $\sqrt[3]{Q} $ & $J_n$  & $(N_x, N_y, N_z)$\\
  \midrule
  3D Poisson (Table \ref{tab:three-dimensional_Poisson_equation})& $\left\{20,20,25,25, 30, 30\right\}$& $\left\{400, 800, 800, 1200, 1600, 2000\right\}$ &$(2, 2, 2)$\\
  3D Helmholtz (Table \ref{tab:the three-dimensional Helmholtz equation}) 
  & $\left\{20,20,30,30,35, 35\right\}$& $\left\{400, 800, 800, 1000, 1200, 1400\right\}$ &$(2, 2, 2)$\\
  3D Elasticity (Table \ref{tab:three-dimensional_elasticity_problem})& $\left\{20, 20, 25, 25\right\}$ & $\left\{600, 600, 800, 1000, 1200\right\}$ & $(2,2,2)$\\
  3D Elasticity (Table \ref{tab:three-dimensional_elasticity_problem}-\ref{tab:three-dimensional_elasticity})& $\left\{20, 30\right\}$ & $\left\{400, 600\right\}$ & $(1,1,1)$\\
  3D Stokes (Table \ref{tab:three-dimensional Stokes flow})& $\left\{20, 20, 25, 25\right\}$ & $\left\{600, 600, 800, 1000, 1200\right\}$ & $(2,2,2)$\\
  3D Stokes (Table \ref{tab:three-dimensional Stokes flow}-\ref{tab:3Dtokes})& $\left\{20, 30\right\}$ & $\left\{800, 800\right\}$ & $(1,1,1)$\\
  \toprule
  Problem & \multicolumn{2}{c}{$\mathbf{x}_n$} & $ \mathbf{r}_n $  \\
  \midrule
  3D Poisson (Table \ref{tab:three-dimensional_Poisson_equation})& \multicolumn{2}{c}{$\left\{\left(\frac{1+2i}{4}, \frac{1+2j}{4}, \frac{1+2k}{4}\right), i,j, k\in \left\{0,1\right\}\right\}$} &  $\left(\frac{1}{4},\frac{1}{4},\frac{1}{4} \right)$ \\
  3D Helmholtz (Table \ref{tab:the three-dimensional Helmholtz equation})& \multicolumn{2}{c}{$\left\{\left(\frac{1+2i}{4}, \frac{1+2j}{4}, \frac{1+2k}{4}\right), i,j, k\in \left\{0,1\right\}\right\}$} &  $\left(\frac{1}{4},\frac{1}{4},\frac{1}{4} \right)$ \\
  3D Elasticity (Table \ref{tab:three-dimensional_elasticity_problem})& \multicolumn{2}{c}{$\left\{\left(\frac{5+10i}{2}, \frac{5+10j}{2}, \frac{5+10k}{2}\right), i,j, k\in \left\{0,1\right\}\right\}$} &  $\left(\frac{5}{2},\frac{5}{2},\frac{5}{2} \right)$ \\
  3D Elasticity (Table \ref{tab:three-dimensional_elasticity_problem}-\ref{tab:three-dimensional_elasticity}) & \multicolumn{2}{c}{$\left(5, 5, 5\right)$} &  $\left(5,5,5\right)$ \\
  3D Stokes (Table \ref{tab:three-dimensional Stokes flow})& \multicolumn{2}{c}{$\left\{\left(\frac{5+10i}{2}, \frac{5+10j}{2}, \frac{5+10k}{2}\right), i,j, k\in \left\{0,1\right\}\right\}$} &  $\left(\frac{5}{2},\frac{5}{2},\frac{5}{2} \right)$ \\
  3D Stokes (Table\ref{tab:three-dimensional Stokes flow}-\ref{tab:3Dtokes}) & \multicolumn{2}{c}{$\left(5, 5, 5\right)$} &  $\left(5,5,5\right)$ \\
  \bottomrule
\end{tabular}
\end{center}
\end{table}
\begin{table}[!htbp]
  \begin{center}
  \caption{Hyper-parameters in the RFM for two-dimensional PDEs without explicit solutions.}\label{tab:hyper-parameters_without_analytical_solutions_2DPDEs}%
  \setlength{\tabcolsep}{2.00mm} 
  \begin{tabular}{lllll}
  \toprule
  Problem  & $\sqrt{Q} $ & $J_n$  & $(N_x, N_y)$\\
  \midrule
  2D Elasticity (Table \ref{tab:my_double_holes})& $\left\{50, 65, 75, 90, 110, 120\right\}$& $200$ &$(8, 4)$\\
  2D Elasticity (Table \ref{tab:complex_domain_elasticity})& $\left\{60, 65, 75, 90, 115, 120\right\}$& $200$ &$(8, 8)$\\
  2D Homogenization (Table \ref{tab:two-dimensional_the_homogenization_problem}) & $\left\{75, 95, 125, 135, 140, 150\right\}$& $300$ &$(8, 8)$\\
  2D Stokes (Table \ref{tab:16_common_stokes_table}-\ref{tab:15_holes_complex_Stokes}) &$\left\{75, 90, 105, 120, 125, 130\right\}$& $300$ &$(5, 5)$\\
  \toprule
  Problem & \multicolumn{2}{c}{$\mathbf{x}_n$} & $ \mathbf{r}_n $  \\
  \midrule
  2D Elasticity (Table \ref{tab:my_double_holes}) & \multicolumn{2}{c}{$\left\{\left(-1+\frac{1+2i}{8}, -\frac{1}{2}+\frac{1+2j}{8}\right), \right.$} & $\left(\frac{1}{8}, \frac{1}{8}\right)$ \\
   & \multicolumn{2}{c}{$\left. i \in \{0, 1, 2, 3, 4, 5, 6, 7\}, j \in \{0, 1, 2, 3\} \right\}$} &  \\
  2D Elasticity (Table \ref{tab:complex_domain_elasticity})& \multicolumn{2}{c}{$\left\{\left(\frac{1+2i}{2}, \frac{1+2j}{2}\right), i,j\in \left\{0,1,2,3,4,5, 6, 7\right\}\right\}$} &  $\left(\frac{1}{2},\frac{1}{2} \right)$ \\
  2D Homogenization (Table \ref{tab:two-dimensional_the_homogenization_problem}) & \multicolumn{2}{c}{$\left\{\left(-1+\frac{1+2i}{8}, -1+\frac{1+2j}{8}\right), i,j\in \left\{0,1, 2, 3, 4, 5, 6, 7\right\}\right\}$} &  $\left(\frac{1}{8},\frac{1}{8} \right)$ \\
  2D Stokes (Table \ref{tab:16_common_stokes_table}-\ref{tab:15_holes_complex_Stokes})  & \multicolumn{2}{c}{$\left\{\left(\frac{1+2i}{10}, \frac{1+2j}{10}\right), i,j\in \left\{0,1, 2, 3, 4\right\}\right\}$} &  $\left(\frac{1}{10},\frac{1}{10} \right)$ \\
  \bottomrule
\end{tabular}
\end{center}
\end{table}
\begin{table}[!htbp]
  \begin{center}
  \caption{Hyper-parameters in the RFM for the three-dimensional PDEs without explicit solution. }\label{tab:hyper-parameters_without_analytical_solutions_3DPDEs}%
  \setlength{\tabcolsep}{1.50mm} 
  \begin{tabular}{lllll}
  \toprule
  Problem  & $\sqrt[3]{Q} $ & $J_n$  & $(N_x, N_y, N_z)$\\
  \midrule
  3D Homogenization (Table \ref{tab:the three-dimensional Homogenization equation})& $33$ & $\left\{200, 400, 600, 800, 1000, 1200, 1400\right\}$  &$(3, 3, 3)$\\
  3D Poisson (Table \ref{tab:the three-dimensional Poisson equation} case I)
  & $63$ & $\left\{200, 400, 600, 800, 1000, 1200, 1400, 1600\right\}$ &$(2, 2, 2)$\\ 
  3D Poisson (Table \ref{tab:the three-dimensional Poisson equation} case II)
  & $70$ & $\left\{200, 400, 600, 800, 1000, 1200, 1300\right\}$ &$(2, 2, 2)$\\
  3D Poisson (Table \ref{tab:the three-dimensional Poisson equation} case III)
  & $75$ & $\left\{200, 400, 600, 800, 900, 1000\right\}$ &$(2, 2, 2)$\\
  \toprule
  Problem & \multicolumn{2}{c}{$\mathbf{x}_n$} & $ \mathbf{r}_n $  \\
  \midrule
  3D Homogenization (Table \ref{tab:the three-dimensional Homogenization equation}) & \multicolumn{2}{c}{$\left\{\left(\frac{1+2i}{6}, \frac{1+2j}{6}, \frac{1+2k}{6}\right), i,j, k\in \left\{0,1, 2\right\}\right\}$} &  $\left(\frac{1}{6},\frac{1}{6},\frac{1}{6} \right)$ \\
  3D Poisson (Table \ref{tab:the three-dimensional Poisson equation})  & \multicolumn{2}{c}{$\left\{\left(-1+\frac{1+2i}{2}, -1+\frac{1+2j}{2}, 1+ \frac{1+2k}{4}\right), i,j, k\in \left\{0,1\right\}\right\}$} &  $\left(\frac{1}{2},\frac{1}{2},\frac{1}{4} \right)$ \\
  \bottomrule
\end{tabular}
\end{center}
\end{table}

\begin{table}[!htbp]
  \begin{center}
  \caption {Geometric setup in Figure \ref{fig:three-dimensional_Poisson_equation_solution} }\label{tab:Filled_Removed_holes}%
  \setlength{\tabcolsep}{4.50mm} 
  \begin{tabular}{llll}
  \toprule
    & Center coordinates & Radius  \\
  \midrule
  Removed & $(1.9575995206832886, 1.2456529587507248)$& $0.0502443732693791$ \\
          & $(2.1249395608901978, 1.5534365177154540)$& $0.2743096835911274$ \\
          & $(2.4759104847908020, 1.7237649857997894)$& $0.1705943234264851$ \\
          & $(1.9882896542549133, 1.5191256999969482)$& $0.2666174061596394$ \\
  \midrule
  Filled  & $(1.7935708165168762, 1.6423535346984863)$& $0.1327976770699024$ \\
          & $(2.1955102682113647, 1.7093743383884430)$& $0.1412965357303619$ \\
          & $(2.0465531945228577, 1.0623437911272050)$& $0.1595700718462467$ \\
          & $(1.7482689023017883, 1.4980989694595337)$& $0.1915273256599903$ \\
          & $(1.9530203938484192, 1.6899727284908295)$& $0.0919794291257858$ \\
  \bottomrule
\end{tabular}
\end{center}
\end{table}

\section{Information of test matrices}\label{test_matrices_information}
\begin{table}[!htbp]
  \begin{center}
  \caption{Information of the matrices from Florida space matrix collection}\label{florida_sparse_matrix_information}%
  \setlength{\tabcolsep}{4.500mm} 
  \begin{tabular}{lllll}
  \toprule
  Name & $(m,n)$ & $\kappa(\mathbf{A})$& $rank(\mathbf{A})$ & density  \\
  \midrule
  GL7d11 & $(1019, 60)$ & $1.65e+16$ & $59$ & $2.4746\%$  \\
  rel6 & $(2340, 157)$ & Inf & $137$ & $1.3885 \%$ \\
  aa4  & $(426, 7195)$ & $9.28e+16$ & $367$ & $1.7005\%$  \\
  relat6 & $(2340, 157)$ & Inf & $137$ & $2.2070 \%$ \\
  air05 & $(426, 7195)$ & $9.28e+16$ & $367$ & $0.1700\%$ \\
  ch8-8-b1 &$(1568, 64)$ & $3.48e+14$ & $63$ & $3.1250\%$ \\
  shar\_te2-b1 &$(17160, 286)$ & $1.56e+13$ & $285$ & $0.6993\%$  \\
  us04 & $(163, 28016)$ & Inf & $115$ & $6.5155\%$ \\
  ch8-8-b2 &$(18816, 1568)$ & $1.63e+15$ & $1505$ & $0.1913\%$  \\
  rel7 & $(21924, 1045)$ & Inf & $1012$ & $0.0024\%$  \\
  kl02 & $(71, 36699)$ & $1.40e+16$ & $64$ & $8.1568\%$  \\
  relat7b & $(21924, 1045)$ & Inf & $1012$ & $0.3551\%$  \\
  stat96v5  & $(2307, 75779)$ & Inf & $2305$ & $0.1338 \%$  \\
  shar\_te2-b2 & $(200200, 17160)$ & $1.05e+15$ & $16875$ & $0.0175\%$\\
  connectus & $(512, 394792)$ & Inf & $456$ & $0.5578\%$\\
  rel8 & $(345688, 12347)$ & Inf & $12289$ & $0.0193\%$ \\
  relat8 & $(345688, 12347)$ & Inf & $12289$ & $0.0313\%$ \\
  12month1 & $(12471, 872622)$ & Inf & $12417$ & $0.2079\%$ \\
  \bottomrule
  \end{tabular}
  \end{center}
\end{table}
\end{appendices}

\end{document}